\numberwithin{equation}{section}
\newtheorem{thm}{Theorem}[section]
\newtheorem{cor}[thm]{Corollary}
\newtheorem{lem}[thm]{Lemma}
\newtheorem{prop}[thm]{Proposition}
\theoremstyle{definition}
\newtheorem{dfn}[thm]{Definition}
\newtheorem{rmk}[thm]{Remark}
\newcommand{\N}{\mathds{N}}
\newcommand{\Z}{\mathds{Z}}
\newcommand{\R}{\mathds{R}}
\newcommand{\C}{\mathds{C}}
\newcommand{\BB}{\mathcal B}
\newcommand{\BBB}{{\mathcal B'}}
\newcommand{\BBBB}{{\mathcal B''}}
\newcommand{\T}{\mathds{T}}
\newcommand{\diff}{\mathrm{d}}
\newcommand{\ccc}{\mathrm c}
\newcommand{\M}{\mathcal{M}}
\newcommand{\U}{\mathcal{U}}
\newcommand{\V}{\mathcal{V}}
\newcommand{\A}{\mathbb{A}}
\newcommand{\q}{{\mathbf{q}}}
\newcommand{\p}{\mathbf{p}}
\newcommand{\hh}{\mathbf{h}}
\newcommand{\kk}{\mathbf{k}}
\newcommand{\HH}{\mathbb H}
\newcommand{\ver}{{\mathrm{v}}}
\newcommand{\hor}{{\mathrm{h}}}
\begin{document}

\title[Morsex homology for the Hamiltonian action in cotangent bundles]{Morse homology for the Hamiltonian action in cotangent bundles}

\author[L. Asselle]{Luca Asselle}
\address{Ruhr-Universit\"at Bochum, Universit\"atsstra\ss e 150, 44801, Bochum, Germany}
\email{luca.asselle@math.uni-giessen.de}

\author[M. Starostka]{Maciej Starostka}
\address{Gda\'nks University of Technology, Gabriela Narutowicza 11/12, 80233 Gda\'nsk, Poland
\newline
\indent Institut f\"ur Mathematik, Naturwissenschaftliche Fakult\"at II, Martin-Luther-Universit\"at Halle-Wittenberg, \newline 
\indent 06099 Halle (Saale), Germany}
\email{maciejstarostka@pg.edu.pl}

\date{\today}
\subjclass[2000]{37J45.}
\keywords{Hamiltonian action functional, Morse homology.}

\begin{abstract}
In this paper we use the gradient flow equation introduced in \cite{Asselle:2020b} 
to construct a Morse complex for the Hamiltonian action $\A_H$ on a mixed regularity space of loops in the cotangent bundle $T^*M$ of a closed manifold $M$. 
Connections between pairs of critical points are realized as genuine intersections between unstable and stable manifolds, which (despite being infinite dimensional objects) 
turn out to have finite dimensional intersection with good compactness properties. This follows from the existence of an additional structure, namely 
a strongly integrable (0)-essential subbundle, which behaves nicely under the negative gradient flow of the Hamiltonian action and which is needed to make comparisons. Transversality is achieved by generically perturbing 
the negative gradient vector field $-\nabla \A_H$ of the Hamiltonian action within a class of pseudo-gradient vector fields preserving all good compactness properties of $-\nabla \A_H$. 
This follows from an abstract 
transversality result of independent interest for vector fields on a Hilbert manifold for which stable and unstable manifolds of rest points are infinite dimensional. The 
resulting Morse homology is independent of the choice of the Hamiltonian (and of all other choices but the choice of the (0)-essential subbundle, which however only changes the Morse-complex by a shift of the indices)
and is
isomorphic to the Floer homology of $T^*M$ as well as to the singular homology of the free loop space of $M$. 

\tableofcontents
\end{abstract}

\maketitle

\vspace{-15mm}

\section{Introduction}

One of the central questions in the theory of Hamiltonian systems is the existence of (one-)periodic solutions $x:\T\to W$, $(W,\omega)$ symplectic manifold, to Hamilton's equation
\begin{equation}
\dot x(t) = X_H(x(t)),
\label{Hintro}
\end{equation}
where $X_H$ is the Hamiltonian vector field associated with a smooth time-dependent one-periodic Hamiltonian function $H:\T\times W\to \R$ via
$$\imath_{X_H} \omega = - \diff H.$$
Such periodic solutions can be characterized (at least at a formal level) as critical points of the \textit{Hamiltonian action functional}, which in the case of an exact symplectic form $\omega = \diff \lambda$ takes the form 
$$\A_H (x) = \int_0^1 x^* \lambda - \int_0^1 H(t,x(t))\, \diff t.$$
Unfortunately, one cannot infer the existence of critical points of $\A_H$ using classical variational methods such as Morse theory (and its infinite dimensional version due to Palais \cite{Palais:1963jp}), 
since every critical point of $\A_H$ has 
infinite Morse index and co-index.  In fact, for a long time people believed that no variational methods could be successfully applied to the functional $\A_H$. The first breakthrough in this direction is due to Rabinowitz: in  \cite{Rabinowitz:1978ts} he observed that the Hamiltonian action in $(\R^{2n},\omega_{\text{std}} = \diff p \wedge \diff q)$ is of the form
$$\A_H(x) = \frac {1}{2}\int_0^1 \langle -J\dot x,x\rangle \, \diff t - \int_0^1 H(t,x(t))\, \diff t,$$
where the first term 
defines a continuous quadratic form on $H^{1/2}(\T,\R^{2n})$, the space of Sobolev loops in $\R^{2n}$ of Sobolev regularity $1/2$, and the Hamiltonian term 
has compact $H^{1/2}$-gradient. This enabled him to prove what is nowadays universally known as the Weinstein conjecture, in the case of smooth compact connected 
hypersurfaces in $\R^{2n}$ bounding a compact and strictly convex region. By using finite dimensional approximations, the $H^{1/2}$-approach has been then successfully implemented 
by Conley and Zehnder in \cite{Conley:1983nr} to prove the Arnold conjecture on $(\T^{2n},\omega_{\text{std}})$. The case of the torus $\T^{2n}$ is somehow special (even though the $H^{1/2}$-approach can 
be used also for other symplectic manifolds such as $\C\mathbb P^n$, see \cite{For:85}): since $\T^{2n}$ is a quotient of $\R^{2n}$, the space of contractible $H^{1/2}$-loops on $\T^{2n}$ can be identified with $\T^{2n}$ times 
the Hilbert space of $H^{1/2}$-loops in $\R^{2n}$ having zero mean. This does not work for general symplectic manifolds, in particular for cotangent bundles $T^*M$ over a closed manifold $M$ 
equipped with the standard symplectic form 
$\omega_{\text{std}}=\diff \lambda$, and indeed the space of loops of class $H^{1/2}$ in $T^*M$ does not have a good structure of an infinite dimensional manifold, due to the fact that curves of class $H^{1/2}$ might have discontinuities. 

A brilliant idea on how to overcome such a difficulty came few years after the work of Conley and Zehnder with Floer (see \cite{Floer:1988oq} and its further extensions \cite{Floer:1989xr,HS:95,LT:98,FO:99}) and it is not limited to cotangent bundles but works for arbitrary symplectic manifolds which (if non-compact) are suitably convex at infinity: replacing the $H^{1/2}$-gradient of $\A_H$ with the $L^2$-gradient yields a gradient flow equation only formally, but if one interprets such a ``gradient flow equation'' as a PDE, this turns out to be a non-linear perturbed Cauchy-Riemann equation. This allowed Floer to use holomorphic curves techniques as developed by Gromov in \cite{Gromov:1985rv} to define a chain complex for $\A_H$, which is generated by contractible one-periodic solutions of~\eqref{Hintro}, by counting the number of solutions to the perturbed Cauchy-Riemann PDE which are asymptotic to pairs of periodic orbits whose Conley-Zehnder indices differ by one. The resulting homology is called \textit{Floer homology}, and its importance goes way beyond Floer's original motivation of proving the Arnold conjecture on the number of fixed points of non-degenerate Hamiltonian diffeomorphisms of closed symplectic manifolds: Lagrangian intersection Floer theory (see \cite{Fukaya}), symplectic homology (see \cite{FH94,CFH95,Viterbo:1999dp}), contact homology (see \cite{EGH00}) and Rabinowitz-Floer homology (see \cite{Cieliebak:2009va}) are just some of the famous derivations of Floer's seminal ideas (not to mention the ones in low dimensional topology).
Another ingenious approach for the Hamiltonian action in cotangent bundles, based on finite dimensional approximations and spectra, can be found in the work of Kragh \cite{Kragh:2012,Kragh:2013} and have led to a proof
of a homotopy version of the nearby Lagrangian conjecture.

Despite the great success of the theory of Floer, the question remained how far the original approach by Rabinowitz, Conley, and Zehnder can be generalized to manifolds different from the torus. In \cite{Asselle:2020b} we started 
addressing this question by showing that, for a Hamiltonian $H:\T\times T^*M \to \R$ with quadratic growth at infinity, the Hamiltonian action 
\begin{equation}
\A_H:\M^{1-s}\to \R, \quad s\in (1/2,1),
\label{AHintro}
\end{equation}
satisfies the Palais-Smale condition, $\M^{1-s}$ being the Hilbert bundle over the Hilbert manifold of loops $H^s(\T,M)$, $s\in (1/2,1)$, whose typical fibre is given by the space of $H^{1-s}$-sections of the pull-back bundle $\ccc^*(T^*M)$, where
$\ccc:\T\to M$ is any smooth loop. Roughly speaking, instead of considering loops with Sobolev regularity $1/2$, one considers loops in $T^*M$ whose projection to the base has regularity larger than $1/2$ (but strictly less than $1$), thus making them continuous in the base direction, and whose projection to the fiber has regularity smaller than $1/2$ (but strictly larger than $L^2$-regularity) in such a way that the mean regularity is $1/2$. The key observation is that the regularity loss in the fiber direction does not pose any major difficulty
because of the linear structure of the fibers.

\begin{rmk}
The case $s=1/2$ in~\eqref{AHintro} would correspond to the $H^{1/2}$-approach of Rabinowitz, Conley, and Zehnder. However, as we have already observed, this cannot be used in the case of cotangent bundles since $H^{1/2}(\T,M)$ is not a Hilbert manifold. Notice also that the Hamiltonian action~\eqref{AHintro} is actually well-defined also for $s=1$. Such a setting is used by Hofer and Viterbo in \cite{Hofer:1988} to prove the Weinstein conjecture for a class of compact hypersurfaces in cotangent bundles. However, for $s=1$ the Hamiltonian term does not have compact gradient, and this reflects into the fact that $\A_H$ does not satisfies the Palais-Smale condition. This forces to introduce approximations of $\A_H$ to achieve compactness and then pass to the limit for the approximation going to zero using a very delicate diagonal argument.  \qed 
\end{rmk}

In this paper we upgrade the results in \cite{Asselle:2020b} by showing that one can obtain a genuine Morse complex for $\A_H$ using the Morse complex approach which is developed in \cite{AM:01,AM:03,AM:05,AM:09}. 
In this approach, one constructs a chain complex by looking at the one-dimensional intersections of unstable and stable manifolds of pairs of critical points. 
Despite being infinite dimensional, unstable and stable manifolds of pairs of critical points always intersect in finite dimensional objects with good compactness properties; see Section 4. This is the case because of the existence of an additional structure, namely a (0)-essential subbundle of the tangent bundle $T\M^{1-s}$, which behaves well under the negative gradient flow of $\A_H$ (more precisely, under a suitable negative pseudo gradient flow). Such an additional structure is needed to make comparisons, in particular to define a notion of relative Morse index for critical points of $\A_H$, and allows to recover a structure which is described in \cite{Cohen} in terms of polarizations. 
Its construction will be performed in Section 3. 
If stable and unstable manifolds of critical points intersect transversally (a condition which can be achieved by a generic perturbation of the negative pseudo gradient vector field preserving all its good compactness properties; see Section 5 for further details), then the intersection is a finite dimensional manifold with dimension equal to the difference of the relative Morse indices. 

In the theorem below the assumption $s\in (1/2,3/4)$ is needed to guarantee that the Hamiltonian action $\A_H$ is sufficiently regular to apply the abstract transversality theorem~\ref{thm:morsesmalegeneral}.

\begin{thm}
\label{thm:main}
Let $M$ be a closed manifold, and let $H:\T\times T^*M \to \R$ be a smooth Hamiltonian which is fiberwise convex and quadratic outside a compact set, see~\eqref{eq:growthcondition}. Then, for every $s\in (1/2,3/4)$, there is a well-defined Morse complex with $\Z_2$-coefficients for the Hamiltonian action $\A_H:\M^{1-s}\to \R$. The induced Morse homology does not depend on the Hamiltonian, and is isomorphic to the singular homology of the free loop space of $M$ as well as to the Floer homology of $T^*M$.  
\end{thm}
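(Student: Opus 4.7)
The plan is to assemble the Morse complex along the standard Abbondandolo–Majer template, using the ingredients announced in the introduction: the Palais–Smale property for $\A_H$ on $\M^{1-s}$ established in \cite{Asselle:2020b}, the (0)-essential subbundle of $T\M^{1-s}$ constructed in Section 3 (which will play the role of a polarization and yield a well-defined relative Morse index $\ind(x,y)\in\Z$ for any pair of critical points $x,y$), the compactness results of Section 4, and the abstract transversality statement Theorem~\ref{thm:morsesmalegeneral}. First I would assume $H$ is chosen so that all one-periodic orbits are non-degenerate (a generic condition on $H$). Then the chain groups $C_k(\A_H)$ are freely generated over $\Z_2$ by critical points with a fixed relative index $k$, and boundedness of the number of orbits in each index comes from Palais–Smale together with the fact that the relative index changes continuously along the flow of the essential subbundle.

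Next I would define the boundary operator. For critical points $x,y$ let $\M(x,y):=W^u(x)\cap W^s(y)$, where $W^u, W^s$ are unstable and stable manifolds of a pseudo-gradient $X$ which (i) agrees with $-\nabla \A_H$ outside a small neighbourhood of the rest points and (ii) lies in the class of pseudo-gradients from Section 5 that preserve the good compactness properties and the behaviour with respect to the essential subbundle. By Theorem~\ref{thm:morsesmalegeneral}, a generic choice of $X$ makes all such intersections transverse, so $\M(x,y)$ is a finite-dimensional manifold of dimension $\ind(x,y)$. When $\ind(x,y)=1$, the quotient $\M(x,y)/\R$ is compact (no breaking can occur at a single intermediate orbit because there is no room in the index, and escape to infinity is excluded by Palais–Smale and the compactness results of Section~4). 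Set
\[
\partial x \;=\; \sum_{\ind(x,y)=1} \#_{\Z_2}\bigl(\M(x,y)/\R\bigr)\cdot y.
\]
To prove $\partial^2=0$ one compactifies the two-dimensional components $\M(x,z)/\R$, $\ind(x,z)=2$, to one-dimensional manifolds with boundary whose ends correspond to once-broken trajectories; the gluing construction needed here is the standard one once compactness and transversality are in place, and the infinite-dimensionality of $W^u, W^s$ is absorbed by the essential subbundle since the normal directions along which gluing is performed lie in the finite-dimensional part where the two manifolds intersect transversally. The main obstacle, and the one where the essential subbundle does its real work, is exactly ensuring that this gluing/compactness dichotomy closes up despite $W^u(x)$ and $W^s(y)$ being infinite-dimensional manifolds.

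For invariance of $HM_*(\A_H)$ under the choices of $H$ and of the pseudo-gradient $X$ I would use a standard continuation argument: given $(H_0,X_0)$ and $(H_1,X_1)$ satisfying the hypotheses, interpolate by a family $(H_\tau,X_\tau)$, $\tau\in[0,1]$, and count non-stationary $(-X_\tau)$-trajectories asymptotic to pairs $(x,y)$ with $\ind_0(x)=\ind_1(y)$. The same Palais–Smale plus essential-subbundle machinery gives the required compactness; the resulting chain map is a quasi-isomorphism by the usual homotopy-of-homotopies argument. Changing the essential subbundle only shifts the grading, exactly as explained in the abstract. Independence from $s\in(1/2,3/4)$ can be seen either by a direct continuation argument (moving $s$) or by the isomorphism with Floer homology established below.

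Finally, for the two isomorphisms I would proceed in two steps. Isomorphism with the singular homology of $\Lambda M=H^1(\T,M)$ is obtained by comparing, for a Tonelli Hamiltonian $H$ fiberwise Legendre-dual to a Lagrangian $L$, the Morse complex of $\A_H$ on $\M^{1-s}$ with the classical Morse complex of the Lagrangian action $\mathbb{S}_L$ on $H^1(\T,M)$ built in \cite{AM:01,AM:05}. The Legendre transform sends critical points to critical points bijectively and preserves the relative index up to the shift coming from the choice of essential subbundle; a chain-level comparison, using that the Legendre diffeomorphism intertwines (up to homotopy of pseudo-gradients) the two negative gradient flows on the common set of critical points, produces an isomorphism of Morse homologies, and the Morse homology of $\mathbb{S}_L$ is isomorphic to $H_*(\Lambda M;\Z_2)$ by \cite{AM:05}. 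Isomorphism with $HF_*(T^*M)$ then follows from Abbondandolo–Schwarz \cite{AM:05} (which establishes $HF_*(T^*M)\cong H_*(\Lambda M;\Z_2)$), or alternatively can be obtained directly via a hybrid moduli space counting half-cylinders solving the Floer equation on one side and the negative gradient equation of $\A_H$ on the other, since both flows are gradient-like for $\A_H$ on the common set of critical points. The truly delicate piece throughout is the interplay of the essential subbundle with the pseudo-gradient flow needed for compactness of the one-dimensional moduli spaces; everything else is an adaptation of the by-now standard Abbondandolo–Majer machinery to the mixed-regularity setting of $\M^{1-s}$.
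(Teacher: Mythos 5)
Your skeleton is the right one and matches the paper's (chain groups graded by the relative index of Definition~\ref{def:morseindex}, boundary operator given by the mod--$2$ count of one--dimensional transverse intersections of stable and unstable manifolds of a pseudo--gradient from the class of Theorem~\ref{thm:transverseintersection}, finiteness of the sums coming from the uniform lower bound on the critical action in Theorem~\ref{lem:compactness} rather than from any ``continuity of the index along the flow''). The first genuine gap is in your proof of $\partial^2=0$: you invoke a Floer--style gluing/compactification of the two--dimensional moduli spaces and assert that it is ``standard once compactness and transversality are in place'', with the infinite dimensionality ``absorbed by the essential subbundle''. That is not an argument in this setting: the intersections $W^u(x;P)\cap W^s(y;P)$ are not cut out as zero sets of a Fredholm section, so there is no implicit--function--theorem gluing to quote, and the essential subbundle by itself does not produce the boundary structure. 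The paper replaces gluing by the dynamical results of \cite{AM:05}, reproduced here as Propositions~\ref{prop:productshift} and~\ref{prop:productshift2}: a two--dimensional component with non--compact quotient is semi--conjugated to a product of shift flows, so its closure contains exactly two distinct once--broken lines, and conversely every concatenation $W_1\cup W_2$ lies in the closure of a unique two--dimensional component; broken lines therefore come in pairs and $\partial^2=0$ over $\Z_2$ (Theorem~\ref{thm:boundaryoperator}). Your route could in principle be repaired along these lines, but as written the key step is missing.

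The second set of gaps concerns invariance and the loop--space isomorphism. Your continuation via a non--autonomous family $(H_\tau,X_\tau)$ would require compactness and transversality for a non--autonomous flow, whereas all of the paper's compactness machinery (positive invariance of the essentially vertical family, Theorem~6.5 of \cite{AM:05}, Theorem~\ref{thm:precompactnessforsequences}) is formulated for autonomous vector fields; the paper sidesteps this by the cone construction on $\R\times\M^{1-s}$ (Theorem~\ref{thm:functoriality2}), which reduces continuation to the autonomous theory, and it compares arbitrary Hamiltonians only through monotone maps plus the sandwich $H_0+c_-\le H_1\le H_0+c_+$ --- monotonicity is exactly what makes the cone function a Lyapounov function, a point your sketch does not address. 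Finally, for the isomorphism with $H_*(\Lambda M;\Z_2)$ your mechanism --- ``the Legendre diffeomorphism intertwines (up to homotopy of pseudo--gradients) the two negative gradient flows'' --- does not exist: $\mathcal S$ lives on $H^1(\T,M)$ and $\A_H$ on $\M^{1-s}$, the critical points have finite versus infinite Morse index, and no map of these spaces conjugates the flows. The correct comparison (which the paper sketches, and which you mention only on the Floer side) is an adaptation of the Abbondandolo--Schwarz chain--level construction \cite{Abbondandolo:2006jf}, coupling trajectories of the two functionals and exploiting the Legendre correspondence of critical points and indices; the identification with $HF_*(T^*M)$ is then quoted from \cite{Abbondandolo:2006jf}, as you also propose.
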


In this paper we use $\Z_2$-coefficients instead of $\Z$-coefficients for the sake of simplicity only. Indeed, the (0)-essential subbundle constructed in Section 3 can be given an orientation (in a sense which we do not bother to specify here) which allows us to define coherent orientations for the intersection between stable and unstable manifolds, and hence to construct a Morse complex with $\Z$-coefficients. It will be highly interesting to see to what extent
the orientation of the (0)-essential subbundle enters the construction of the isomorphism between the Morse homology and the singular homology of the free loopspace. Recall indeed that  the use of a twisted version of the Floer complex is required when constructing the isomorphism between 
the Floer homology of $T^*M$ and the singular homology of the free loop space in case $M$ is not spin (more precisely, in case the second Stiefel-Whitney class of $M$ does not vanish on 2-tori, see \cite{Abbondandolo:2006jf,AS:14b}). In case of $\Z_2$-coefficients, a sketch of the construction of the isomorphism between the Morse homology and the the singular homology of the free loopspace is provided in Section 7. A direct comparison 
between the Morse complex and the Floer complex should also be possible by generalizing to cotangent bundles the techniques introduced by Hecht \cite{Hecht} in the case of Hamiltonian systems on tori. 

Despite yielding isomorphic homologies, the Morse complex approach has many advantages over Floer's approach which we shall now briefly describe. First, in the Morse complex approach transversality is achieved in a much 
more elementary way (namely, by generically perturbing the negative pseudo-gradient vector field) and this might be an advantage in more complicated situations where transversality is hard to achieve resp. 
cannot be achieved in Floer theory. Second, the fact that the Morse complex is constructed by intersecting genuine geometric objects (namely, stable and unstable manifolds of pairs of critical points) might suggest the possibility
of applying homotopy arguments rather than merely homology arguments. In this direction, it would be interesting to see to what extent the results of Kragh \cite{Kragh:2012,Kragh:2013} can be recovered using the Morse complex approach. All of this goes the direction of developing methods alternative to Floer theory which are more topological in nature, with e.g. the concrete motivation of improving the known results about the degenerate Arnold conjecture on toric manifolds (in this respect, in the recent preprint \cite{Asselle:2022} in collaboration with Izydorek we give an alternative, purely Conley index based proof of the Arnold conjecture in $\C\mathbb P^n$).

We shall finally notice that, in more linear settings, mixed regularity spaces of loops are used in much more general contexts than the one considered here (for instance, assuming that the regularity loss occur only in
some specific directions). This suggests the possibility that the Morse complex approach can be successfully used for broader classes of symplectic manifolds  (e.g. symplectizations of contact manifolds). 
This is subject of ongoing research. 

We finish this introduction with a brief summary of the content of the paper: 
\begin{itemize}
\item In Section 2, we introduce all notions needed throughout the paper, the functional setting for $\A_H$, and provide 
some preliminary computations showing e.g. the compactness of certain commutator operators on fractional Sobolev spaces with super-critical exponent. 
\item In Section 3 we construct the additional structure needed to make comparisons and define the relative Morse index of critical points of $\A_H$. 
\item In Section 4 we show that such an additional structure behaves well under a suitable negative pseudo-gradient flow of $\A_H$, thus enabling us to prove that the intersection between stable and unstable manifolds of critical points of $\A_H$ is pre-compact. 
\item In Section 5 we prove an abstract transversality result for vector fields on a Hilbert manifold for which the stable and unstable manifolds of rest points are infinite dimensional, and then apply it to show that after a generic perturbation of the negative pseudo-gradient vector field of $\A_H$ we can assume that the stable and unstable manifolds of critical points of $\A_H$ whose relative Morse indices differ at most by 2 intersect transversally. 
\item In Section 6 we employ the content of Sections 2-6 to construct the Morse complex for $\A_H$. 
\item In Section 7 we finally discuss the functioriality properties of the Morse homology.
\end{itemize}


\vspace{2mm}

\textbf{Acknowledgments.} We are indebted with Alberto Abbondandolo for the countless discussions about the content of this paper.
We also thank him and Pietro Majer for providing us a set of unpublished notes about the construction of the Morse complex for abstract strongly indefinite functionals addressing in particular the questions about transversality 
and functoriality.  
This research is supported by the DFG-project 380257369 ``Morse theoretical methods in Hamiltonian dynamics''.
M.S. is partially supported by the Beethoven2-grant  2016/23/G/ST1/04081 of the National Science Centre, Poland.


\section{Preliminaries}
\label{section:2}


\subsection{The functional setting} In this subsection, we introduce the functional setting for the Hamiltonian action $\A_H$. Thus, let $M$ be a closed (i.e. compact without boundary) manifold. To avoid extra complications 
we further assume that $M$ be orientable. 


Referring to \cite{Asselle:2020b} for the details, we see that, for $s>\frac 12$, the fractional Sobolev space $H^s(\T,M)$, $\T:=\R/\Z$, of $H^s$-loops in 
$M$ has a natural structure of Hilbert manifold, and for any $r\in [-s,s]$ (see also Lemma~\ref{prop:multiplicationSobolev}) there exists a vector bundle 
$$\pi_r:\mathcal M^{r} \to H^s(\T,M)$$
over $H^s(\T,M)$, whose typical fiber is given by ``$H^r$-sections'' of the pull-back bundle $\ccc^*(T^*M)$, where $\ccc:\T\to M$ is a smooth loop. Local parametrizations for $M^r$ can be given as follows:
for any open subset $U\subset \R^n$ and any smooth diffeomorphism 
$$\T\times U \to \T\times M, \qquad (t,q)\mapsto (t,\varphi(t,q))$$
we have an associated bijection 
$$\varphi_* : H^s(\T,U)\times H^r (\T, (\R^n)^*) \to \M^r, \qquad (\q,\p)\mapsto \big (\varphi(\cdot,\q(\cdot)), \diff \varphi(\cdot, \q(\cdot))^{-*} \p \big ).$$
The collection of the maps $\varphi_*$, for $\varphi$ varying in the above class, form a collection of trivializing maps inducing the structure of a smooth Hilbert vector bundle on $\pi_r:\M^r\to H^s(\T,M)$. 

%

The choice of a Riemannian metric $g$ on $M$ induces in a natural way 
a Riemannian metric $\langle \cdot,\cdot \rangle_r$ for the bundle $\M^r$, for every $r$.  
The norm induced by the $L^2$-metric will be denoted by $\|\cdot\|$. Similarly, we denote by 
$\|\cdot \|_\infty$ the induced $L^\infty$-norm. 

For $s\in (\frac 12,1)$ we consider the Hilbert-bundle $\pi_{1-s}:\mathcal M^{1-s}\to H^s(\T,M)$. 
Given a smooth time-depending Hamiltonian function $H:\T\times T^*M\to \R$ such that 
\begin{equation}
H(t,q,p) = \frac 12 |p-\theta_q|_q^2 + U(t,q), \quad \forall t\in \T,
\label{eq:growthcondition}
\end{equation}
outside a compact set $K\subset T^*M$, where $\theta\in \Omega^1(M)$ is a (possibly time-depending) one-form on $M$ (a so called \textit{magnetic potential}) and $U:\T\times M\to \R$ is a smooth time-depending potential, we can define the Hamiltonian action functional by
\begin{align}
\A_H : \mathcal M^{1-s} \to \R,\quad \A_H(\q,\p) &= \int_0^1 (\q,\p)^*\lambda - \mathcal H(\q,\p), \label{AH}
								\end{align}
where $\lambda$ denotes the Liouville one-form on $T^*M$ and
$$\mathcal H:\M^{1-s}\to \R,\quad \mathcal H(\q,\p):= \int_0^1 H(t,\q(t),\p(t))\, \diff t.$$

Throughout this paper we will assume without further mentioning it that all critical points of $\A_H$ are \textit{hyperbolic}, which in this case is equivalent to the fact 
that the second differential of $\A_H$ at each critical point seen as a symmetric bounded 
bilinear form is non-degenerate. 

In the theorem below we summarize the properties of the functional $\A_H$, referring to \cite[Section 2]{Asselle:2020b} for the proof. We shall notice that 
the Palais-Smale condition is proved in \cite{Asselle:2020b} only for Hamiltonians which are kinetic (up to a constant) outside a compact set, 
that is only for $\theta\equiv 0$ and $U\equiv \text{c}$, however the proof extends verbatim to the case in which $\theta$ is an arbitrary one-form on $M$ and $U$ is a time-depending potential. 

We recall that we can define a natural Riemannian metric on $\mathcal M^{1-s}$ by 
\begin{equation}
\langle \cdot,\cdot\rangle_{\M^{1-s}}:= \langle \cdot^\hor ,\cdot ^\hor\rangle_s + \langle \cdot^\ver,\cdot^\ver\rangle_{1-s},
\label{metricms}
\end{equation}
where the splitting $T \M^{1-s}  \cong H \M^{1-s}\oplus V \M^{1-s}$ into horizontal and vertical subbundle is induced by the $L^2$-connection (roughly speaking, the Levi-Civita connection of $g$ applied pointwise).

A sequence $(\q_n,\p_n)\subset \mathcal M^{1-s}$ is called a \textit{Palais-Smale sequence} for $\A_H$ if $\A_H(\q_n,\p_n)\to a$ for some $a\in \R$ and $\|\diff \A_H(\q_n,\p_n)\|\to 0$. Here, with slight abuse of notation, we denote with $\|\cdot\|$ the dual norm on $T^*_{(\q_n,\p_n)}\mathcal M^{1-s}$ induced by the Riemannian metric 
$\langle \cdot,\cdot\rangle_{\mathcal M^{1-s}}$ in \eqref{metricms}. 

\begin{thm}
\label{lem:compactness}
For every $s\in (\frac 12, 1)$, the following statements hold:
\begin{enumerate}
\item $\A_H$ is well-defined over $\mathcal M^{1-s}$ and at least of class $C^{1,1}$. More precisely, there exists $k=k(s)\in \N$ such that 
$\A_H:\M^{1-s}\to \R$ is of class $C^k$, with $k(s)\to +\infty$ 
as $s\downarrow \frac 12$.
\item The operator $\diff \mathcal H$ is compact. 
\item  Critical points of $\A_H$ correspond to one-periodic solutions of Hamilton's Equation.
\item $\A_H$ satisfies the Palais-Smale condition.
\item The action of critical points of $\A_H$ is uniformly bounded from below. 
\end{enumerate}
\end{thm}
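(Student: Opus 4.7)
The plan is to exploit systematically the asymmetric Sobolev regularity built into $\M^{1-s}$: since $s>\frac 12$, the base component $\q$ is continuous so that composition with the smooth Hamiltonian $H$ makes sense pointwise, while the loss of fractional regularity in the fibre is compensated by the fact that $\int_0^1\<\p(t),\dot\q(t)\>\,\diff t$ is nothing but the continuous duality pairing $H^{1-s}\times H^{s-1}\to\R$. Combined with the Sobolev multiplication and composition theorems for fractional orders applied to polynomially growing nonlinearities, these two observations yield (1) and (2) essentially by unwinding the local trivialisations $\varphi_*$: the symplectic piece $\int(\q,\p)^*\lambda$ is a continuous bounded symmetric bilinear form, the Hamiltonian term $\mathcal H$ is of class $C^k$ with $k=k(s)\to\infty$ as $s\downarrow\frac 12$, and $\diff \mathcal H$ has the form $(\partial_q H,\partial_p H)\circ(\q,\p)$, which factors through the compact Rellich embeddings $H^s\hookrightarrow H^{s-\e}$ and $H^{1-s}\hookrightarrow H^{1-s-\e}$, yielding compactness of the Hamiltonian differential.

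For (3), testing $\diff \A_H(\q,\p)=0$ against horizontal and vertical variations yields, in weak form, the Hamilton system
\begin{equation*}
\dot\q(t)=\partial_p H(t,\q,\p),\qquad \dot\p(t)=-\partial_q H(t,\q,\p),
\end{equation*}
and a standard bootstrap using the smoothness of $H$ and the already established $C^0$-regularity of $\q$ promotes $(\q,\p)$ to a smooth one-periodic Hamiltonian orbit.

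The main obstacle is (4), because the symplectic quadratic form underlying $\int(\q,\p)^*\lambda$ is strongly indefinite. Given a PS sequence $(\q_n,\p_n)$, the strategy is to write $\nabla\A_H=L-\nabla\mathcal H$, where $L$ is the bounded selfadjoint operator on $\M^{1-s}$ associated via the Riemannian metric \eqref{metricms} with the symplectic bilinear form, and $\nabla\mathcal H$ is compact by (2). One first uses the growth condition \eqref{eq:growthcondition} together with the action bound $|\A_H(\q_n,\p_n)|\le C$ to relate $\|\p_n\|_{L^2}$ to $\|\dot\q_n\|_{L^2}$ via the Fenchel dual inequality; the PS condition tested against horizontal variations then yields an $H^s$-bound on $\q_n$, while testing against vertical variations, combined with fibrewise strong convexity at infinity, upgrades the fibre component to an $H^{1-s}$-bound. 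Once the sequence is bounded in $\M^{1-s}$, extracting a weakly convergent subsequence and exploiting that $L$ is Fredholm modulo a finite-dimensional kernel while $\nabla\mathcal H$ is compact promotes the convergence to strong convergence. The delicate point is to verify that the $\p$-component of $\nabla\A_H$ really controls the full $H^{1-s}$-norm and not merely a weaker norm, which is where the mixed regularity choice $s\in(\frac 12,1)$ becomes crucial.

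Finally, for (5), every critical point is by (3) a smooth one-periodic Hamiltonian loop; using fibrewise convexity and the Legendre transform one rewrites
\begin{equation*}
\A_H(\q,\p)=\int_0^1 L(t,\q(t),\dot\q(t))\,\diff t
\end{equation*}
with $L$ the Lagrangian dual to $H$. Since $H$ is fibrewise convex and quadratic outside a compact set, $L$ is smooth and bounded below by a uniform constant $-c_0$, whence so is the action of any critical point.
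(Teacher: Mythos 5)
Note first that the paper itself proves only item (5) here, deferring (1)--(4) to \cite{Asselle:2020b}; so the substantive comparison concerns (5), and there your route is genuinely different from the paper's. You pass to the Fenchel-dual Lagrangian and use Fenchel--Young \emph{equality} along a critical point (valid because $\dot\q=\partial_pH(t,\q,\p)$ and $H(t,q,\cdot)$ is convex), obtaining $\A_H(\q,\p)=\int_0^1 L(t,\q,\dot\q)\,\diff t\geq -\max_{\T\times M}H(t,q,0)$. This is correct, but it uses \emph{global} fiberwise convexity, which is a hypothesis of Theorem~\ref{thm:main} and not of the standing growth assumption~\eqref{eq:growthcondition} under which Theorem~\ref{lem:compactness} is stated; also, your incidental claim that $L$ is smooth is neither needed nor automatic for a merely convex $H$. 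The paper's proof avoids duality altogether: it writes $\delta:=H-\tfrac12|p-\theta_q|_q^2-U$, which is compactly supported, evaluates $\A_H(\q,\p)-\diff\A_H(\q,\p)[0,\p]$ at a critical point, and gets $\A_H(\q,\p)\geq \tfrac12\|\p\|^2-c(\|\p\|+3)\geq -\tfrac12 c^2-3c$. That argument needs no convexity and gives, as a by-product, quantitative $L^2$-control of $\p$ at critical points; your argument is shorter and more conceptual but strictly less general.

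Concerning (1)--(4), your sketches go in the same direction as the cited reference for (1)--(3), but (4) has a genuine gap. You yourself isolate ``the delicate point'' --- that the vertical component of $\diff\A_H$ controls the full $H^{1-s}$-norm of the fibre part --- and then do not resolve it; this is exactly the nontrivial content of the Palais--Smale proof in \cite{Asselle:2020b}. Moreover, the framing $\nabla\A_H=L-\nabla\mathcal H$ with ``$L$ a bounded self-adjoint operator on $\M^{1-s}$'' is not literally available: $\M^{1-s}$ is a nonlinear bundle over $H^s(\T,M)$, the Liouville term is not a quadratic form on a fixed Hilbert space, and the operator $L$ of~\eqref{Lflat} exists only chart-wise; in addition the metric~\eqref{metricms} is only locally (not globally) equivalent to the flat chart metrics, so the ``Fredholm plus compact perturbation implies strong convergence'' step must be run in local charts after first establishing boundedness and localization of the Palais--Smale sequence. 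As written, (4) is a plan rather than a proof; either carry out this step or, as the paper does, cite \cite{Asselle:2020b} for it explicitly.
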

\begin{proof}
Statements (1)-(4) are proved in \cite[Section 2]{Asselle:2020b}. To prove (5) we observe that by assumption 
$$\delta :\T \times T^*M \to \R, \quad \delta(t,q,p) := H(t,q,p) - \frac 12 |p-\theta_q|_q^2 - U(t,q),$$
is a smooth compactly supported function and set 
$$c:= \max \Big \{ \|\delta\|_\infty, \|\partial_p \delta\|_\infty, \|U\|_\infty,\|\theta\|_\infty\Big \}.$$
We also notice that, setting $\langle \dot \q,\p\rangle := \int_0^1 (\q,\p)^*\lambda$, we have 
$$\A_H(\q,\p) = \langle \dot \q,\p\rangle - \frac 12 \|\p-\theta_\q\|^2 - \int_0^1 U(t,\q(t))\, \diff t - \int_0^1 \delta(t,\q(t),\p(t))\, \diff t$$ 
and hence 
$$\diff \A_H (\q,\p) [0,\p] = \langle \dot \q,\p\rangle - \langle \p-\theta_\q,\p\rangle -\int_0^1 \partial_p \delta (t,\q(t),\p(t))\cdot \p (t)\, \diff t.$$
Finally, we compute for $(\q,\p)\in$ crit$(\A_H)$
\begin{align*}
\A_H(\q,\p) &= \A_H (\q,\p) - \diff \A_H(\q,\p) [0,\p] \\
		&=  \frac 12 \|\p\|^2 - \frac 12 \|\theta_\q\|^2 - \int_0^1 U(t,\q(t))\, \diff t - \int_0^1 \delta (t,\q(t),\p(t))\, \diff t + \int_0^1 \partial_p \delta (t,\q(t),\p(t)) \cdot \p(t)\, \diff t\\
		&\geq \frac 12 \|\p\|^2 - c (\|\p\| +3)\\
		&\geq - \frac12 c^2 - 3c. \qedhere
\end{align*}
\end{proof}


\subsection{Stable and unstable manifolds}
In this subsection we recall the definition of stable resp. unstable manifold, 
referring to \cite{Shub:1987} or \cite[Appendix C]{AM:05} for further details. 
Let $\M$ be Hilbert manifold modeled on the infinite dimensional real separable Hilbert space $\HH$, and let $F$ be a $C^1$ vector field on $\M$. We denote by $\Phi^F:\Omega(F)\subset \R\times \M\to \M$ 
the local flow of $F$. If $x\in \M$ is an hyperbolic rest point of $F$, meaning that the spectrum of the Jacobian of $F$ at $x$ is disjoint from the imaginary axis, then we can define the 
 \textit{unstable} and \textit{stable manifolds} of $x$ as
\begin{align*}
W^u(x;F) &:= \Big \{p\in \M \ \Big |\ (-\infty,0]\times \{p\} \subset \Omega(F), \ \ \lim_{t\to -\infty} \phi^F(t,p) = x \Big \},\\
W^s(x;F) &:= \Big \{p\in \M \ \Big |\ [0,+\infty)\times \{p\} \subset \Omega(F), \ \ \lim_{t\to +\infty} \phi^F(t,p) = x \Big \}.
\end{align*}

The stable manifold theorem implies that
such sets are actually $C^1$-submanifolds of $\M$ with dimension given by the Morse index resp. co-index of $x$ whenever 
$F$ admits a global $C^1$ Lyapunov function which is twice differentiable and non-degenerate at $x$. 


\subsection{Essential subbundles} 
In this subsection we will recall the definition and some general properties of essential subbundles that will be useful later on, referring to \cite{AM:05} for further details. For our purposes, we will 
only need to consider essential subbundles of the tangent bundle of a Hilbert manifold $\M$, but everything can be extended verbatim to general Hilbert bundles over Banach manifolds. 

Thus, let $\M$ be a Hilbert manifold modeled on the infinite dimensional separable real Hilbert space $\HH$. The 
\textit{Grassmannian} of $\HH$ is the space 
$$\text{Gr}\, (\HH) := \big \{ V\subset \HH \ \big |\ V \text{ closed linear subspace}\big \}$$
of all closed linear subspaces of $\HH$, endowed with the operator norm topology. Since we will be only interested in subspaces with infinite dimension and codimension, all 
subspaces of $\HH$ appearing hereafter are supposed to be infinite dimensional and codimensional without further specifying it. Also, we will
denote the connected component $\text{Gr}_{\infty,\infty}(\HH)$ of all infinite dimensional and codimensional subspaces again with $\text{Gr}\, (\HH)$. 
 
Given $V,W\in \text{Gr}\, (\HH)$, we will say that $V$ is a \textit{compact perturbation} of $W$ it $P_V-P_W$ is a compact operator, where $P_V$, $P_W$ denotes the orthogonal projection onto $V$ resp. $W$. 
In this case, the \textit{relative dimension} of $V$ with respect to $W$ is the integer 
$$\dim (V,W) := \dim (V\cap W^\perp) - \dim (V^\perp \cap W).$$ 

For $m\in \N$, the \textit{(m)-essential Grassmannian} $\text{Gr}_{(m)}^* (\HH)$ is the quotient space of $\text{Gr}_{\infty,\infty} (\HH)$ by the equivalence relation 
$$V\sim_{(m)} W \quad \Leftrightarrow \quad V \ \text{is a compact perturbation of } W, \ \dim (V,W)\in m\Z.$$
The space $\text{Gr}_{(1)}(\HH)$ is called simply the \textit{essential Grassmannian} of $\HH$. 

The tangent bundle $T\M \to \M$ is a smooth Hilbert bundle with typical fibre $\HH$ and structure group $GL(\HH)$. Since $GL(\HH)$ is contractible (see \cite{Kuiper:1965}), the tangent bundle is always trivial. 
We define
\begin{align*}
\text{Gr}\, (T\M) &= \bigcup_{p\in \M} \text{Gr}\,(T_p\M)\to \M,\\ 
 \text{Gr}_{(m)} (T\M) & = \bigcup_{p\in \M} \text{Gr}_{(m)}(T_p\M)\to \M,\ \ m\in \N.
 \end{align*}
 It is not hard to see that the above bundles have smooth structures.
A smooth section of $\text{Gr}\,(T\M) \to \M$ is just a smooth subbundle of $T\M\to \M$. 
Similarly, a smooth section of $\text{Gr}_{(m)}(T\M)\to \M$ will be called an \textit{(m)-essential subbundle} of $T\M \to \M$, or simply an \textit{essential subbundle} if $m=1$. 

\begin{rmk}
\label{rmk:liftability}
It is natural to ask which conditions ensure that an $(m)$-essential subbundle, $m\in \N$, is liftable e.g. to a true subbundle or to a $(0)$-essential subbundle. Following \cite[Section 1.4]{AM:05} we see that 
every $(m)$-essential subbundle is liftable to a true subbundle when the homotopy groups of $\M$ vanish in all degrees congruent to $1,2,3,$ and $5$ mod $8$, and to a $(0)$-essential subbundle if $\M$ 
is simply connected. 

An equivalent definition of essential subbundles can be given as follows: Let $\{(\varphi_\alpha,\U_\alpha)\}_{\alpha\in \mathcal A}$ be an atlas of $\M$, and let $\mathcal E_\alpha\subset T\U_\alpha$ be a subbundle for 
every $\alpha\in \mathcal A$. The family $\mathcal E:=\{\mathcal E_\alpha\}$ is called an \textit{essential subbundle} of $T\M$, if 
$$\mathcal E_\alpha \Big |_{\U_\alpha\cap \U_\beta} \ \text{is a compact perturbation of } \mathcal E_{\beta} \Big |_{\U_\alpha\cap \U_\beta}, \quad \forall \alpha,\beta\in \mathcal A.$$
Such an essential subbundle $\mathcal E$ is an $(m)$-essential subbundle, $m\in \N$, if we additionally have 
\begin{equation*}
\hspace{5cm} \dim (\mathcal E_\alpha ,\mathcal E_\beta)=0 \quad \text{mod} \ m, \quad \forall \alpha,\beta\in \mathcal A. \hspace{5cm}\qed
\end{equation*}
\end{rmk}

An essential subbundle $\mathcal E$ of $T\M$ is called \textit{strongly integrable} if $\M$ admits an atlas $\{(\varphi_\alpha,\U_\alpha)\}$ such that each $\varphi_\alpha$ maps $\mathcal E$ into the essential subbundle of $T\HH$ 
represented by a constant closed linear subspace $V\subset \HH$
\begin{equation}
D\varphi_\alpha (p) \mathcal E(p) = [V], \quad \forall p \in \U_\alpha,\quad \forall \alpha,
\label{eq:integrable1}
\end{equation}
and for every $\alpha,\beta$ the transition map
$$\tau:= \varphi_\alpha\circ \varphi_\beta^{-1}:\text{dom} (\tau)= \varphi_\beta(\U_\alpha\cap \U_\beta)\subset \HH \to \HH$$
satisfies the following property: for every bounded $A\subset \text{dom}(\tau)$, 
\begin{equation}
QA \text{ is pre-compact if and only if } Q\tau(A) \ \text{is pre-compact},
\label{eq:integrable2}
\end{equation}
where $Q$ denotes a projector with kernel $V$. We shall observe that the definition above does not depend on the choice of the projector $Q$, but only on the subspace $V$. Indeed, the set $QA$ is pre-compact if and 
only if the projection of $A$ into the quotient space $\HH/V$ is pre-compact. In this case, the atlas $\{(\varphi_\alpha,\U_\alpha)\}$ is called a \textit{strong integrable structure} modeled on $(\HH,V)$ for the essential
subbundle $\mathcal E$. 

In what follows we will be interested in the inverse construction, since it will allow us to build essential subbundles starting from ``natural'' local data: 
If $\{(\varphi_\alpha,\U_\alpha)\}$ is an atlas of $\M$ such that \eqref{eq:integrable2} is satisfied with respect to some $V\in \text{Gr}\, (\HH)$, 
then defining $\mathcal E$ by \eqref{eq:integrable1} yields a (by construction strongly integrable) essential subbundle of $T\M$. Such an $\mathcal E$ is liftable to an $(m)$-essential subbundle, $m\in\N$, if and only if
$$\dim (D\tau(\xi) V,V)=0 \quad \text{mod } m.$$

We finish this subsection recalling that there is also a notion of \textit{integrable subbundle}, which is obtained by requiring that for all transition maps $\tau$
\begin{equation}
D\tau(\xi) V \ \text{is a compact perturbation of } V, \quad \forall \xi \in \text{dom}(\tau),
\label{eq:integrable3}
\end{equation} 
hold instead of \eqref{eq:integrable2}. As one readily sees, strong integrability is strictly more restrictive than integrability, because \eqref{eq:integrable2} implies \eqref{eq:integrable3} by differentiation, but on the other hand a non-linear map whose differential at every point is compact need not be compact. 

The reason why strong integrability is better suited for our purposes than integrability is the following: in case of a functional $f$ whose critical points have infinite Morse index and co-index, the integrability of an 
essential subbundle is not enough to conclude pre-compactness of the intersection $W^u(x)\cap 
W^s(y)$ between the unstable and stable manifolds of two critical points $x$ and $y$, even if the integrable essential subbundle is ``nicely'' related with 
the negative gradient flow of $f$ and the Palais-Smale condition holds, see \cite[Section 3]{AM:05}. 
In fact, the intersection $W^u(x)\cap W^s(y)$, though finite dimensional, might consist of infinitely many curves with no cluster points besides 
$x$ and $y$. As explained in \cite[Section 0.3]{AM:05}, this follows from the fact that the conditions for an integrable essential subbundle to be ``nicely'' 
related with the negative gradient flow of $f$ are of local nature, whereas compactness
involves a global condition. On the other hand, if the essential subbundle is additionally strongly integrable, then we can define in a natural way a global condition which implies pre-compactness of the intersections; see \cite[Section 6]{AM:05} or Section \ref{s:precompactness} for further details. Such a condition is formulated in terms of invariance of an essentially vertical family, whose definition we now recall, under 
a suitable negative pseudo gradient vector field for $f$. 

\begin{dfn}
\label{dfn:evf}
Let $\M$ be a Hilbert manifold modeled on the infinite dimensional separable real Hilbert space $\HH$ endowed with a complete Riemannian metric, and let 
$\mathcal E\subset T\M$ be a strongly integrable essential subbundle modeled on $(\HH,V)$ with strong integrable structure $\mathcal A = \{(\varphi_\alpha,\mathcal U_\alpha)\}$. Denote further with $Q:\HH\to \HH$ 
a projector with kernel $V$. A family $\mathcal F$ of subsets of $\M$ is called an \textit{essentially vertical family} for the strong integrable structure $\mathcal A$ of $\mathcal E$ if it satisfies:
\begin{enumerate}
\item[(i)] $\mathcal F$ is an ideal of $\mathcal P(\mathcal M)$, that is, it is closed under finite unions and if $A\in \mathcal F$ then any subset of $A$ is also contained in $\mathcal F$,
\item[(ii)] for every point $p\in \M$ there is a local chart $(\varphi, \mathcal U)\in \mathcal A$ such that every set $A\subset \mathcal U$ with $\varphi(A)$ bounded belongs to $\mathcal F$ if and only if 
$Q\varphi (A)$ is pre-compact, and 
\item[(iii)] if $B\subset \mathcal M$ is contained in an $\epsilon$-neighborhood of some $A_\epsilon\in \mathcal F$ for every $\epsilon >0$, then $B\in \mathcal F$. 
\end{enumerate}
Any set $A\in \mathcal F$ is called \textit{essentially vertical set}.
\end{dfn}

Condition (iii) is in other words saying that $\mathcal F$ is closed with respect to the Hausdorff distance. We shall observe that such a condition was not required in \cite{AM:05}, the reason being 
that it is not needed in the proof of the pre-compactness result for the intersection between stable and unstable manifolds of critical points, see \cite[Theorem 6.5]{AM:05}.
Here we instead require it since it is needed 
in the proof of the abstract transversality theorem~\ref{thm:morsesmalegeneral}, as well as in the proof of the fact that the space of vector fields preserving $\mathcal F$ is a module over the space of Lipschitz functions (see Proposition~\ref{prop:module}).
As we shall see, such a condition is trivially satisfied by the family $\mathcal F$ 
which naturally arises in the construction of the Morse complex for the Hamiltonian action. 


\subsection{Some preliminary computations}

In this section we collect some preliminary estimates and computations about multiplication and commutator operators on Sobolev spaces that will be useful later on. In what follows we denote with $\{e_k:=e^{2\pi k Jt}\}$ the 
standard Hilbert basis of $L^2(\T,\R^n)$. For 
$$u= \sum_{k\in \Z} u_k e_k$$ 
and $s \in \R$ we set 
$$\|u\|_s^2 := \sum_{k\in \Z} (1+|k|)^{2s} |u_k|^2.$$
For $s\geq 0$ we define the Sobolev space
$$H^s(\T,\R^n) := \Big \{u\in L^2(\T,\R^n) \ \Big |\ \|u\|_s<+\infty\},$$
and set $H^{-s}(\T,\R^n)$ to be its dual space. In a similar way we define the Sobolev spaces 
$$H^s(\T,\text{Hom}(\R^n,\R^n)), \qquad s\in \R,$$
where in this case the $H^s$-norm is defined using any equivalent matrix-norm for the Fourier coefficients of a loop $A:\T\to \text{Hom}(\R^n,\R^n)$ with respect to the basis $\{e_k\}$.
For any $\sigma\in \R$ we finally define the $\sigma$-power of the Laplacian $\Delta$ by 
$$\Delta^\sigma u := (2\pi)^{2\sigma} \sum_{k\in \Z^*} |k|^{2\sigma} u_k e_k.$$

We start with the following elementary 

\begin{lem}
For $\alpha,\beta,\gamma\geq 0$ such that $\alpha+\beta>1$, $0\leq \gamma \leq \beta$ and $\gamma < \alpha+\beta -1$ we have 
$$\sum_{h=1}^{+\infty} \frac{1}{h^\alpha(k+h)^\beta} = O( \frac1{k^\gamma}) \qquad \text{for}\ k\to +\infty,$$
\label{lem:abbo1}
\end{lem}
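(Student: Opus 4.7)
The plan is to split the sum at $h=k$ and estimate each half by an elementary comparison, then check that the hypotheses line up to give exactly the claimed exponent.

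For the tail $\sum_{h>k} h^{-\alpha}(k+h)^{-\beta}$, I would use the crude bound $k+h\geq h$ to reduce it to $\sum_{h>k} h^{-(\alpha+\beta)}$. Since $\alpha+\beta>1$, integral comparison gives $O(k^{1-\alpha-\beta})$, and the hypothesis $\gamma<\alpha+\beta-1$ immediately upgrades this to $O(k^{-\gamma})$.

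For the head $\sum_{h=1}^k h^{-\alpha}(k+h)^{-\beta}$, I would instead use $k+h\geq k$ to factor out $k^{-\beta}$, leaving $k^{-\beta}\sum_{h=1}^k h^{-\alpha}$. Standard asymptotics give
\[
\sum_{h=1}^k h^{-\alpha} \;=\; \begin{cases} O(1) & \text{if } \alpha>1,\\ O(\log k) & \text{if } \alpha=1,\\ O(k^{1-\alpha}) & \text{if } 0\leq \alpha<1. \end{cases}
\]
In the first case the head is $O(k^{-\beta})$, which is $O(k^{-\gamma})$ thanks to $\gamma\leq\beta$. In the other two cases the head is, up to a logarithmic factor in the borderline $\alpha=1$ situation, of size $k^{1-\alpha-\beta}$; the strict inequality $\gamma<\alpha+\beta-1$ absorbs that logarithm and again yields $O(k^{-\gamma})$.

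I do not expect any real obstacle here: each hypothesis has a transparent role ($\alpha+\beta>1$ for convergence of the tail, $\gamma\leq\beta$ to handle the head when $\alpha>1$, and the strict inequality $\gamma<\alpha+\beta-1$ to absorb the borderline logarithmic factor when $\alpha\leq 1$), and the estimate is otherwise a routine dyadic-style split.
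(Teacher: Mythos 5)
Your argument is correct, but it follows a different route from the paper. You split the sum at $h=k$ and then run a case analysis on $\alpha$ (convergent, borderline logarithmic, divergent partial sums), using $\gamma\leq\beta$ only in the head when $\alpha>1$ and the strict inequality $\gamma<\alpha+\beta-1$ to kill the remaining (possibly logarithmic) factor. The paper instead makes a single uniform comparison with no splitting and no cases: since $\gamma\leq\beta$ and $k+h\geq h$, one transfers the exponent $\beta-\gamma$ from $k+h$ to $h$,
$$\frac{1}{h^{\alpha}(k+h)^{\beta}}\;\leq\;\frac{1}{h^{\alpha+\beta-\gamma}(k+h)^{\gamma}}\;\leq\;\frac{1}{k^{\gamma}}\,\frac{1}{h^{\alpha+\beta-\gamma}},$$
and then sums the last factor, which converges precisely because $\gamma<\alpha+\beta-1$ is the same as $\alpha+\beta-\gamma>1$. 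What the paper's trick buys is brevity and the avoidance of the borderline $\alpha=1$ logarithm altogether; what your version buys is a very explicit accounting of where each hypothesis is needed in each regime, at the cost of the three-way case distinction and the extra step of absorbing the $\log k$ factor. Both are complete and elementary proofs of the stated bound.
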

\begin{proof}
Since $\gamma\leq \beta$ we have 
\begin{align*}
\sum_{h=1}^{+\infty}  \frac{1}{h^\alpha(k+h)^\beta} & \leq \sum_{h=1}^{+\infty}  \frac{1}{h^{\alpha+\beta-\gamma}(k+h)^\gamma} \\
										&= \frac{1}{k^\gamma} \sum_{h=1}^{+\infty}  \frac{1}{h^{\alpha+\beta-\gamma}(1+h/k)^\gamma}\\
										&\leq  \frac{1}{k^\gamma} \sum_{h=1}^{+\infty}  \frac{1}{h^{\alpha+\beta-\gamma}}
										\end{align*}
										and the claim follows  as the latter series converges by the assumption $\alpha+\beta-\gamma>1$.
\end{proof}

The following result about multiplication operators on Sobolev spaces can be found in \cite[Lemma 28]{Holst:2009} (see also \cite{Zolesio:1977}) for the case of real- or complex-valued functions.
As we are here dealing with vector-valued functions, we include an elementary proof for the sake of completeness. 

\begin{prop}\label{prop:multiplicationSobolev} 
Fix $s>1/2$. Then, for every $r\in [-s,s]$, the pointwise matrix-vector multiplication of a matrix-valued function with a vector-valued function extends uniquely to a continuous bilinear map 
$$H^s(\T,\mathrm{Hom}(\R^n,\R^n))\times H^r(\T,\R^n)\to H^r(\T,\R^n)$$
resp. 
$$H^r(\T,\mathrm{Hom}(\R^n,\R^n))\times H^s(\T,\R^n)\to H^r(\T,\R^n).$$
In other words, for every $r\in [-s,s]$ there exists $c_r>0$ such that 
\begin{align*}
\|A\cdot  u\|_r &\leq c_r \|A\|_s \| u\|_r, \qquad \forall A\in H^s(\T,\mathrm{Hom}(\R^n,\R^n)), \ \forall u \in H^r(\T,\R^n),\vspace{5cm}  \\
\|A\cdot  u\|_r &\leq c_r \|A\|_r \| u\|_s, \qquad \forall A\in H^r(\T,\mathrm{Hom}(\R^n,\R^n)), \ \forall u \in H^s(\T,\R^n).\vspace{5cm} 
\end{align*} 
\end{prop}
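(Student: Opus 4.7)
The plan is to reduce both inequalities to a single scalar convolution estimate via Fourier series, prove that estimate for $r\in[0,s]$ by a pointwise Cauchy--Schwarz in $j$ combined with Lemma~\ref{lem:abbo1}, and then extend to $r\in[-s,0)$ by duality; commutativity of convolution will then automatically deliver the second inequality from the first. Expanding $A=\sum_k A_k e_k$ and $u=\sum_k u_k e_k$, the matrix--vector product has Fourier coefficients $(A\cdot u)_k=\sum_{j\in\Z} A_{k-j}\,u_j$, so upon setting $\alpha_k=|A_k|$ and $\beta_k=|u_k|$ (any matrix resp.\ vector norm of the Fourier coefficients) one obtains $|(A\cdot u)_k|\leq(\alpha\ast\beta)_k$ together with $\|\alpha\|_{H^\sigma(\T,\R)}=\|A\|_\sigma$ and $\|\beta\|_{H^\sigma(\T,\R)}=\|u\|_\sigma$ for every $\sigma\in\R$. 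The task therefore reduces to establishing the scalar bilinear bound
\begin{equation*}
\sum_k(1+|k|)^{2r}\Bigl(\sum_j\alpha_{k-j}\beta_j\Bigr)^2\leq c_r^2\,\|\alpha\|_{H^s}^2\,\|\beta\|_{H^r}^2
\end{equation*}
for all nonnegative sequences $\alpha,\beta$ with finite Sobolev norms.

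For $r\in[0,s]$ I would start from the elementary triangle-inequality bound $(1+|k|)^r\leq C_r[(1+|k-j|)^r+(1+|j|)^r]$ to split the inner sum into two pieces. Introducing $f_k=(1+|k|)^s\alpha_k$ and $g_k=(1+|k|)^r\beta_k$, which lie in $\ell^2$ with norms $\|A\|_s$ and $\|u\|_r$ respectively, the two pieces become
$$\sum_j\frac{f_{k-j}\,g_j}{(1+|k-j|)^s}\qquad\text{and}\qquad\sum_j\frac{f_{k-j}\,g_j}{(1+|k-j|)^{s-r}(1+|j|)^r}.$$
Applying Cauchy--Schwarz in $j$ to each piece and then summing in $k$ (using $\sum_{k,j}f_{k-j}^2\,g_j^2=\|f\|_{\ell^2}^2\|g\|_{\ell^2}^2$) reduces the bilinear bound to the two kernel estimates
$$\sum_{j\in\Z}\frac{1}{(1+|k-j|)^{2s}}<\infty\qquad\text{and}\qquad\sup_{k\in\Z}\sum_{j\in\Z}\frac{1}{(1+|k-j|)^{2(s-r)}(1+|j|)^{2r}}<\infty.$$
The first is immediate and even constant in $k$ because $s>1/2$. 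For the second I would split the $j$-sum into the tails $j<0$ and $j>k$, which take the form $\sum_{h\geq 1}1/[h^\alpha(k+h)^\beta]$ with $\alpha+\beta=2s>1$, to which Lemma~\ref{lem:abbo1} applies with $\gamma=0$; the intermediate range $0\leq j\leq k$ is split at $j=k/2$ and bounded by elementary estimates distinguishing the sub-cases $2r>1$, $2r=1$, $2r<1$, in each of which $s>1/2$ closes the uniform-in-$k$ bound.

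The case $r\in[-s,0)$ is handled by duality: $H^r(\T,\R^n)$ is the dual of $H^{-r}(\T,\R^n)$ under the $L^2$-pairing, and the identity $\langle A\cdot u,w\rangle=\langle u,A^T\!\cdot\! w\rangle$ combined with the already-proved bound applied to $A^T\in H^s$ and $w\in H^{-r}$ (with $-r\in(0,s]$) yields $\|A\cdot u\|_r\leq c_{-r}\|A\|_s\|u\|_r$. The second inequality of the proposition is then an immediate consequence of the first via $\alpha\ast\beta=\beta\ast\alpha$: the scalar bilinear estimate above is symmetric under the substitution exchanging $(\alpha,s)$ with $(\beta,r)$. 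The step I expect to require the most care is the uniform control of the kernel $\sum_j 1/[(1+|k-j|)^{2(s-r)}(1+|j|)^{2r}]$ in the borderline regime $2r\leq 1$ with $r$ close to $s$, i.e.\ where neither exponent is individually $\ell^1$-summable and one genuinely needs to exploit the combined two-exponent decay encoded in Lemma~\ref{lem:abbo1}.
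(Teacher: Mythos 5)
Your proof is correct, and it lives in the same Fourier-analytic framework as the paper's (reduce to a scalar convolution estimate on Fourier coefficients, apply Cauchy--Schwarz in the convolution variable, control a kernel via Lemma~\ref{lem:abbo1}, treat $r<0$ by duality), but the way you handle the weight is genuinely different. The paper keeps the external factor $(1+|k|)^{2r}$ intact, inserts the weights $(1+|k-h|)^{2s}(1+|h|)^{2r}$ through a weighted Cauchy--Schwarz, and must then prove the \emph{decay} estimate $G_r(k)=O(|k|^{-2r})$ for the kernel $G_r(k)=\sum_h(1+|k-h|)^{-2s}(1+|h|)^{-2r}$, after first reducing to $r\in(0,1/2]$ and splitting into four regions using monotonicity and subadditivity of $x\mapsto x^{s-r}$. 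You instead distribute the weight first via the Peetre-type inequality $(1+|k|)^r\lesssim(1+|k-j|)^r+(1+|j|)^r$, after which only \emph{uniform boundedness} of two kernels is required --- one of them constant in $k$, the other handled by Lemma~\ref{lem:abbo1} with $\gamma=0$ together with elementary case distinctions; this is softer than the paper's decay estimate and also dispenses with the preliminary reduction to $r\le 1/2$. A further small gain is that your symmetric scalar reduction $|(A\cdot u)_k|\le(\alpha\ast\beta)_k$ yields the second inequality of the proposition from the first by commutativity of convolution, whereas the paper just asserts that the second proof is identical. The one point you should spell out is the second inequality for $r<0$: the symmetry argument presupposes the scalar estimate $\|\alpha\ast\beta\|_r\le c\|\alpha\|_s\|\beta\|_r$ also for negative $r$, which you get by running your duality step at the level of nonnegative scalar sequences (or, equivalently, by pairing $A$ against the rank-one matrix formed from $u$ and the $H^{-r}$ test vector); with that sentence added the argument closes completely, so this is a presentational remark rather than a gap.
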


\begin{proof}
We prove only the first estimate being the proof of the second one identical. We first consider the case $r\in [0,s]$. Clearly, we can further assume that $r\in (0, 1/2]$, as the claim is obvious for $r=0$ and it follows 
from the standard estimates for the product of two Sobolev functions with supercritical Sobolev exponents for $r\in (1/2,s]$. 
By a standard density argument we can assume that $A$ and $u$ are smooth. We denote with $A_k$ the Fourier coefficients of $A$ and  with $u_k$ the Fourier coefficients of $\dot u$. Clearly, we have
$$A\cdot u = \sum_{k\in\Z} \Big (\sum_{h\in \Z} A_{k-h}u_h\Big ) e_k,$$
and thus using the Cauchy-Schwarz inequality
\begin{align*}
\|A\cdot u \|_r^2 & = \sum_{k\in \Z} (1+|k|)^{2r} \Big | \sum_{h\in\Z} A_{k-h}u_h\Big |^2 \\\
			&\leq \sum_{k\in\Z} (1+|k|)^{2r} \Big (\sum_{h\in\Z} \frac{1}{(1+|k-h|)^{2s}(1+|h|)^{2r}}\Big ) \Big ( \sum_{h\in \Z} \|A_{k-h}\|^2 (1+|k-h|)^{2s}|u_h|^2(1+|h|)^{2r}\Big ).
\end{align*}
We preliminary notice that, for fixed $k\in \Z$, the sum 
$$G_r(k):=\sum_{h\in\Z} \frac{1}{(1+|k-h|)^{2s}(1+|h|)^{2r}}$$
is finite, since by assumption $2s+2r>1$.
All we have to show is that 
$$G_r(k) = O ( \frac{1}{|k|^{2r}}) \qquad \text{for}\ |k|\to +\infty.$$
Indeed, if this is the case then the function $k\mapsto (1+|k|)^{2r}G_r(k)$ is bounded on $\Z$ and hence we obtain for some constant $c>0$
$$\|A\cdot u \|_r^2 \leq c \sum_{k\in \Z}\sum_{h\in \Z} \|A_{k-h}\|^2 (1+|k-h|)^{2s}|u_h|^2(1+|h|)^{2r} = c \|A\|_s^2 \|u \|_r^2.$$
Since $G_r(-k)=G_r(k)$, we can assume without loss of generality $k\in\N$. We split the sum as follows
\begin{align*}
G_r(k) & = G_1(k) + G_2(k)+G_3(k)+G_4(k)\\	
	&= \sum_{h\leq 0} \frac{1}{(1+|k-h|)^{2s}(1+|h|)^{2r}} + \sum_{h=1}^{\lfloor k/2\rfloor} \frac{1}{(1+|k-h|)^{2s}(1+|h|)^{2r}}\\
	& + \sum_{h=\lfloor k/2\rfloor +1}^{k-1} \frac{1}{(1+|k-h|)^{2s}(1+|h|)^{2r}}+ \sum_{h\geq k} \frac{1}{(1+|k-h|)^{2s}(1+|h|)^{2r}}
	\end{align*}
and estimate each of these terms separately. For $G_1(k)$ we use Lemma~\ref{lem:abbo1} with $\alpha=\gamma=2r$ and $\beta=2s$ to infer that for $k\to +\infty$
$$G_1(k) = \sum_{h=0}^{+\infty} \frac{1}{(1+k+h)^{2s}(1+h)^{2r}}= \sum_{h=1}^{+\infty} \frac{1}{(k+h)^{2s}h^{2r}} = O(\frac{1}{k^{2r}}).$$
Similarly, for $G_4(k)$ we have for $k\to +\infty$ 
$$G_4(k) = \sum_{h\geq k} \frac{1}{(1+ h-k)^{2s}(1+h)^{2r}}\leq \frac{1}{k^{2r}} \sum_{h\geq k} \frac{1}{(1+ h-k)^{2s}}= O(\frac{1}{k^{2r}})$$
as the latter series converges by the assumption $s>1/2$. For $G_3(k)$ we estimate similarly
\begin{align*}
G_3(k) & \lesssim \frac{1}{k^{2r}} \sum_{h=\lfloor k/2\rfloor +1}^{k-1} \frac{1}{(1+k-h)^{2s}}\leq \frac{1}{k^{2r}} \sum_{h=1}^{+\infty} \frac{1}{h^{2s}} = O(\frac{1}{k^{2r}}) \qquad \text{for} \ k \to +\infty.
\end{align*}
In order to bound $G_2(k)$ we preliminary compute, for $h=1,...,\lfloor k/2\rfloor$, using the fact that the function $x\mapsto x^{s-r}$ is monotonically increasing and sub-additive
$$(1+k-h)^{s-r} \geq (1+k)^{s-r} - h^{s-r}\geq (2^{s-r}-1)h^{s-r}.$$
This yields 
\begin{align*}
G_2(k) &= \sum_{h=1}^{\lfloor k/2\rfloor} \frac{1}{(1+k-h)^{2s}(1+h)^{2r}}\lesssim \frac{1}{k^{2r}} \sum_{h=1}^{\lfloor k/2\rfloor} \frac{1}{(1+k-h)^{2(s-r)}(1+h)^{2r}}\lesssim  \frac{1}{k^{2r}} \sum_{h=1}^{\lfloor k/2\rfloor} \frac{1}{h^{2s}}
\end{align*}
thus completing the proof in the case $r\in [0,s]$. 

The case of negative Sobolev exponents follows now by a simple duality argument. Indeed, for $r\in [0,s]$ and for every smooth $v$ with $\|v\|_r=1$ we have 
\begin{align*}
|\langle A\cdot u,v\rangle | &= |\langle u, A^T\cdot v\rangle | \leq \|u\|_{-r} \|A^T\cdot v\|_r \leq c \|u\|_{-r} \|A^T\|_s \|v\|_r = c \|u\|_{-r} \|A\|_s,
\end{align*}
where we have used the estimate proved above for the case of non-negative Sobolev exponents. Here, $\langle \cdot,\cdot \rangle$ denotes the duality pairing. 
Taking the supremum over $\|v\|_r=1$ yields 
$$ \|A\cdot u \|_{-r} = \sup_{\|v\|_r=1} |\langle A\cdot u,v\rangle | \leq c \|u\|_{-r} \|A\|_s,$$
thus finishing the proof. 
\end{proof}

\begin{cor}
For every $s>1/2$ the multiplication operators
\begin{align*}
& H^s(\T,\mathrm{Hom}(\R^n,\R^n)) \times H^{s-1}(\T,\R^n)\to H^{s-1}(\T,\R^n), \qquad (A,u) \mapsto A \cdot u\\
& H^{s-1}(\T,\mathrm{Hom}(\R^n,\R^n)) \times H^{s}(\T,\R^n)\to H^{s-1}(\T,\R^n), \qquad (A,u) \mapsto A \cdot u,
\end{align*}
are well-defined and bounded. Moreover, for fixed $A\in H^{s-1}(\T, \mathrm{Hom}(\R^n,\R^n))$, the operator 
\begin{align*}
H^s(\T,\R^n)\to H^{s-1}(\T,\R^n), & \qquad u \mapsto  A\cdot u,
\end{align*}
is compact.
\label{cor:multiplications-1}
\end{cor}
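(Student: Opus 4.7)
The boundedness statements follow immediately by specializing Proposition \ref{prop:multiplicationSobolev} to the exponent $r = s-1$. Since $s > 1/2$, we have $s - 1 \in (-1/2, 1/2) \subset [-s,s]$, so both estimates in that proposition apply verbatim and give the two claimed continuous bilinear maps together with the inequality
$$\|A\cdot u\|_{s-1} \leq c_{s-1} \|A\|_{s-1} \|u\|_s, \qquad \forall A \in H^{s-1}, \ \forall u \in H^s.$$

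For the compactness statement, the plan is to use a density plus approximation argument. Fix $A \in H^{s-1}(\T, \mathrm{Hom}(\R^n,\R^n))$ and pick a sequence of smooth loops $A_k \in C^\infty(\T,\mathrm{Hom}(\R^n,\R^n))$ with $A_k \to A$ in $H^{s-1}$. For each $k$, the loop $A_k$ lies in $H^s$, so the first inequality of Proposition \ref{prop:multiplicationSobolev} applied with $r = s$ shows that the multiplication operator
$$T_{A_k} : H^s(\T,\R^n) \to H^s(\T,\R^n), \qquad u \mapsto A_k \cdot u,$$
is bounded. Composing with the Rellich-type compact inclusion $H^s(\T,\R^n) \hookrightarrow H^{s-1}(\T,\R^n)$ (which holds for $s > 1/2$ since loops are one-dimensional and the embedding is of positive order), we conclude that $T_{A_k}$, viewed as an operator from $H^s$ to $H^{s-1}$, is compact.

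To pass to the limit, apply the first inequality of Proposition \ref{prop:multiplicationSobolev} with $r = s-1$ to the difference $A - A_k$: for every $u \in H^s$,
$$\|(A - A_k) \cdot u\|_{s-1} \leq c_{s-1} \|A - A_k\|_{s-1} \|u\|_s.$$
Hence the operator norm of $T_A - T_{A_k}$, from $H^s$ to $H^{s-1}$, is bounded above by $c_{s-1}\|A - A_k\|_{s-1} \to 0$. Since the compact operators form a closed subspace of $\mathcal{L}(H^s, H^{s-1})$ in the operator norm topology, the limit operator $T_A$ is compact. The only genuinely delicate point is verifying the right inequalities in Proposition \ref{prop:multiplicationSobolev}, which has already been done, so no further obstacle arises.
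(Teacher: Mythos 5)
Your boundedness argument is the same as the paper's (specialize Proposition~\ref{prop:multiplicationSobolev} to $r=s-1$); one cosmetic remark: the intermediate claim $s-1\in(-1/2,1/2)$ only holds for $s<3/2$, but all you need is $s-1\in[-s,s]$, which is true for every $s>1/2$, so nothing is lost. For compactness your route differs from the paper's. The paper gives a one-step factorization of the fixed operator $u\mapsto A\cdot u$: it is the compact embedding $H^s\hookrightarrow H^{r}$ for any $r\in(1/2,s)$ followed by the multiplication $H^{r}\to H^{s-1}$, which is bounded by the second estimate of Proposition~\ref{prop:multiplicationSobolev} applied at the intermediate exponent (here $A\in H^{s-1}$ and $s-1\in[-r,r]$ since $1-s<1/2<r$). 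You instead approximate the symbol: take smooth $A_k\to A$ in $H^{s-1}$, observe that each $T_{A_k}$ is compact from $H^s$ to $H^{s-1}$ (bounded into $H^s$, then the compact inclusion $H^s\hookrightarrow H^{s-1}$), and use the endpoint estimate $\|(A-A_k)\cdot u\|_{s-1}\leq c_{s-1}\|A-A_k\|_{s-1}\|u\|_s$ together with norm-closedness of the compact operators. Both arguments are correct and rest on the same two ingredients (Proposition~\ref{prop:multiplicationSobolev} and compact Sobolev embeddings on $\T$); the paper's factorization is shorter and avoids any approximation or density step, while yours only uses the proposition at the two endpoint exponents $r=s$ and $r=s-1$ and the standard stability of compactness under operator-norm limits, which is a perfectly serviceable and perhaps more routine functional-analytic template.
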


\begin{proof}
The first claim follows directly from Proposition~\ref{prop:multiplicationSobolev} noticing that  $s-1\in [-s,s]$ for $s>1/2$. The second claim follows also easily from Proposition~\ref{prop:multiplicationSobolev}. Indeed, since $1-s<1/2<s$, 
the considered operator can be written as the composition of the compact embedding $H^s\hookrightarrow H^{r}$, for any $1/2<r<s$, with the bounded (since $s-1 \in [-r,r]$) operator
\begin{align*}
H^{r}(\T,\R^n)\to H^{s-1}(\T,\R^n), & \qquad u \mapsto A\cdot u. 	\qedhere
\end{align*}
\end{proof}

The next technical lemma will be needed to prove the boundedness of the commutator operator 
$$[A,\Delta^{s-1}\circ \frac{\diff}{\diff t}] : H^r(\T,\R^n)\to H^{1-s}(\T,\R^n), \qquad r>\frac 12,$$
for fixed $A\in H^s(\T,\text{Hom}(\R^n,\R^n))$, $s\in (1/2,1)$, see Proposition~\ref{prop:abbo}.

\begin{lem}
\label{lem:abbo2}
Fix $s\in (1/2,1)$ and $r>1/2$. For $k\in \Z$ set 
$$F(k) := \sum_{h\in \Z} \frac{\big | |k|^{2(s-1)}k - |h|^{2(s-1)}h \big |^2}{(1+|k-h|)^{2s} (1+|h|)^{2r}} .$$
Then 
$$F(k)= O (\frac 1{|k|^{2(1-s)}}) \qquad \text{for}\ |k|\to +\infty.$$
\end{lem}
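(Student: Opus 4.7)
My plan leverages the $(2s-1)$-H\"older continuity of the function $\phi:\R\to\R$ defined by $\phi(x):=|x|^{2(s-1)}x$ for $x\ne 0$ and $\phi(0):=0$, so that the numerator of $F(k)$ equals $|\phi(k)-\phi(h)|^2$. Since $2s-1\in(0,1)$, the function $t\mapsto t^{2s-1}$ is sub-additive on $[0,\infty)$. Thus, for $k\geq h\geq 0$, writing $k=(k-h)+h$ gives $\phi(k)-\phi(h)=k^{2s-1}-h^{2s-1}\leq(k-h)^{2s-1}$, while for $k,h$ of opposite sign, $|\phi(k)-\phi(h)|=|k|^{2s-1}+|h|^{2s-1}\leq 2(|k|+|h|)^{2s-1}=2|k-h|^{2s-1}$. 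Hence $|\phi(k)-\phi(h)|\leq 2|k-h|^{2s-1}$ for all $k,h\in\R$.

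Inserting this bound and using $|k-h|^{2(2s-1)}\leq(1+|k-h|)^{2(2s-1)}$ in the numerator of $F(k)$ reduces the problem to showing
\[
\Sigma(k):=\sum_{h\in\Z}\frac{1}{(1+|k-h|)^{2(1-s)}(1+|h|)^{2r}}=O(|k|^{-2(1-s)})\qquad\text{as }|k|\to\infty.
\]
By the symmetry $h\mapsto -h$, which interchanges $\Sigma(k)$ with $\Sigma(-k)$, I can assume $k>0$ is large and split the sum over the four ranges $h\leq 0$, $1\leq h\leq\lfloor k/2\rfloor$, $\lfloor k/2\rfloor<h\leq k$, and $h>k$.

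In the two extremal ranges I substitute $h=-m$ (with $m\geq 0$) and $h=k+m$ (with $m\geq 1$) respectively and, after comparing $(1+k+m)^{2(1-s)}$ with $(k+m)^{2(1-s)}$ up to a bounded factor, I apply Lemma~\ref{lem:abbo1} with $(\alpha,\beta,\gamma)=(2r,2(1-s),2(1-s))$ in the first case and $(\alpha,\beta,\gamma)=(2(1-s),2r,2(1-s))$ in the second. The three hypotheses $\alpha+\beta>1$, $0\leq\gamma\leq\beta$ and $\gamma<\alpha+\beta-1$ reduce in both cases to $r>1/2$ (together with $r\geq 1-s$ in the second, which follows from $r>1/2>1-s$).

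In the two middle ranges I argue directly: for $1\leq h\leq\lfloor k/2\rfloor$ the factor $1+|k-h|$ is bounded below by $1+k/2$, and since $2r>1$ the series $\sum_{h\geq 1}(1+h)^{-2r}$ converges, contributing $O(k^{-2(1-s)})$; for $\lfloor k/2\rfloor<h\leq k$ the factor $1+|h|$ is bounded below by $1+k/2$, and $\sum_{j=0}^{\lfloor k/2\rfloor}(1+j)^{-2(1-s)}=O(k^{2s-1})$ since $2(1-s)\in(0,1)$, contributing $O(k^{-2r+2s-1})$, which is $O(k^{-2(1-s)})$ precisely because $r\geq 1/2$. Summing the four bounds yields the claim. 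The one non-routine step is the H\"older estimate for $\phi$; the rest is the same four-region decomposition used in the proof of Proposition~\ref{prop:multiplicationSobolev}.
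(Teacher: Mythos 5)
Your proof is correct, and it takes a genuinely different (more modular) route than the paper's, even though both rest on the subadditivity of $t\mapsto t^{2s-1}$ and on Lemma~\ref{lem:abbo1}. The paper splits $F(k)$ into three regions and estimates the \emph{numerator} differently in each: for $0\le h<k$ it multiplies by $k^{2(1-s)}$ and uses the two-step bound $k^{2(1-s)}\big(k^{2s-1}-h^{2s-1}\big)^2\le (k^s-h^s)^2\le (k-h)^{2s}$, which cancels the factor $(1+k-h)^{2s}$ exactly; for $h\ge k$ it uses subadditivity of $t^{2s-1}$; and for $h<0$ it expands $(k^{2s-1}+h^{2s-1})^2\le 2\big(k^{2(2s-1)}+h^{2(2s-1)}\big)$ and applies Lemma~\ref{lem:abbo1} twice with different parameters. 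You instead make one uniform observation, the global $(2s-1)$-H\"older bound $|\phi(k)-\phi(h)|\le 2|k-h|^{2s-1}$ for $\phi(x)=|x|^{2(s-1)}x$ (your two cases plus the oddness of $\phi$, which settles the case of two negative arguments you did not write out, cover all signs), and this reduces the lemma to the single weighted-convolution estimate $\Sigma(k)=O(|k|^{-2(1-s)})$, which is exactly the analogue of $G_r(k)$ from Proposition~\ref{prop:multiplicationSobolev} with the roles of the sub- and super-critical exponents swapped; you then prove it by the same four-region scheme, with Lemma~\ref{lem:abbo1} in the two tails (your parameter choices do satisfy its three hypotheses, using $2r>1$ and $r>1/2>1-s$) and elementary bounds in the two middle ranges. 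The trade-off: your H\"older bound is lossier than the paper's refined estimate near the diagonal, where it only yields a contribution $O(k^{2s-1-2r})$, and this is $O(k^{-2(1-s)})$ precisely because $r\ge 1/2$; but since the lemma assumes $r>1/2$, and the paper's sharper $F_1$ estimate also needs $r>1/2$ for the convergence of $\sum_h h^{-2r}$, the two arguments cover the same range of parameters. What your organization buys is that the numerator estimate and the convolution estimate are decoupled, so the second half is literally of the type already handled in Section 2; what the paper's buys is slightly sharper pointwise control for $h$ close to $k$, which is not needed for this statement.
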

\begin{proof}
Notice that $F(k)$ is finite for every $k\in\Z$ since by the assumptions on $s$ and $r$
$$2s+2r-2(2s-1) = 2 (r-s+1)>1.$$
Also, since $F(-k)=F(k)$ it is enough to consider the case $k\to +\infty$. Given $k>0$, we split 
$$F(k) = F_1(k)+F_2(k)+F_3(k),$$
where 
\begin{align*}
F_1(k) &=  \sum_{h=0}^{k-1} \frac{\big | |k|^{2(s-1)}k - |h|^{2(s-1)}h \big |^2}{(1+|k-h|)^{2s} (1+|h|)^{2r}},\\
F_2(k) &= \sum_{h\geq k} \frac{\big | |k|^{2(s-1)}k - |h|^{2(s-1)}h \big |^2}{(1+|k-h|)^{2s} (1+|h|)^{2r}},\\
F_3(k) &= \sum_{h<0} \frac{\big | |k|^{2(s-1)}k - |h|^{2(s-1)}h \big |^2}{(1+|k-h|)^{2s} (1+|h|)^{2r}},
\end{align*}
and bound each of these terms separately. By the assumption on $s$ the functions $x\mapsto x^s$ and $x\mapsto x^{1-s}$ are monotonically increasing 
and subadditive, and we deduce 
\begin{align*}
k^{2(1-s)} F_1(k) &= k^{2(1-s)}  \sum_{h=0}^{k-1} \frac{\big ( k^{2s-1} - h^{2s-1} \big )^2}{(1+k-h)^{2s} (1+h)^{2r}}\\
			&= \sum_{h=0}^{k-1} \frac{\big ( k^{s} - k^{1-s}h^{2s-1} \big )^2}{(1+k-h)^{2s} (1+h)^{2r}}\\
			&\leq  \sum_{h=0}^{k-1} \frac{\big ( k^{s} - h^{s} \big )^2}{(1+k-h)^{2s} (1+h)^{2r}}\\
			&\leq \sum_{h=0}^{k-1} \frac{\big ( k - h \big )^{2s}}{(1+k-h)^{2s} (1+h)^{2r}}\\
			&\leq \sum_{h=0}^{k-1} \frac{1}{(1+h)^{2r}}\\
			&\leq \sum_{h=1}^{+\infty} \frac 1{h^{2r}}.
\end{align*}
The claim 
$$F_1(k) = O(\frac{1}{k^{2(s-1)}}) \qquad \text{for} \ k\to +\infty$$
follows as the latter series in the above chain of inequalities converges by the assumption $r>1/2$. 

By the subadditivity of the function $x\mapsto x^{2s-1}$ we find the following upper bound for $F_2(k)$: 
\begin{align*}
F_2(k) &=  \sum_{h= k}^{+\infty} \frac{\big ( h^{2s-1} - k^{2s-1}\big )^2}{(1+h-k)^{2s} (1+h)^{2r}}\\
      &\leq \sum_{h= k}^{+\infty} \frac{\big ( h - k\big )^{2(2s-1)}}{(1+h-k)^{2s} (1+h)^{2r}}\\
      &\leq \sum_{h= k}^{+\infty} \frac{\big ( 1+ h - k\big )^{2(2s-1)}}{(1+h-k)^{2s} (1+h)^{2r}}\\
      &= \sum_{h= k}^{+\infty} \frac{1}{(1+h-k)^{2(1-s)} (1+h)^{2r}}\\
      &= \sum_{\ell=1}^{+\infty} \frac{1}{\ell^{2(1-s)}(k+\ell)^{2r}}.
\end{align*}
Since $s\in (1/2,1)$ and $r>1/2$, Lemma~\ref{lem:abbo1} with $\alpha=\gamma=2(1-s)$ and $\beta=2r$ yields 
$$F_2(k) = O(\frac{1}{k^{2(1-s)}}) \qquad \text{for}\ k\to +\infty.$$
We now bound $F_3(k)$: 
\begin{align*}
F_3(k) &= \sum_{h=1}^{+\infty} \frac{\big ( k^{2s-1} + h^{2s-1}\big )^2}{(1+k+h)^{2s}(1+h)^{2r}}\\
	&\leq 2 \sum_{h=1}^{+\infty} \frac{ k^{2(2s-1)} + h^{2(2s-1)}}{(k+h)^{2s}h^{2r}}\\
	&= 2 k^{2(2s-1)}\sum_{h=1}^{+\infty} \frac{1}{(k+h)^{2s}h^{2r}} + 2 \sum_{h=1}^{+\infty} \frac{1}{(k+h)^{2s}h^{2(r+1-2s)}}.
\end{align*}
The first series in the last expression can be estimated using Lemma~\ref{lem:abbo1} with $\alpha=2r$, $\beta=\gamma=2s$, thus obtaining 
$$k^{2(2s-1)}\sum_{h=1}^{+\infty} \frac{1}{(k+h)^{2s}h^{2r}} = k^{2(2s-1)} O(\frac{1}{k^{2s}}) = O(\frac{1}{k^{2(1-s)}}) \qquad \text{for}\ k\to +\infty,$$
whereas applying Lemma~\ref{lem:abbo1} to the second series with $\alpha=2(r+1-2s)$, $\beta=2s$, and $\gamma=2(1-s)$ yields
$$\sum_{h=1}^{+\infty} \frac{1}{(k+h)^{2s}h^{2(r+1-2s)}}= O ( \frac{1}{k^{2(1-s)}}) \qquad \text{for}\ k\to +\infty.$$
This completes the proof of the lemma.
\end{proof}

\begin{prop}
\label{prop:abbo}
Fix $s\in (1/2,1)$. Then, for every $r>1/2$ there exists $c_r>0$ such that 
$$\Big \|[A,\Delta^{s-1}\circ \frac{\diff}{\diff t}] u \Big \|_{1-s}\leq c_r \|A\|_s \|u\|_r$$
for every $A\in H^s(\T, \mathrm{Hom}(\R^n,\R^n))$, and $u\in H^r(\T,\R^n)$. 
\end{prop}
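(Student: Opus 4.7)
The plan is to work entirely in Fourier coordinates, mimicking the strategy of Proposition~\ref{prop:multiplicationSobolev} but with the symbol of $T:=\Delta^{s-1}\circ \frac{\diff}{\diff t}$ inserted in the appropriate place. First I would compute that $T$ is a Fourier multiplier: writing $m(k) := (2\pi)^{2s-1} |k|^{2(s-1)} k \, J$ (with $m(0)=0$), one has $Tu = \sum_k m(k) u_k e_k$ for $u=\sum_k u_k e_k$. Expanding the pointwise product $A\cdot u$ in Fourier series and subtracting, one finds
$$[A,T]u = \sum_{k\in\Z} \Big( \sum_{h\in\Z} A_{k-h}\,(m(h)-m(k))\,u_h \Big) e_k.$$
The key point is that the symbol difference $m(h)-m(k)$ vanishes on the diagonal $h=k$, so one gets extra cancellation.

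Next I would apply Cauchy-Schwarz to each inner sum, splitting the weight optimally:
$$\Big|\sum_h A_{k-h}(m(h)-m(k))u_h\Big|^2 \leq \Big( \sum_h \frac{|m(h)-m(k)|^2}{(1+|k-h|)^{2s}(1+|h|)^{2r}} \Big)\cdot \Big( \sum_h \|A_{k-h}\|^2 (1+|k-h|)^{2s} |u_h|^2 (1+|h|)^{2r} \Big).$$
Multiplying by $(1+|k|)^{2(1-s)}$ and summing over $k$, the second factor, upon interchanging the order of summation, is controlled by $\|A\|_s^2\,\|u\|_r^2$. For the first factor, up to the constant $(2\pi)^{2(2s-1)}$, the sum is exactly the quantity $F(k)$ estimated in Lemma~\ref{lem:abbo2}, which gives $F(k)=O(1/|k|^{2(1-s)})$ as $|k|\to\infty$. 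Hence $(1+|k|)^{2(1-s)} F(k)$ is bounded uniformly in $k$, which is the crucial point that allows us to pull it out of the sum.

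Combining these ingredients yields
$$\|[A,T]u\|_{1-s}^2 \leq c_r^2 \,\|A\|_s^2\,\|u\|_r^2,$$
as desired. The only delicate step is recognizing that the $|k|^{-2(1-s)}$ decay produced by Lemma~\ref{lem:abbo2} is precisely what is needed to cancel the $H^{1-s}$-weight $(1+|k|)^{2(1-s)}$; this cancellation is what makes the commutator one order smoother than the naive bound on $A\cdot Tu$ or $T(A\cdot u)$ separately would suggest. Everything else reduces to standard Fourier bookkeeping and has already been carried out (in the case $m\equiv 1$) in the proof of Proposition~\ref{prop:multiplicationSobolev}.
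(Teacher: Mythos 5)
Your proposal is correct and follows essentially the same route as the paper's proof: expand the commutator in Fourier series so that the symbol difference $|h|^{2(s-1)}h-|k|^{2(s-1)}k$ appears, apply Cauchy--Schwarz with the weights $(1+|k-h|)^{2s}(1+|h|)^{2r}$, and invoke Lemma~\ref{lem:abbo2} to see that $(1+|k|)^{2(1-s)}F(k)$ is bounded, after which interchanging the sums gives $\|A\|_s^2\|u\|_r^2$. No gaps to report.
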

\begin{proof}
By a standard density argument we may assume that both $u$ and $A$ are smooth. We write 
$$A= \sum_{k\in \Z} A_k e_k, \qquad u = \sum_{h\in\Z} u_he_h,$$
for the Fourier series expansion of $A$ and $u$ respectively, where $\{e_h\}$ denotes the standard Hilbert basis of $L^2$. A straightforward computation shows that 
$$[A,\Delta^{s-1}\circ \frac{\diff}{\diff t}] u = (2\pi)^{2s-1} i \sum_{k\in\Z} \Big ( \sum_{h\in\Z} \big ( |h|^{2(s-1)}h - |k|^{2(s-1)}k \big ) A_{k-h}u_h \Big) e_k.$$
Therefore, using the Cauchy-Schwarz inequality and Lemma~\ref{lem:abbo2} we obtain for some positive number $c$:
\begin{align*}
\Big \|[A,\Delta^{s-1}\circ \frac{\diff}{\diff t}] u \Big \|_{1-s}^2 & = (2\pi)^{2(2s-1)} \sum_{k\in\Z} (1+|k|)^{2(1-s)} \Big |  \sum_{h\in\Z} \big ( |h|^{2(s-1)}h - |k|^{2(s-1)}k \big ) A_{k-h}u_h\Big |^2\\
				&\leq (2\pi)^{2(2s-1)} \sum_{k\in\Z} (1+|k|)^{2(1-s)} F(k) \sum_{h\in\Z} (1+|k-h|)^{2s} |A_{k-h}|^2 (1+|h|)^{2r} |u_h|^2 \\
				&\leq c  \sum_{k\in\Z} \sum_{h\in\Z} (1+|k-h|)^{2s} |A_{k-h}|^2 (1+|h|)^{2r} |u_h|^2 \\
				&= c \sum_{h\in\Z} (1+|h|)^{2r} |u_h|^2 \sum_{k\in\Z} (1+|k-h|)^{2s} |A_{k-h}|^2\\
				&= c\|A\|_s^2 \|u\|_r^2
				\end{align*}
				as claimed. 
\end{proof}

\begin{rmk}
The proposition above implies that, for any fixed $A\in H^s(\T,\text{Hom}(\R^n,\R^n))$, the commutator 
$$ [A,\Delta^{s-1}\circ \frac{\diff}{\diff t}] :H^s(\T,\R^n)\to H^{1-s}(\T,\R^n)$$
is a compact operator, for it can be written as the composition of a bounded operator (the commutator itself defined on $H^{r},$ $r\in (1/2,s)$) with the 
compact embedding $H^s\hookrightarrow H^{r}$. Notice that the single operators 
$$A \Delta^{s-1} \frac{\diff}{\diff t}: H^s_0(\T,\R^n)\to H^{1-s}(\T,\R^n), \quad \Delta^{s-1} \frac{\diff}{\diff t} (A\cdot):H^s_0(\T,\R^n)\to H^{1-s}(\T,\R^n),$$
are only bounded. Also, it is not clear to us whether or not the commutator operator gain derivatives (unless $A$ is assumed to be sufficiently regular). In fact, one would expect from the theory of pseudo-differential operators that $[A,\Delta^{s-1}\frac{\diff}{\diff t}]$ be a pseudo-differential operator 
of order $2s-2$, and thus define a bounded operator from $H^s$ into $H^{2-s}$. This however will not be relevant for our purposes.

We shall finally observe the following consequence Proposition~\ref{prop:abbo} which will be useful later on: if $\{A_m\}\subset H^s(\T,\text{Hom}(\R^n,\R^n))$ is a bounded sequence 
and $\{u_m\}\subset H^s(\T,\R^n)$ weakly-converges in $H^s$ to $u\in H^s(\T,\R^n)$, then $\{[A_m,\Delta^{s-1} \frac{\diff}{\diff t}]u_m\}$ converges strongly in $H^{1-s}$ 
to $[A,\Delta^{s-1} \frac{\diff}{\diff t}]u$. \qed
\end{rmk}

\begin{rmk}
\label{rmk:easiercommutator}
Using the chain rule we see that
$$[A,\Delta^{s-1} \frac{\diff}{\diff t}] u = [A,\Delta^{s-1}] \dot u - \Delta^{s-1} \dot A \cdot u.$$
Therefore, combining Corollary~\ref{cor:multiplications-1} with Proposition~\ref{prop:abbo} we obtain that, for every $s\in (1/2,1)$ and every $A\in H^s(\T,\text{Hom}(\R^n,\R^n))$ fixed, also the commutator operator 
\begin{equation}
[A,\Delta^{s-1}] \circ \frac{\diff}{\diff t}:H^s(\T,\R^n)\to H^{1-s}(\T,\R^n)
\label{eq:easiercommutator}
\end{equation}
is well-defined and compact. Moreover, for any bounded sequence $\{A_m\}\subset H^s(\T,\text{Hom}(\R^n,\R^n))$ and any weakly-converging sequence $\{u_m\}\subset H^s(\T,\R^n)$, the sequence 
$\{[A,\Delta^{s-1}]\dot u_m\}$ is strongly converging in $H^{1-s}$. 
\end{rmk}

\section{The (0)-essential subbundle $\mathcal E$}
\label{s:essentialsubbundle}


In this section we show that there is a natural way to define a $(0)$-essential strongly integrable essential subbundle $\mathcal E^s\subset T\M^{1-s}$, 
for which the standard atlas of $\M^{1-s}$ (c.f. Subsection 2.1) is a strong integrable structure such that the negative eigenspace of the Hessian of $\A_H$ at each critical point $(\q,\p)$ of $\A_H$ 
belongs to the essential class $\mathcal E^s(\q,\p)$. In particular, we have a well-defined (integer valued) 
notion of relative Morse index for critical points of $\A_H$. In Section~\ref{s:precompactness} we will then show that $\mathcal E^s$ behaves ``nicely'' under a suitable pseudo-gradient flow of $\A_H$, thus allowing us to prove that the intersection between stable and unstable manifolds of any two critical points of $\A_H$ is finite dimensional and pre-compact. 


For $s\in (1/2,1)$ fixed we set $\HH$ to be the Hilbert space 
$$\HH := H^s(\T,\R^n)\times H^{1-s}(\T,(\R^n)^*)$$
endowed with the standard Hilbert product
$$\langle \cdot, \cdot \rangle_\HH := \langle \text{pr}_1(\cdot),\text{pr}_1(\cdot)\rangle_s + \langle \text{pr}_2 (\cdot),\text{pr}_2(\cdot)\rangle_{1-s},$$
where $\text{pr}_1:\HH\to H^s(\T,\R^n)$ and $\text{pr}_2:\HH\to H^{1-s}(\T,(\R^n)^*)$ denote the projections onto the first and second factor respectively, 
and define the operator
\begin{equation}
L:\HH\to \HH, \quad L(\q,\p) := (T^*\Delta^{-s} \dot \p, -T\Delta^{s-1}\dot \q).
\label{Lflat}
\end{equation}
Here $\Delta$ denotes the Laplace-operator, $\Delta^\alpha$ is for every $\alpha\in \R$ defined by 
$$\Delta^\alpha u := \sum_{k\in \Z^*} (2\pi)^{2\alpha} |k|^{2\alpha} u_k e_k,$$
where $\{e_k\}$ is the standard Hilbert basis of $L^2(\T,\R^n)$ and $u=\sum_{k\in \Z} u_k e_k$ is the Fourier expansion of $u$, and
$T:\R^n\to (\R^n)^*$ denotes the canonical identification (notice that $T$ and $T^*$ commute with both $\Delta$ and $\diff /\diff t$). 
It is straightforward to check that $L$
is a bounded self-adjoint operator representing the quadratic form $\langle \p ,\dot \q\rangle$ with respect to the Hilbert product $\langle \cdot,\cdot\rangle_{\HH}$, i.e. 
$$\frac 12 \langle L(\q,\p),(\q,\p)\rangle_{\HH} = \langle \p ,\dot \q\rangle =\int_0^1 \p(t) [\dot \q(t)]\, \diff t, \qquad \forall (\q,\p)\in \HH.$$
Moreover, $L$ has $2n$-dimensional kernel $\HH^0$ given by constant loops and its eigenvalues are given by $\pm 1$ with corresponding eigenspaces 
\begin{align}
\HH^\pm &= \Big \{ (\q,\mp T \Delta^{s-1} \dot \q) \ \Big |\ \q \in H^s (\T,\R^n), \ \int_0^1 \q(t)\, \diff t =0 \Big \}.\label{eq:H-}
\end{align} 
In what follows, we denote with $H^s_0(\T,\R^n)\subset H^s(\T,\R^n)$ the subspace of loops with zero mean, and with $Q:\mathbb H\to \mathbb H$ the orthogonal projector onto $\HH^+$ (since $\HH^0$ is finite dimensional, there is no need to worry about it). 
For every open subset $U\subset \R^n$ and every diffeomorphism 
$$\T\times U \to \T \times M,\qquad (t,q)\mapsto (t,\varphi(t,q)),$$
we denote  the induced local parametrization by
$$\varphi_* : H^s(\T,U)\times H^{1-s}(\T, (\R^n)^*)\to \M^{1-s},$$
and denote by $\mathcal E_\varphi^s$ the local subbundle obtained by pushing forward  the constant subbundle of 
$$T \big (H^s(\T,U)\times H^{1-s}(\T,(\R^n)^*)\big ) \cong H^s(\T,U)\times H^{1-s}(\T,(\R^n)^*) \times \HH$$
given by taking in each tangent space the negative eigenspace $\HH^-:=\HH^-(L)$ of the operator $L$ under $\varphi_*$.
In this way, we obtain a family of local subbundles
\begin{equation}
\mathcal E^s := \{\mathcal E_\varphi^s\}.
\label{mathcalE}
\end{equation}

\begin{prop}
For every $s\in (1/2,1)$ fixed, $\mathcal E^s$ defines an integrable (0)-essential subbundle of $T\M^{1-s}$. Moreover, the negative eigenspace $\HH^u(\q,\p)$ of the Hessian 
$\diff^2 \A_H(\q,\p)$ of $\A_H:\M^{1-s}\to \R$ at a critical point $(\q,\p)$ belongs 
to the essential class $\mathcal E^s(\q,\p)$, meaning that $\HH^u(\q,\p)$ is a compact perturbation of $\mathcal E^s_\varphi$ for any local parametrization $\varphi_*$ such that $(\q,\p)\in \mathrm{Im}\, \varphi_*$. 
\label{prop:integrable}
\end{prop}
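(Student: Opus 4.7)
My plan is to exploit the ``inverse construction'' of Remark~\ref{rmk:liftability}: the standard atlas of $\M^{1-s}$ together with the constant subspace $V=\HH^-\subset\HH$ will provide the data from which $\mathcal E^s$ arises as a strongly integrable essential subbundle, after which I have to verify separately the vanishing of the relative dimension and the statement about the Hessian at critical points.

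To address the essential-subbundle condition, I would fix two local parametrizations $\varphi_*,\psi_*$ with overlapping images and, at a point $(\q_0,\p_0)$ of the overlap, compute the differential $D\tau$ of the transition $\tau=\psi_*^{-1}\!\circ\!\varphi_*$. Setting $B(t)=\diff(\psi^{-1}\!\circ\!\varphi)(t,\q_0(t))$, a direct computation from the formulas for $\varphi_*,\psi_*$ gives
\[
D\tau(\delta\q,\delta\p)=\bigl(B\,\delta\q,\,B^{-T}\delta\p+K\,\delta\q\bigr),
\]
where $K$ is multiplication by an $H^{1-s}$ matrix-valued function depending bilinearly on $\p_0$ and on derivatives of $B^{-T}$. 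Reparametrizing $D\tau(\HH^-)$ as the graph of an operator $\eta\mapsto F\eta$ from $H^s_0(\T,\R^n)$ to $H^{1-s}(\T,(\R^n)^*)$ and expanding via the product rule
\[
B^{-T}\Delta^{s-1}\tfrac{\diff}{\diff t}\circ B^{-1}
= B^{-T}B^{-1}\Delta^{s-1}\tfrac{\diff}{\diff t}
+ B^{-T}[\Delta^{s-1},B^{-1}]\tfrac{\diff}{\diff t}
+ B^{-T}\Delta^{s-1}\dot B^{-1}\,,
\]
Remark~\ref{rmk:easiercommutator} identifies the second summand as compact $H^s\to H^{1-s}$, Corollary~\ref{cor:multiplications-1} does the same for the third summand and for $KB^{-1}$, and the essential-class comparison between $F\eta$ and $T\Delta^{s-1}\dot\eta$ thereby reduces modulo compact operators to the residual term $(B^{-T}B^{-1}-I)\Delta^{s-1}\dot\eta$. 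This residue is \emph{not} compact in the flat metric of $\HH$, but it becomes negligible at the essential-class level once one performs the comparison in the intrinsic Riemannian metric of $T\M^{1-s}$: the factor $B^{-T}B^{-1}$ is precisely the dual metric tensor $g^\ast$ read in the $\psi$-chart, and is exactly what distinguishes the intrinsic inner product on the vertical $H^{1-s}$-fibers from the flat one. This yields $\mathcal E_\varphi^s\sim \mathcal E_\psi^s$ as essential classes on the overlap; the vanishing of the relative dimension then follows from a homotopy argument connecting $D\tau$ to the identity through operators of the same block form and invoking the integer-valued nature of the relative dimension.

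For the statement about the Hessian, at a critical point $(\q_0,\p_0)$ the second differential of $\A_H$ splits as $\diff^2\A_H(\q_0,\p_0)=L+R$, where $L$ is the operator in~\eqref{Lflat} and $R=-\diff^2\mathcal H(\q_0,\p_0)$. Theorem~\ref{lem:compactness}(2) yields the compactness of $\diff\mathcal H$, so $R$ is a compact self-adjoint operator on $\HH$, and standard perturbation theory then gives that the negative eigenspace of $L+R$ is a compact perturbation of the negative eigenspace $\HH^-$ of $L$, which is the desired statement $\HH^u(\q_0,\p_0)\in[\HH^-]$. The hard part of the whole argument will be the leading residue $(B^{-T}B^{-1}-I)\Delta^{s-1}\dot\eta$ in the transition analysis, whose compactness is only visible after replacing the flat metric of $\HH$ by the intrinsic Riemannian metric of $T\M^{1-s}$; all remaining compactness contributions are by design supplied by the commutator bound in Proposition~\ref{prop:abbo} and the multiplication bound in Corollary~\ref{cor:multiplications-1} from Section~\ref{section:2}.
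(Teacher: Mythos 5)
Your computation of $\diff\tau_*$ and your bookkeeping of the compact contributions are consistent with the paper's proof: the term $K B^{-1}$ is compact by Corollary~\ref{cor:multiplications-1}, the commutator term $B^{-T}[\Delta^{s-1},B^{-1}]\frac{\diff}{\diff t}$ is compact by Remark~\ref{rmk:easiercommutator} (equivalently Proposition~\ref{prop:abbo}), and $B^{-T}\Delta^{s-1}\dot B^{-1}$ is compact again by Corollary~\ref{cor:multiplications-1}. The genuine gap is in how you dispose of the residual $(B^{-T}B^{-1}-I)\Delta^{s-1}\frac{\diff}{\diff t}$. Compactness of a bounded operator from $H^s_0(\T,\R^n)$ to $H^{1-s}(\T,(\R^n)^*)$ does not depend on the choice of equivalent norms, and, more to the point, the relation ``$V$ is a compact perturbation of $W$'' between closed subspaces of $\HH$ is independent of the equivalent Hilbert product used to define the orthogonal projections: it is equivalent to requiring that $(I-Q_W)Q_V$ and $(I-Q_V)Q_W$ be compact for some (hence every) bounded, not necessarily orthogonal, projections $Q_V,Q_W$ onto $V,W$, a criterion that makes no reference to any metric. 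Consequently, ``performing the comparison in the intrinsic Riemannian metric of $T\M^{1-s}$'' cannot turn a non-compact residue into a negligible one: if $(B^{-T}B^{-1}-I)\Delta^{s-1}\frac{\diff}{\diff t}$ really survived, the two graphs would fail to be commensurable in every equivalent metric, and the essential class would genuinely change. The paper's proof does not have this residue at all: writing $A^{-1}=\diff\tau$, it invokes the intertwining identity $A^*T=TA^{-1}$ for the identification $T$ entering the definition of $\HH^-$, so that the leading terms cancel exactly and the difference of the two graph maps reduces to the compact multiplication $BA$ plus (a bounded invertible multiple of) the commutator $[A,\Delta^{s-1}\frac{\diff}{\diff t}]$, compact by Proposition~\ref{prop:abbo}. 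Your residual is precisely the failure of that identity in your normalization; to close the argument you must establish this leading-order cancellation (i.e.\ show that, with the identification used to define the local model subspace, the fiber Jacobian and the base Jacobian combine to the identity at leading order), not change the ambient inner product.

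Two further points. For the $(0)$-essentiality you only gesture at ``a homotopy argument through operators of the same block form''; for the relative dimension to be defined and locally constant along such a homotopy the images must stay inside one commensurability class, which is exactly what is at stake, so this is not yet a proof. The paper argues differently: both $\HH^-$ and $\diff\tau_*[\HH^-]$ are graphs over the $H^s_0$-factor, hence each forms a Fredholm pair of index $0$ with the vertical subspace $\{0\}\times H^{1-s}(\T,(\R^n)^*)$, and the additivity of the relative index (Proposition 5.1 in \cite{AM:03}) forces $\dim(\HH^-,\diff\tau_*[\HH^-])=0$. Finally, your treatment of the Hessian is in substance the paper's (in a chart, $\A_H\circ\varphi_*$ is the flat pairing $\int_0^1\nu[\dot\xi]\,\diff t$ plus a functional with compact differential, whose second differential is therefore compact), but note that the comparison must be made with $\mathcal E^s_\varphi=\diff\varphi_*[\HH^-]$, i.e.\ in the chart, and that besides $\mathcal H\circ\varphi_*$ the chart expression of the Liouville term contributes the additional summand $\int_0^1\nu\big[\diff\varphi^{-1}[\partial_t\varphi]\big]\diff t$, which contains no derivatives of $(\xi,\nu)$ and must also be absorbed into the compact part; the invariant-looking splitting ``$\diff^2\A_H=L+R$'' presupposes this chart computation.
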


\begin{dfn}
\label{def:morseindex}
Let $(\q,\p)$ be a critical point of the Hamiltonian action functional $\A_H:\M^{1-s}\to \R$. Then, the \textit{relative Morse index} $\mathrm{m}(\q,\p)$ is defined as 
$$\mathrm{m}(\q,\p):= \mathrm{m}\big ((\q,\p),\mathcal E^s):= \dim (\HH^u(\q,\p), \mathcal E_\varphi^s),$$ 
that is, as the relative dimension between the negative eigenspace $\HH^u(\q,\p)$ of the Hessian of $\A_H$ at $(\q,\p)$ and $\mathcal E_\varphi^s$, where  $\varphi_*$ is any local parametrization 
of $\M^{1-s}$ such that $(\q,\p)\in \text{Im}\, \varphi_*$. By Proposition~\ref{prop:integrable}, $\mathrm{m}(\q,\p)$ is a well-defined integer, i.e. independent of the choice of $\varphi_*$. 
\end{dfn}

\begin{proof} 
We need to show that for every transition map $\tau_*$ we have:
\begin{itemize}
\item $\diff \tau_* (\HH^-)$ is compact perturbation of $\HH^-$.
\item $\dim (\HH^-, \diff \tau_* (\HH^-))=0$. 
\end{itemize}
A straightforward computation shows that 
$$\diff \tau_* (\q,\p)[\hh,\kk] = ( A^{-1}(t) \hh, B(t) \hh + A(t)^* \kk),$$
where 
\begin{align*}
A^{-1}(t) := A^{-1}(t ,\q(t)) &:= \diff \tau(t ,\q(t)) \in H^s (\T, GL(\R^n)),\\
B(t) := B(t,\q,\p) &:= (\diff (\diff \tau^{-*}(t,\q(t)) \cdot )\p(t) \in H^{1-s}(\T, \text{Hom}(\R^n,(\R^n)^*)).
\end{align*}
Therefore
\begin{align*}
\diff \tau_*(\q,\p)[\HH^-] &= \Big \{ \big (A^{-1} \hh, B\hh + A^{*} T \Delta^{s-1}\dot \hh \big ) \Big |\ \hh \in H^s_0(\T,\R^n)\Big \}\\
				&=\Big \{ \big ( \hh, BA\hh + TA^{-1} \Delta^{s-1}\dot{(A\hh)} \big ) \Big |\ \hh \in H^s_0(\T,\R^n)\Big \}
				\end{align*}
where we used the fact that $A^{-1}(t)\in GL(\R^n)$ for every $t\in \T$ and the identity $TA^{-1}=A^*T$. Also, for notational convenience, we dropped the dependence on $t$ everywhere. 

It follows that both $\HH^-$ and $\diff \tau_*(\q,\p)[\HH^-]$ are graph of a function $H^s_0(\T,\R^n)\to H^{1-s}(\T,(\R^n)^*)$, namely 
$$\hh \mapsto T\Delta^{s-1}\dot \hh \qquad \text{and}\qquad \hh\mapsto BA \hh + TA^{-1}\Delta^{s-1}\dot{(A\hh)}$$
respectively. As such, they are compact perturbation of each other if and only if the corresponding maps differ by a compact operator (add reference). The part $\hh \mapsto BA \hh$ is compact by Proposition~\ref{prop:multiplicationSobolev}, as for all $r\in (1/2,s]$ it factorizes as 
$$H^s_0 \hookrightarrow H^{r} \stackrel{BA}{\longrightarrow} H^{1-s}.$$ 
As the operator $T$ does not play any role as far as compactness is concerned, we are thus left with the operator 
$$\Delta^{s-1} - A^{-1}\Delta^{s-1}\dot{(A\hh)}$$
which after multiplying on both sides with $A$ yields the operator
$$[A,\Delta^{s-1} \circ \frac{\diff}{\diff t}] : H^s_0(\T,\R^n) \mapsto H^{s-1}(\T,\R^n).$$
The claim follows now from Proposition~\ref{prop:abbo}. As far as $(0)$-essentiality is concerned, we observe that both $\HH^-$ and $\diff \tau_*(\q,\p)[\HH^-]$ are in Fredholm pair with the vertical subspace 
$$\{0\}\times H^{1-s}(\T,\R^n)\subset 
H^s_0(\T,\R^n)\times H^{1-s}(\T,(\R^n)$$ and the Fredholm index is in both cases zero (this follows from the general fact that the graph of a function is always in Fredholm pair with the vertical subspace with Fredholm index zero). Therefore, applying Proposition~5.1 in \cite{AM:03} we obtain
$$\text{ind}\, (\HH^-, \{0\}\times H^{1-s}(\T,(\R^n)^*)) = \text{ind}\, (\diff \tau_*(\q,\p)[\HH^-], \{0\}\times H^{1-s}(\T,(\R^n)^*)) + \dim (\HH^-, \diff \tau_*(\q,\p)[\HH^-])$$
which implies that $\dim (\HH^-, \diff \tau_*(\q,\p)[\HH^-])=0$ as claimed. 

In order to prove the last assertion, we take any local parametrization $\varphi_*$ of $\M^{1-s}$ such that $\text{Im}\, \varphi_*$ contains the critical point $(\q,\p)$ of $\A_H$ and compute
\begin{align*}
\A_H \circ \varphi_* (\xi,\nu) &= \int_0^1 \big (\diff \varphi_t(\xi(t)\big )^{-*}[\nu(t)] \big (\partial_t \varphi_t(\xi(t)) + \diff \varphi_t(\xi(t))[\dot \xi (t)]\big )\, \diff t + \mathcal H(\varphi_\cdot(\xi),\diff \varphi_\cdot(\xi)^{-*}[\nu])\\
				&= \underbrace{\int_0^1 \nu(t) [\dot \xi(t)]\, \diff t}_{=: (1)} +\underbrace{\int_0^1 \nu(t) \big [\diff \varphi_t(\xi(t))^{-1}[\partial_t \varphi_t(\xi(t))]\big ]\, \diff t +\mathcal H(\varphi_\cdot(\xi),\diff \varphi_\cdot(\xi)^{-*}[\nu])}_{=:(2)}.
\end{align*}
Since the functional in (2) has compact differential (indeed, no derivatives of $\xi$ or $\nu$ appear), and since the negative eigenspace of the Hessian of the functional in (1) is precisely $\HH^-$, the claim follows. 
\end{proof}

As already observed, for our purposes we need to know more about $\mathcal E^s$ than integrability. What we need is indeed that $\mathcal E^s$ is strongly integrable. 
Following \cite[Page 335]{AM:05}, in order to show this, we
need to establish the following property: for any transition map $\tau_*=\psi_*^{-1} \circ \varphi_*$ and every $V\subset \varphi_*^{-1}(\text{Im}\, \varphi_* \cap \text{Im}\, \psi_*)$ bounded, $QV$ is pre-compact if and 
only if $Q \tau_*(V)$ is pre-compact. Clearly, if suffices to show the implication $QV$ pre-compact $\Rightarrow \ Q\tau_*(V)$ pre-compact, since the other implication follows replacing $\tau_*$ with $\tau_*^{-1}$. 
Notice that transition maps are of the form 
$$\tau_*(\q,\p) = \big ( \tau(\cdot, \q(\cdot)), \diff \tau (\cdot,\q(\cdot))^{-*} \p\big),\qquad \tau=\psi^{-1}\circ \varphi.$$
Thus, fix $\tau_*$ and consider a bounded set $V\subset \varphi_*^{-1}(\text{Im}\, \varphi_* \cap \text{Im}\, \psi_*)$ such that $QV$ is pre-compact. 

\begin{lem}
\label{lem:strong1}
The following statements are equivalent:
    \begin{enumerate}[i)]
\item $Q\tau_*(V)$ is pre-compact.
\item For every sequences $\{u_m\}\subset V$ and $\{v_m\}\subset \HH^+$ such that $v_m\rightharpoonup 0$ we have $\langle \tau_*(u_m),v_m \rangle_\HH \to 0.$
\end{enumerate}
\end{lem}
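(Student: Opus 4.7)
The plan is to reduce the statement to the following standard characterization of pre-compactness in a Hilbert space $K$: a bounded subset $A\subset K$ is pre-compact if and only if $\langle a_m,v_m\rangle_K\to 0$ for every sequence $\{a_m\}\subset A$ and every weakly null sequence $v_m\rightharpoonup 0$ in $K$. I would apply this to $K=\HH^+$ (a closed subspace of $\HH$, hence itself a Hilbert space) and $A:=Q\tau_*(V)\subset\HH^+$.

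First I would verify that $A$ is bounded. This follows from the explicit expression $\tau_*(\q,\p)=(\tau(\cdot,\q(\cdot)),\diff\tau(\cdot,\q(\cdot))^{-*}\p)$, the fact that, since $s>\tfrac12$, composition with smooth diffeomorphisms is bounded on bounded subsets of $H^s$, and Proposition~\ref{prop:multiplicationSobolev} (applied with $r=1-s\in[-s,s]$), which yields boundedness of the pointwise matrix-vector multiplication $H^s\times H^{1-s}\to H^{1-s}$.

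Next I would reformulate (ii) in terms of $A$. Since $Q$ is the self-adjoint orthogonal projection onto $\HH^+$, for every $v\in\HH^+$ and every $u\in V$ one has $\langle\tau_*(u),v\rangle_\HH=\langle Q\tau_*(u),v\rangle_\HH$. Since weak convergence in the closed subspace $\HH^+$ coincides with weak convergence in $\HH$, condition (ii) is then equivalent to the statement that $\langle a_m,v_m\rangle_\HH\to 0$ for every $\{a_m\}\subset A$ and every $v_m\rightharpoonup 0$ in $\HH^+$, which by the Hilbert-space characterization above is equivalent to pre-compactness of $A$, i.e.\ to (i).

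For completeness I would include a short proof of the characterization itself. For the direct implication, given any subsequence of $\{a_m\}$ pre-compactness of $A$ provides a further subsequence $a_{m_k}\to a_\infty$, whence $\langle a_{m_k},v_{m_k}\rangle_\HH=\langle a_{m_k}-a_\infty,v_{m_k}\rangle_\HH+\langle a_\infty,v_{m_k}\rangle_\HH\to 0$, using the uniform boundedness of weakly null sequences; a standard subsequence argument yields convergence to zero for the full sequence. For the converse, if $A$ were not pre-compact one could choose $a_m=Q\tau_*(u_m)\in A$ with no strongly convergent subsequence, extract a weakly convergent subsequence $a_m\rightharpoonup a_\infty\in\HH^+$ on which $\|a_m-a_\infty\|\geq\delta>0$, and set $v_m:=(a_m-a_\infty)/\|a_m-a_\infty\|\in\HH^+$; then $v_m\rightharpoonup 0$ while $\langle\tau_*(u_m),v_m\rangle_\HH=\|a_m-a_\infty\|+\langle a_\infty,v_m\rangle_\HH\geq\delta+o(1)$, contradicting (ii). The only step drawing on material specific to the paper is the boundedness of $\tau_*(V)$; the remainder is purely functional-analytic, so I do not expect any genuine obstacle in this lemma.
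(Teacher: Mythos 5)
Your proof is correct and follows essentially the same route as the paper: both rest on the identity $\langle \tau_*(u),v\rangle_\HH=\langle Q\tau_*(u),v\rangle_\HH$ for $v\in\HH^+$, the splitting trick for the forward direction, and for the converse the choice of the test sequence built from $Q\tau_*(u_m)-u$ after extracting a weak limit from the bounded sequence $Q\tau_*(u_m)$ (the paper argues directly that $\|Q\tau_*(u_m)\|_\HH\to\|u\|_\HH$, while you run the same computation as a contradiction with a normalized $v_m$; the two are interchangeable). Your explicit verification that $\tau_*(V)$, hence $Q\tau_*(V)$, is bounded via Proposition~\ref{prop:multiplicationSobolev} is a detail the paper uses implicitly, and is a welcome addition.
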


\begin{proof}
Assume that $Q\tau_*(V)$ is pre-compact, and let $(u_m)\subset V$ and $(v_m)\subset \HH^+$ be sequences such that $v_m\rightharpoonup 0$. Then, up to taking 
a subsequence we have that $Q\tau_*(u_m)\to u$ with $u\in \HH^+$, and hence
\begin{align*}
\langle \tau_*(u_m),v_m\rangle_\HH &= \langle Q\tau_* (u_m),v_m\rangle_\HH \\
 						&= \langle Q\tau_* (u_m)-u,v_m\rangle_\HH+ \langle u,v_m\rangle_\HH\\
						&\leq \|Q \tau_*(u_m)-u\|_\HH \cdot \|v_m\|_\HH + \langle u,v_m\rangle_\HH\\
						&\to 0.\end{align*}
Conversely, assume that ii) holds. In order to show that $Q\tau_* (V)$ is pre-compact, we need to show that for every $(u_m)\subset V$ the sequence $Q\tau_*(u_m)$ contains a strongly converging 
subsequence. Thus, let $(u_m)\subset V$  be any sequence. Since $Q\tau_*(u_m)$ is bounded, we have that up to a subsequence $Q\tau_* (u_m) \rightharpoonup u$ for some $u\in \HH^+$. Therefore, 
choosing $v_m:= Q\tau_* (u_m)-u\rightharpoonup 0$ we obtain 
$$o(1) = \langle \tau_*(u_m), Q\tau_*(u_m) - u \rangle_\HH = \|Q\tau_*(u_m)\|^2_\HH - \underbrace{\langle \tau_*(u_m),u\rangle_\HH}_{\to \|u\|^2_\HH},$$
from which we deduce that $\|Q\tau_*(u_m)\|_\HH\to \|u\|_\HH.$ This readily shows that $Q\tau_*(u_m)\to u$.
\end{proof}

Thus, let $\{u_m\}\subset V$ any sequence. Up to passing to a subsequence we have 
$$u_m = (\q_m,\p_m)= (\q_m^-,T\Delta^{s-1}\dot \q_m^-) + (\q_m^+,-T\Delta^{s-1}\dot \q_m^+) + (\q_m^0,\p_m^0)\in \HH^-\oplus \HH^+\oplus \HH^0$$
with $\{\q_m^-\}$ weakly-converging in $H^s$, $\{\q_m^+\}$ strongly converging in $H^s$, and $\{(\q_m^0,\p_m^0)\}$ strongly converging in $\HH^0$. 
For any fixed sequence $\{v_m\}\subset \HH^+$ with $v_m\rightharpoonup 0$ we want to show that 
$$\langle \tau_*(u_m),v_m\rangle_{\HH} \to 0.$$

We notice that $\{\Delta^{s-1}\dot \q_m^+\}$ and $\{\diff \tau(\q_m)\}$ are strongly converging in $H^{1-s}$ by assumption, and that $\{[\diff \tau(\q_m),\Delta^{s-1}]\dot \q_m\}$ is strongly converging in $H^{1-s}$ by Remark~\ref{rmk:easiercommutator}. Further, we write 
$$\tau(\q_m) = \tilde \tau(\q_m) + \tau(\q_m)^0,$$
where $\tau(\q_m)^0:= \int_0^1 \tau(\q_m)\, \diff t$, and observe that $\tau(\q_m)^0$ converges in $\R^n$. Therefore, 
denoting with ``$\cong$'' any equality up to quantities which are strongly converging in $\HH$ (recall that this means strong convergence in $H^s$ resp. $H^{1-s}$ when the corresponding quantity appears 
in the first resp. second factor), we compute
\begin{align*}
\tau_*(u_m) &= ( \tau(\q_m), T \diff \tau(\q_m)[\Delta^{s-1}\dot \q_m^- - \Delta^{s-1}\dot \q_m^+] + \diff \tau(\q_m)^{-*}\p_m^0)\\
		&= ( \tau(\q_m), T \diff \tau(\q_m)[\Delta^{s-1}\dot \q_m] ) - 2\cdot  (0,T\diff \tau(\q_m)[\Delta^{s-1}\dot \q_m^+]) + (0,\diff \tau(\q_m)^{-*}\p_m^0)\\
		&\cong ( \tau(\q_m), T \diff \tau(\q_m)[\Delta^{s-1}\dot \q_m] )\\
		&= ( \tilde \tau(\q_m), T \diff \tau(\q_m)[\Delta^{s-1}\dot \q_m] ) + (\tau(\q_m)^0,0)\\
		&\cong ( \tilde \tau(\q_m), T \diff \tau(\q_m)[\Delta^{s-1}\dot \q_m] )\\
		&=( \tilde \tau(\q_m), T \Delta^{s-1} \diff \tau(\q_m)[\dot \q_m]) + (0, T [\diff \tau(\q_m),\Delta^{s-1}]\dot \q_m]) \\
		&\cong ( \tilde \tau(\q_m), T \Delta^{s-1} \diff \tau(\q_m)[\dot \q_m])\\
		&= ( \tilde \tau(\q_m), T \Delta^{s-1} \diff/\diff t [\tilde \tau(\q_m)]) \in \HH^-.
\end{align*}
In other words, $\tau_*(u_m)$ differs from an element in $\HH^-$ only up to a quantity which strongly converges in $\HH$, and this yields 
$$\langle \tau_* (u_m),v_m\rangle_\HH = o(1) +\underbrace{\langle ( \tilde \tau(\q_m), T \Delta^{s-1} \diff/\diff t [\tilde \tau(\q_m)]), v_m\rangle_\HH}_{=0} = o(1)$$
as claimed. Summarizing, we have proved the following
\begin{thm}[Strong integrability]
For every $s\in (1/2,1)$, $\mathcal E^s$ defines a strongly integrable (0)-essential subbundle of $T\M^{1-s}$ with the property that the negative eigenspace $\HH^u(\q,\p)$ of the Hessian 
of $\A_H:\M^{1-s}\to \R$ at every critical point $(\q,\p)$ belongs to the essential class $\mathcal E^s(\q,\p)$. \qed
\label{thm:strong2}
\end{thm}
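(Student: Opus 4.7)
My plan is to prove the theorem by combining Proposition~\ref{prop:integrable} (which already gives the $(0)$-essentiality and the statement about $\HH^u(\q,\p)$) with the criterion for strong integrability, so the only thing left is to verify the pre-compactness condition for transition maps. Concretely, given any transition map $\tau_*$ and any bounded $V\subset \mathrm{dom}(\tau_*)$ with $QV$ pre-compact, I need to show that $Q\tau_*(V)$ is pre-compact (the reverse implication following by replacing $\tau_*$ with $\tau_*^{-1}$). By Lemma~\ref{lem:strong1}, this reduces to the purely analytic statement: for every sequence $\{u_m\}\subset V$ and every $\{v_m\}\subset \HH^+$ with $v_m \rightharpoonup 0$, one has $\langle \tau_*(u_m), v_m\rangle_\HH \to 0$.

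Next I would exploit the explicit spectral decomposition $\HH = \HH^-\oplus \HH^+\oplus \HH^0$ from~\eqref{eq:H-}. Writing $u_m = (\q_m^-,T\Delta^{s-1}\dot\q_m^-) + (\q_m^+,-T\Delta^{s-1}\dot\q_m^+) + (\q_m^0,\p_m^0)$, the pre-compactness of $QV$ gives (after extracting a subsequence) strong convergence of $\{\q_m^+\}$ in $H^s$ and of $\{(\q_m^0,\p_m^0)\}$ in the finite-dimensional space $\HH^0$, while only weak convergence of $\{\q_m^-\}$ in $H^s$ is guaranteed. The strategy is to compute $\tau_*(u_m) = (\tau(\q_m), T\diff\tau(\q_m)[\Delta^{s-1}\dot\q_m^--\Delta^{s-1}\dot\q_m^+]+\diff\tau(\q_m)^{-*}\p_m^0)$ and show that, modulo terms that converge strongly in $\HH$, it lies in $\HH^-$; once that is done the inner product with $v_m\in \HH^+$ is $o(1)$ by orthogonality of $\HH^\pm$ plus the standard strong-weak pairing argument.

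The main technical step, and the place where the preliminary estimates of Section~2 are decisive, is controlling the fiber term. First, the summand involving $\Delta^{s-1}\dot\q_m^+$ and $\p_m^0$ converges strongly in $H^{1-s}$ and can be discarded. Second, I would split $\diff\tau(\q_m)\Delta^{s-1}\dot\q_m = \Delta^{s-1}\frac{\diff}{\diff t}[\diff\tau(\q_m)\cdot \q_m] + [\diff\tau(\q_m),\Delta^{s-1}\circ \frac{\diff}{\diff t}]\q_m$, which is legitimate up to the appropriate chain-rule identity. Remark~\ref{rmk:easiercommutator} (itself a consequence of Proposition~\ref{prop:abbo} together with Corollary~\ref{cor:multiplications-1}) shows that the commutator operator, applied to the weakly convergent sequence $\{\q_m\}$, produces a strongly convergent sequence in $H^{1-s}$. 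Finally, I would subtract the mean of $\tau(\q_m)$ (which converges in $\R^n$, hence strongly in $H^s$) so that the leading part of $\tau_*(u_m)$ becomes $(\tilde\tau(\q_m), T\Delta^{s-1}\frac{\diff}{\diff t}\tilde\tau(\q_m))$, which is precisely an element of $\HH^-$.

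The hard part, as anticipated, is this commutator estimate: verifying that $[A,\Delta^{s-1}\circ \diff/\diff t]$ is a compact operator when $A\in H^s$ only has the same Sobolev regularity as the base direction. Without the fine Fourier-side bound of Lemma~\ref{lem:abbo2} and Proposition~\ref{prop:abbo}, the cross-term in the fiber coordinate would only be bounded, and the rest of the argument would collapse. Once that compactness is in hand, the chain $\tau_*(u_m) \cong (\tilde\tau(\q_m), T\Delta^{s-1}\frac{\diff}{\diff t}\tilde\tau(\q_m))\in \HH^-$ holds modulo strongly convergent remainders, and pairing against $v_m\rightharpoonup 0$ in $\HH^+$ yields $\langle \tau_*(u_m),v_m\rangle_\HH = o(1)$, completing the verification of strong integrability.
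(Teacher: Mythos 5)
Your overall architecture is exactly the paper's: reduce to the sequential criterion of Lemma~\ref{lem:strong1}, decompose $u_m$ along $\HH^-\oplus\HH^+\oplus\HH^0$ with precisely the convergences you state, show that $\tau_*(u_m)$ agrees with an element of $\HH^-$ up to strongly convergent remainders, and conclude by orthogonality against $v_m\rightharpoonup 0$; the $(0)$-essentiality and the statement about $\HH^u(\q,\p)$ are indeed already in Proposition~\ref{prop:integrable}. The gap sits in the step you label ``the appropriate chain-rule identity''. With your splitting
\[
\diff\tau(\q_m)\,\Delta^{s-1}\dot\q_m=\Delta^{s-1}\tfrac{\diff}{\diff t}\big[\diff\tau(\q_m)\,\q_m\big]+[\diff\tau(\q_m),\Delta^{s-1}\circ\tfrac{\diff}{\diff t}]\,\q_m,
\]
the leading term involves $\diff\tau(\q_m)\,\q_m$, which is not $\tau(\q_m)$, so the pair $\big(\tilde\tau(\q_m),\,T\Delta^{s-1}\tfrac{\diff}{\diff t}[\diff\tau(\q_m)\q_m]\big)$ is not of the graph form \eqref{eq:H-} and does not lie in $\HH^-$. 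Bridging it to $T\Delta^{s-1}\tfrac{\diff}{\diff t}[\tilde\tau(\q_m)]$ requires strong $H^{1-s}$-convergence of $\Delta^{s-1}\big[\tfrac{\diff}{\diff t}(\diff\tau(\q_m))\,\q_m-\partial_t\tau(\q_m)\big]$, whose dangerous piece is $\Delta^{s-1}\big[\diff^2\tau(\q_m)[\dot\q_m,\q_m]\big]$. This is not a chain-rule identity but an extra compactness claim, and none of the results you cite provides it: here the factor converging only weakly is the rough one, $\dot\q_m\in H^{s-1}$, multiplied by a matrix merely bounded in $H^s$, and multiplication by a fixed $H^s$ matrix is a bounded but non-compact operator on $H^{s-1}$ (think of the identity matrix), so the fixed-matrix part of this product converges only weakly. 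Corollary~\ref{cor:multiplications-1} compactifies the opposite configuration (fixed rough factor, weakly convergent smooth factor), and Remark~\ref{rmk:easiercommutator} concerns $[A,\Delta^{s-1}]\dot u$, not the term above; the sequence statement for the full commutator you use is the remark following Proposition~\ref{prop:abbo}, which in any case does not cover the leftover term.

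The repair is the paper's choice of splitting, i.e.\ the identity of Remark~\ref{rmk:easiercommutator} used the other way around: write $\diff\tau(\q_m)\,\Delta^{s-1}\dot\q_m=\Delta^{s-1}\big[\diff\tau(\q_m)\,\dot\q_m\big]+[\diff\tau(\q_m),\Delta^{s-1}]\,\dot\q_m$. The genuine chain rule gives $\diff\tau(\q_m)\,\dot\q_m=\tfrac{\diff}{\diff t}\tau(\q_m)-\partial_t\tau(\q_m)$, and $\partial_t\tau(\q_m)$ converges strongly in $H^{1-s}$; after subtracting the (convergent) mean of $\tau(\q_m)$, the leading term is literally $T\Delta^{s-1}\tfrac{\diff}{\diff t}[\tilde\tau(\q_m)]$, so the pair $\big(\tilde\tau(\q_m),T\Delta^{s-1}\tfrac{\diff}{\diff t}[\tilde\tau(\q_m)]\big)$ belongs to $\HH^-$ on the nose, while the entire correction is the commutator $[\diff\tau(\q_m),\Delta^{s-1}]\dot\q_m$, which is exactly what Remark~\ref{rmk:easiercommutator} (built on Proposition~\ref{prop:abbo} and Corollary~\ref{cor:multiplications-1}) makes strongly convergent for $\{\diff\tau(\q_m)\}$ bounded in $H^s$ and $\{\q_m\}$ weakly convergent. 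With that substitution the rest of your argument goes through and coincides with the paper's proof.
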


\section{Pre-compactness of intersection between stable and unstable manifold}  
\label{s:precompactness}


Goal of this section will be to construct a negative pseudo-gradient $F$ for $\A_H$ such that the pair $(\A_H,F)$ satisfies the Palais-Smale condition and 
which preserves essentially vertical sets for the (0)-essential subbundle $\mathcal E^s$ constructed in Section~\ref{s:essentialsubbundle}. In virtue of \cite[Theorem 6.5]{AM:05}, this implies that the intersection between the stable and unstable manifold 
of any two critical points of $\A_H$ is a pre-compact finite dimensional manifold of dimension equal the difference of the relative Morse indices. 
To achieve this, we will first prove in Section~\ref{s:abstractvertical} an abstract result stating that the space of vector fields that preserve an essential vertical family 
 is a module over the space of bounded Lipschitz functions. In Section~\ref{s:pseudogradient} we will then employ this abstract result to construct $F$
first in a fixed local chart and then glue all local definitions by means of cut-off functions. This construction will be carried out in such a way that the Palais-Smale condition for 
the pair $(\A_H,F)$ follows from the ``metric'' Palais-Smale condition, see Proposition~\ref{lem:compactness}.
The vector field $F$ will be then perturbed in Section~\ref{s:transversality} within the class of vector fields preserving essentially vertical sets in order to achieve transversality.

\subsection{The space of vector fields that preserve an essentially vertical family}
\label{s:abstractvertical}

In this section we will prove an abstract result about the space of vector fields whose flow preserves a given essentially vertical family. Thus, let $\M$ be a Hilbert manifold
admitting a close embedding into a Hilbert space. We endow $\M$ with the induced distance. We further assume that 
$\mathcal F$ be an essentially vertical family for the strongly integrable essential subbundle $\mathcal E\subset T\M$. 
For a bounded Lipschitz vector field $F$ on $\M$, the essentially vertical family $\mathcal F$ is said to be $F$-\textit{positively 
invariant} if for every $A\in \mathcal F$ and every $T\geq 0$ the set $\phi^F([-T,T]\times A)$ is in $\mathcal F$. 

\begin{prop}
The space of bounded Lipschitz vector fields $F$ on $\M$ of class $C^k$ for which $\mathcal F$ is $F$-positively invariant is a module over the ring of bounded Lipschitz functions of class $C^k$ on $\M$. 
\label{prop:module}
\end{prop}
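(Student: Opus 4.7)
The plan is to verify the two nontrivial module operations — closure under addition and under multiplication by a bounded Lipschitz $C^k$ scalar function — since containment of the zero vector field is obvious, and closure under negation follows from the identity $\phi^{-F}([-T,T] \times A) = \phi^F([-T,T] \times A)$.

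\textbf{Scalar multiplication.} Let $F$ belong to the space, let $f$ be bounded Lipschitz $C^k$, and observe that $fF$ inherits the same regularity. The trajectories of $fF$ are time-reparametrizations of those of $F$: one has $\phi^{fF}_t(x) = \phi^F_{\sigma(t,x)}(x)$ with $\sigma(t,x) = \int_0^t f(\phi^{fF}_s(x))\,\diff s$, so $|\sigma(t,x)| \leq \|f\|_\infty |t|$. Hence
$$\phi^{fF}([-T,T] \times A) \;\subset\; \phi^F\bigl([-\|f\|_\infty T,\, \|f\|_\infty T] \times A\bigr),$$
and the right-hand side belongs to $\mathcal F$ by $F$-positive invariance. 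The ideal property (i) of Definition~\ref{dfn:evf} then gives $\phi^{fF}([-T,T] \times A) \in \mathcal F$.

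\textbf{Addition.} Given $F_1, F_2$ both preserving $\mathcal F$, the approach is a Lie-Trotter product splitting. Set $\Psi_\tau := \phi^{F_2}_\tau \circ \phi^{F_1}_\tau$. First, $\Psi_\tau(A) \in \mathcal F$ for $A \in \mathcal F$: indeed $\phi^{F_1}_\tau(A) \subset \phi^{F_1}([-|\tau|, |\tau|] \times A) \in \mathcal F$ by $F_1$-invariance, and then $\phi^{F_2}_\tau$ sends the result into $\phi^{F_2}([-|\tau|,|\tau|] \times \phi^{F_1}([-|\tau|,|\tau|] \times A)) \in \mathcal F$ by $F_2$-invariance combined with (i). Iterating, $\Psi_\tau^k(A) \in \mathcal F$ for every $k \in \N$, so by (i) the finite union
$$B_N := \bigcup_{k=0}^N \Psi_{T/N}^k(A)$$
lies in $\mathcal F$. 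A standard Gronwall estimate, carried out in the ambient Hilbert space via the close embedding of $\M$, yields uniform convergence
$$\sup_{x \in A,\, t \in [0, T]}\, \dist\bigl(\phi^{F_1+F_2}_t(x),\, \Psi_{T/N}^{\lfloor tN/T \rfloor}(x)\bigr) \longrightarrow 0 \quad \text{as } N \to \infty,$$
because $F_1$ and $F_2$ are globally bounded and Lipschitz (the extra term $\|F_1 + F_2\|_\infty \cdot T/N$ absorbs the discrepancy between $t$ and $\lfloor tN/T \rfloor \cdot T/N$). Therefore $\phi^{F_1+F_2}([0,T] \times A)$ lies in an $\epsilon_N$-neighborhood of $B_N$ with $\epsilon_N \to 0$, and the Hausdorff-closure property (iii) of Definition~\ref{dfn:evf} yields $\phi^{F_1+F_2}([0,T] \times A) \in \mathcal F$. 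The same scheme run for $t \in [-T, 0]$ (equivalently, replacing $\tau$ with $-\tau$ in the splitting) finishes the proof of $(F_1+F_2)$-positive invariance.

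The only delicate point is the Lie-Trotter convergence in the manifold setting, but this is routine because the vector fields are globally bounded and Lipschitz and $\M$ embeds closedly in a Hilbert space, so the estimate can be read off from the analogous statement in the ambient linear space. It is precisely for this passage from the discrete finite-union approximation $B_N$ to the continuous orbit that condition (iii) of Definition~\ref{dfn:evf} is required.
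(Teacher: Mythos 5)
Your proof is correct and follows essentially the same route as the paper: the scalar-multiplication step (time reparametrization giving $\phi^{fF}([-T,T]\times A)\subset\phi^{F}([-\|f\|_\infty T,\|f\|_\infty T]\times A)$) is literally the paper's argument, and for sums the paper likewise approximates the flow of the sum by concatenations of the two flows on small time intervals, uses the ideal property of $\mathcal F$ at each finite stage, and invokes the Hausdorff-stability condition (iii) of Definition~\ref{dfn:evf} to pass to the limit. The only difference is in packaging: you establish the convergence by a discrete Lie--Trotter splitting with a Gronwall estimate, whereas the paper formulates the same splitting as a continuous-time homogenization statement (Theorem~\ref{thm:homogenization}) for the alternating non-autonomous field $Z_n$, proved via equicontinuity and weak-$*$ convergence of measures.
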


For the proof we will need the following homogenization result, which is certainly known to the experts, but which will be included here for completeness. 

\begin{thm}
Let $E$ be a real Banach space and let $X,Y$ be globally Lipschitz vector fields on $E$. Set, for $n\in \Z_+$, 
$$J_n := \bigcup_{k\in \Z} \Big [ \frac{2k}{n}, \frac{2k+1}{n}\Big ), \qquad Z_n(t,x) := \mathbb I_{J_n} (t) X(x) + \mathbb I_{J_n^c}(t) Y(x).$$
Then the flow $\phi_n:\R\times E\to E$ of $Z_n$ converges to the flow $\phi:\R\times E \to E$ of $((X+Y)/2$ uniformly on $[-T,T]\times E$, for every $T>0$.
\label{thm:homogenization}
\end{thm}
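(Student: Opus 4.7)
The plan is to reduce the statement to a Grönwall estimate for the difference $w_n(t,x) := \phi_n(t,x) - \phi(t,x)$, with the key input being that the ``drift'' of $Z_n$ away from its time-average $(X+Y)/2$ has an antiderivative of order $1/n$. Writing $u_n(t) = \phi_n(t,x)$, $u(t) = \phi(t,x)$, and subtracting the integral equations gives
\begin{equation*}
u_n(t) - u(t) = \int_0^t \bigl[ Z_n(s, u_n(s)) - Z_n(s, u(s)) \bigr] \, ds + \int_0^t \bigl[ Z_n(s, u(s)) - \tfrac{1}{2}(X+Y)(u(s)) \bigr] \, ds.
\end{equation*}
The first integral is controlled by $L \int_0^t \|u_n(s) - u(s)\|\, ds$, where $L$ is a common Lipschitz constant of $X$ and $Y$.

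The second integral is the oscillating one. Introducing $\varepsilon_n(s) := \mathbb{I}_{J_n}(s) - \mathbb{I}_{J_n^c}(s) \in \{\pm 1\}$ and $V(s) := \tfrac{1}{2}\bigl(X(u(s)) - Y(u(s))\bigr)$, the integrand equals $\varepsilon_n(s) V(s)$. The antiderivative $E_n(s) := \int_0^s \varepsilon_n(r)\, dr$ satisfies $\|E_n\|_\infty \leq 1/n$, because $\varepsilon_n$ has mean zero on each period $[2k/n, (2k+2)/n)$. Since $u$ solves an ODE driven by the Lipschitz field $(X+Y)/2$, the curve $u$ is Lipschitz in $s$, hence so is $V$; viewing $V$ as a function of bounded variation with values in $E$, a Stieltjes integration-by-parts yields
\begin{equation*}
\Bigl\| \int_0^t \varepsilon_n(s) V(s)\, ds \Bigr\|
= \Bigl\| E_n(t) V(t) - \int_0^t E_n(s)\, dV(s) \Bigr\|
\leq \frac{1}{n}\bigl( \|V(t)\| + \mathrm{TV}_{[0,t]}(V)\bigr),
\end{equation*}
and the total variation is dominated by the Lipschitz constant of $V$ on $[0,t]$. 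Justifying this identity for merely Lipschitz $V$ is done by approximation: replace $V$ by a piecewise-linear regularization, apply the standard formula, and pass to the limit using that both sides depend continuously on $V$ in the $C^0 \cap BV$ topology.

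Plugging this back into the first display and applying the Grönwall lemma gives
\begin{equation*}
\|u_n(t) - u(t)\| \leq \frac{C(t,x)}{n}\, e^{L|t|}, \qquad t \in [-T,T],
\end{equation*}
where $C(t,x)$ depends only on $\sup_{s \in [-T,T]} \|V(s)\|$ and the Lipschitz constant of $s \mapsto V(s)$, which in turn are controlled by $\sup_{s \in [-T,T]} \|X(u(s))\| + \|Y(u(s))\|$ and by $L$. The case $t < 0$ is handled by reversing time (the homogenization argument is symmetric in the role of $X$ and $Y$).

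The main obstacle is the uniformity in $x \in E$: the quantity $C(t,x)$ a priori grows with $\|u(s)\|$, which can in turn grow with $\|x\|$ via Grönwall on the ODE itself. The way around this is to use that in our setting $X$ and $Y$ will have been constructed as globally bounded (cut-off) pseudo-gradient vector fields, so that $\|X\|_\infty + \|Y\|_\infty < \infty$ controls $\|V\|_\infty$ and $\mathrm{Lip}(V)$ uniformly in $x$; thus $C(t,x)$ can be chosen independently of $x$ and we obtain the required uniform convergence on $[-T,T] \times E$. A secondary technical point is the Stieltjes integration-by-parts in a general Banach space, which we handle by the piecewise-linear approximation sketched above; everything reduces to one-dimensional Riemann--Stieltjes integrals against the scalar function $E_n$.
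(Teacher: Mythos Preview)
Your approach is correct and genuinely different from the paper's. The paper does not compare $\phi_n$ directly to the limit flow $\phi$; instead it proves an abstract lemma (Lemma~\ref{lem:homogenization}: for any equi-bounded, equi-continuous family $\mathfrak U\subset C^0([a,b],E)$ and any sequence of Radon measures $\mu_n\to 0$ weak-$*$, one has $\int u\,\diff\mu_n\to 0$ uniformly in $u\in\mathfrak U$, reduced to the scalar case via norming functionals and Arzel\`a--Ascoli), uses this to show that $(\phi_n(t,x))_n$ is Cauchy for each $(t,x)$, and only then identifies the limit. Your integration-by-parts with the explicit primitive $E_n$, $\|E_n\|_\infty\le 1/n$, is more elementary and delivers the quantitative rate $O(1/n)$, which the paper's soft argument does not. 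The Stieltjes step is sound via the piecewise-linear regularisation you sketch: only the inequality is needed, and it passes to the limit since the total variation of the interpolant is dominated by that of $V$, which in turn is at most $\mathrm{Lip}(V)\cdot|t|$.

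On the uniformity in $x\in E$: you are right to flag this, and your resolution --- importing boundedness of $X,Y$ from the intended application --- is the honest one. In fact the statement is \emph{false} for merely globally Lipschitz $X,Y$: take $E=\R$, $X(x)=x$, $Y(x)=-x$; then $(X+Y)/2=0$, so $\phi(t,x)=x$, while $\phi_n(t,x)=x\,\exp E_n(t)$, and $\sup_{x}|\phi_n(1/n,x)-x|=\sup_x |x|\,(e^{1/n}-1)=+\infty$. The paper's proof glosses over the same point (the passage from pointwise Cauchy plus equi-Lipschitz in $x$ to convergence uniform on $[-T,T]\times E$ does not follow without a uniform bound on $(X-Y)\circ\phi_n(\cdot,x)$); but its only use is in Proposition~\ref{prop:module}, which is stated for \emph{bounded} Lipschitz vector fields, and under that extra hypothesis both arguments go through cleanly.
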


To prove Theorem~\ref{thm:homogenization} we will use the following

\begin{lem}
Let $\mathfrak U\subset C^0([a,b],E)$ be an equi-bounded and equi-continuous family of $E$-valued curves. Let $(\mu_n)$ be a sequence of real valued Radon measures on $[a,b]$ 
which converges to zero in the weak$\text{-}^*$topology of $C^0([a,b],E)^*$. Then the convergence 
$$\lim_{n\to +\infty} \int_{[a,b]} u \, \diff \mu_n = 0 \quad \text{in}\ E$$
is uniform in $u\in \mathfrak U$.  
\label{lem:homogenization}
\end{lem}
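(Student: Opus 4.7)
The plan is to combine pre-compactness of $\mathfrak U$ (Arzelà--Ascoli) with the uniform boundedness of the total variations $\|\mu_n\|$, and then run a standard $\varepsilon/3$-argument. First, I would interpret the hypothesis: for every $u\in C^0([a,b],E)$ one has $\int_{[a,b]}u\,\diff\mu_n\to 0$ in $E$; indeed, the natural pairing of a scalar Radon measure with an $E$-valued continuous function is a Bochner integral, and for fixed $u$ one may test against any bounded linear functional $\xi\in E^*$ so that $\xi\bigl(\int u\,\diff\mu_n\bigr)=\int \xi\circ u\,\diff\mu_n\to 0$, after which norm convergence in $E$ follows from weak-$*$ convergence against the scalar function $t\mapsto \|u(t)\|$ combined with a Hahn--Banach dualization argument (or more directly from the stated hypothesis that the $\mu_n$ converge to zero in the weak-$*$ topology of $C^0([a,b],E)^*$ under the natural isometric embedding).

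Next, by Banach--Steinhaus applied to the weak-$*$ convergent sequence $(\mu_n)$, there exists a constant $M>0$ such that the total variations satisfy $\|\mu_n\|\le M$ for all $n$. Simultaneously, since $\mathfrak U$ is equi-bounded and equi-continuous, Arzelà--Ascoli yields that its closure $\overline{\mathfrak U}\subset C^0([a,b],E)$ is compact in the uniform topology. Fix $\varepsilon>0$ and cover $\overline{\mathfrak U}$ by finitely many balls of radius $\varepsilon/(2M)$ centered at $u_1,\dots,u_N\in \overline{\mathfrak U}$. Pick $n_0$ so large that $\bigl\|\int_{[a,b]} u_i\,\diff\mu_n\bigr\|_E<\varepsilon/2$ for every $i=1,\dots,N$ and every $n\ge n_0$; this is possible because there are only finitely many $u_i$'s and the pointwise convergence from the previous paragraph applies to each of them.

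Then for any $u\in\mathfrak U$ and any $n\ge n_0$, choose $i$ with $\|u-u_i\|_\infty<\varepsilon/(2M)$ and estimate
\begin{equation*}
\Bigl\|\int_{[a,b]} u\,\diff\mu_n\Bigr\|_E
\le \Bigl\|\int_{[a,b]} (u-u_i)\,\diff\mu_n\Bigr\|_E+\Bigl\|\int_{[a,b]} u_i\,\diff\mu_n\Bigr\|_E
\le \|u-u_i\|_\infty\,\|\mu_n\|+\tfrac{\varepsilon}{2}
<\varepsilon,
\end{equation*}
which establishes the desired uniform convergence. The main obstacle I anticipate is purely bookkeeping: making rigorous the sense in which the weak-$*$ convergence of scalar measures entails vector-valued convergence of Bochner integrals, so that the pointwise statement on the finite net $\{u_1,\dots,u_N\}$ is actually available; once that is in place, the $\varepsilon/3$-argument above closes immediately.
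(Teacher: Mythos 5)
There is a genuine gap, and it sits exactly at the step your whole argument rests on: the application of Arzel\`a--Ascoli to $\mathfrak U$. When $E$ is infinite dimensional (which is the only case of interest here --- the lemma is applied with $E$ a Banach/Hilbert space in the homogenization theorem), equi-boundedness and equi-continuity do \emph{not} imply that $\overline{\mathfrak U}$ is compact, or even totally bounded, in $C^0([a,b],E)$; the vector-valued Arzel\`a--Ascoli theorem additionally requires the sets $\{u(t):u\in\mathfrak U\}$ to be relatively compact in $E$ for each $t$. A family of constant curves $u_k\equiv e_k$ running through an orthonormal sequence of a Hilbert space is equi-bounded and equi-continuous but has no uniformly convergent subsequence, so your finite $\varepsilon/(2M)$-net of $\overline{\mathfrak U}$ simply does not exist. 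Since the raison d'\^etre of the lemma is precisely to obtain uniformity over a family that is \emph{not} pre-compact, the proposal proves only the (as the paper puts it, trivially true) finite-dimensional case. A secondary, fixable issue: your justification of the pointwise convergence $\int u\,\diff\mu_n\to0$ in norm for a single $u$ (testing against $t\mapsto\|u(t)\|$) does not work as stated, since $\|\int u\,\diff\mu_n\|$ is controlled by the total variation $|\mu_n|$, not by $\mu_n$ itself; the correct route is that a single continuous $u$ has compact range and can be uniformly approximated by curves with values in a finite-dimensional subspace.

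The paper's proof avoids the compactness problem by reducing to the scalar case: for each pair $(n,u)$ choose a norming functional $\varphi_{n,u}\in E^*$, $\|\varphi_{n,u}\|\le1$, with $\langle\varphi_{n,u},\int u\,\diff\mu_n\rangle=\|\int u\,\diff\mu_n\|$. The scalar family $\{\langle\varphi_{n,u},u\rangle\}_{n,u}\subset C^0([a,b],\R)$ \emph{is} equi-bounded and equi-continuous, hence pre-compact by the classical Arzel\`a--Ascoli theorem, and on that compact scalar family your Banach--Steinhaus plus $\varepsilon$-net argument is exactly what closes the proof. So the mechanism you propose is sound, but it must be applied after this Hahn--Banach reduction, not directly to the $E$-valued family.
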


\begin{proof}
The statement would be trivially true if $E$ were finite dimensional, because in this case the family $\mathfrak U$ would be pre-compact in $C^0([a,b],E)$. The general case can be 
reduced to the scalar case by choosing $\varphi_{n,u}$ in $E^*$ such that $\|\varphi_{n,u}\|\leq 1$ and 
$$\langle \varphi_{n,u}, \int_{[a,b]} u\, \diff \mu_n \rangle = \left \| \int_{[a,b]} u\, \diff \mu_n \right \|.$$
Indeed, with such a choice the family 
$$\left \{ \langle \varphi_{n,u},u\rangle \right \}_{n\in \N, \ u\in\mathfrak U} \subset C^0([a,b],\R)$$
is equi-bounded and equi-continuous, thus pre-compact by the Arzela-Ascoli theorem. Hence, 
$$\lim_{n\to +\infty} \int_{[a,b]} \langle \varphi_{n,u},u\rangle \, \diff \mu_n =0,$$
uniformly in $u\in\mathfrak U$. The conclusion follows from the identity 
\begin{equation*}
 \left \| \int_{[a,b]} u\, \diff \mu_n \right \| = \langle \varphi_{n,u}, \int_{[a,b]} u\, \diff \mu_n \rangle = \int_{[a,b]}  \langle \varphi_{n,u},u\rangle \, \diff \mu_n. \qedhere
 \end{equation*}
\end{proof}

\begin{proof}[Proof of Theorem~\ref{thm:homogenization}]
Observe that, if $X,Y$ are $\kappa$-Lipschitz, then $Z_n(t,\cdot)$ is $\kappa$-Lipschitz with respect to the second variable for every $t\in \R$. 

\vspace{2mm}

\textbf{Claim 1.} $\phi_n(t,\cdot)$ is equi-Lipschitz with respect to the second variable, uniformly in $t\in [-T,T]$. 

\vspace{2mm}

\noindent In fact, since $Z_n$ is $\kappa$-Lipschitz with respect to the second variable,
\begin{align*}
\|\phi_n(t,x)-\phi_n(t,y)\| &\leq \Big \| x-y +\int_0^t \big (Z_n(s,\phi(s,x)- Z_n(s,\phi(s,y))\big )\, \diff s \Big \|\\
				&\leq \|x-y\| + \kappa \, \Big |\int_0^t \|\phi(s,x)-\phi(s,y)\| \,\diff s \Big |,
\end{align*}
and by the Gronwall's lemma we find 
$$\|\phi_n(t,x)-\phi_n(t,y)\|\leq e^{\kappa |t|} \|x-y\|,$$
for every $t\in \R$ and every $x,y\in E$. 

\vspace{2mm}

\textbf{Claim 2.} If the family of curves $\mathfrak U\subset C^0([a,b],E)$ is equi-bounded and equi-continuous, then 
$$\lim_{n\to +\infty} \int_a^b \Big (Z_n(s,u(s))- \frac{X+Y}{2}(u(s))\Big ) \, \diff s =0\quad \text{in}\ E,$$

uniformly in $u\in\mathfrak U$. 

\vspace{2mm}

Indeed, we have the identity 
$$ \int_a^b \Big (Z_n(s,u(s))- \frac{X+Y}{2}(u(s))\Big ) \, \diff s = \int_a^b \Big (\mathbb I_{J_n}(s) - \frac 12 \Big ) (X-Y) (u(s)) \, \diff s.$$
By Claim 1, the family of curves 
$$\{(X-Y)\circ u \}_{u\in\mathfrak U}\subset C^0([a,b],E)$$
is equi-bounded and equi-continuous. Moreover, $\mathbb I_{J_n}$ converges to $\frac 12$ in the weak$\text{-}^*$topology of $C^0([a,b],E)^*$. Then the conclusion follows from Lemma~\ref{lem:homogenization}.

\vspace{2mm}

\textbf{Claim 3.} For every $(t,x)\in \R\times E$, the sequence $(\phi_n(t,x))$ is a Cauchy sequence. 

\vspace{2mm}

\noindent By Claim 2 we have 
\begin{align*}
\|\phi_n(t,x)-\phi_m(t,x)\| & =\left \| \int_0^t \big (Z_n(s,\phi(s,x)- Z_n(s,\phi(s,y))\big )\, \diff s \right \|\\
				&= \left \| \int_0^t \Big ( \frac{X+Y}{2} (\phi_n(s,x)) - \frac{X+Y}{2}(\phi_m(s,x))\Big ) \, \diff s \right \| + \epsilon_{n,m}(t),
\end{align*}
where $\epsilon_{n,m}(t)$ is infinitesimal for $m,n\to +\infty$, uniformly in $t\in [-T,T]$. Since $(X+Y)/2$ is $\kappa$-Lipschitz, we deduce that 
$$\|\phi_n(t,x)-\phi_m(t,x)\| \leq \kappa \, \int_0^t \|\phi_n(s,x)-\phi_m(s,x)\| \,\diff s + \epsilon_{n,m}(t).$$
By Gronwall's lemma, $\|\phi_n(t,x)-\phi_m(t,x)\|\to 0$ for $n,m\to +\infty.$

We are now ready to complete the proof. By Claims 3 and 1, the sequence $\phi_n$ converges to some $\phi\in C^0(\R\times E,E)$ uniformly on $[-T,T]\times E$, for every $T>0$. Thus, it only remains to show that $\phi$ is the 
flow of $(X+Y)/2$. To this purpose fix $(t,x)\in E$. In the identity
$$\phi_n (t,x) = x + \int_0^t \Big (Z_n(s,\phi_n(s,x))- Z_n(s,\phi(s,x))\Big )\, \diff s + \int_0^t Z_n(s,\phi(s,x))\, \diff s,$$
the first integral converges to zero since $Z_n$ is $\kappa$-Lipschitz in the second variable and $\phi_n(\cdot,x)\to \phi(\cdot,x)$ uniformly on bounded intervals, whereas the second integral converges to 
$$\int_0^t \frac{X+Y}{2} (\phi(s,x))\, \diff s$$
by Claim 2. Therefore, taking the limit for $n\to +\infty$ yields 
$$\phi(t,x) = x+ \int_0^t \frac{X+Y}{2}(\phi(s,x))\, \diff s,$$
which says that $\phi$ is the flow of $(X+Y)/2$. 
\end{proof}

\begin{proof}[Proof of Proposition~\ref{prop:module}]
Let $X$ and $Y$ be bounded Lipschitz vector fields of class $C^k$ for which $\mathcal F$ is positively invariant. If $f\in C^k(\M)$ is Lipschitz and bounded, then $fX$ is Lipschitz and 
$$\phi^{fX} ([-T,T]\times A )\subset \phi^X ( [-\|f\|_\infty T, \|f\|_\infty T]\times A).$$
It follows that $\mathcal F$ is $fX$-positively invariant. By Theorem~\ref{thm:homogenization}, the flow $\phi^{X+Y}$ is the limit of the flow of the non-autonomous flow $\phi^{2Z_n}$, where 
$$Z_n(t,x) := \mathbb I_{J_n}(t) X(x) + \mathbb I_{J_n^c} Y(x), \quad J_n =\bigcup_{k\in\Z} \Big [\frac{2k}{n},\frac{2k+1}{n}\Big ),$$
uniformly on $[-T,T]\times \M$. By the stability property of $\mathcal F$ (see Definition 2.3), it is enough to check that if $A\in\mathcal F$ then also the set 
$$\phi^{Z_n}([-T,T]\times A)$$
belongs to $\mathcal F$, or, equivalently ($\mathcal F$ is an ideal), that the sets 
$$\phi^{Z_n}([0,T]\times A), \quad \text{and}\quad \phi^{Z_n}([-T,0]\times A)$$
belong to $\mathcal F$. We consider the first of these two sets, the argument for the second one being analogous. Since 
$$\phi^{Z_n} \big ([0,\frac 1n]\times A\big ) = \phi^X \big ([0,\frac 1n]\times A\big )$$
belongs to $\mathcal F$, and 
\begin{align*}
\phi^{Z_n} \big ([0,\frac{2k}n]\times A\big ) &\subset \phi^{Y} \Big ([0,\frac 1n]\times \phi^{Z_n} \big ([0,\frac{2k-1}{n}]\times A\big ) \Big ),\\
\phi^{Z_n} \big ([0,\frac{2k+1}n]\times A\big ) &\subset \phi^{X} \Big ([0,\frac 1n]\times \phi^{Z_n} \big ([0,\frac{2k}{n}]\times A\big ) \Big ),
\end{align*}
for every $k\in \Z_+$, an induction argument shows that 
$$\phi^{Z_n}\big ([0,\frac{k}{n}]\times A \big)$$
belongs to $\mathcal F$ for every $k\in\N$. The thesis follows taking $k\geq nT$. 
\end{proof}


\subsection{Proof of precompactness}
\label{s:pseudogradient}

In this subsection we use Proposition~\ref{prop:module} to construct a pseudo-gradient vector field $F$ for $\A_H$ which will enable us to prove pre-compactness of intersection between stable and unstable 
manifolds of critical points of $\A_H$ by employing the general compactness result \cite[Theorem 6.5]{AM:05}. To this purpose, the first step is to define what we mean by essentially vertical sets in $\M^{1-s}$.  


\begin{dfn}
A bounded set $A\subset \mathcal M^{1-s}$ is called \textit{essentially vertical} if, for every local chart $(\varphi_*^{-1},\text{Im}\, \varphi_*)$,
$$Q \varphi_*^{-1} (A\cap \text{Im}\, \varphi_*)$$
is pre-compact in $\HH$, where $Q:\HH\to \HH$ is any projector with kernel the negative eigenspace $\HH^-$ of the operator $L$ defined in \eqref{Lflat}. 
We define
\begin{equation}
\mathcal F := \{A\subset \M^{1-s} \ |\ A \ \text{essentially vertical}\}
\label{def:essentiallyvertical}
\end{equation}
to be the family of all essentially vertical sets. 
\end{dfn}

Theorem \ref{thm:strong2} implies that the definition of essentially vertical sets is well-posed. Also, since every bounded set in $\M^{1-s}$ can be covered by 
finitely many domains of local charts, and in any local chart the condition for a subset to be essentially vertical is given by the pre-compactness of the projection of such a set onto a suitable subspace, 
we readily see that $\mathcal F$ is an essentially vertical family in the stronger sense of Definition~\ref{dfn:evf}, namely that $\mathcal F$ is closed with respect to the Hausdorff distance.

We now proceed to define the pseudo-gradient vector field $F$. We start observing that the metric on $\M^{1-s}$ defined in~\eqref{metricms} is, on every bounded set of $\M^{1-s}$, equivalent to 
the metric induced by embedding $M$ isometrically in $\R^{N}$, see Lemma 2.5 in \cite{Asselle:2020b}. Even though the two metrics might fail to be globally equivalent (see Appendix B of \cite{Asselle:2020b}),
the local equivalence is enough to apply Proposition~\ref{prop:module} on every bounded set of $\M^{1-s}$, and actually on every set of the form $\pi_{1-s}^{-1}(A)$, $A\subset H^s(\T,M)$ bounded. Thus, we consider 
a sequence $\{\varphi_{*,\ell}\}_{\ell\in \N}$ of local parametrizations such that 
$$\M^{1-s,*} = \bigcup_{\ell \in \N} \text{Im}\, \varphi_{*,\ell}.$$
Denoting with $\|\cdot\|$ and $\|\cdot\|_{\mathrm{flat}}$ the norm on $H^s(\T,U)\times H^{1-s}(\T,(\R^n)^*)$ induced by pull-back of the metric on $\M^{1-s}$ and the norm induced by the flat metric on $\R^n$ respectively, and with $\nabla$ and $\nabla^{\mathrm{flat}}$ the respective gradients, we see that the equivalence of the two metrics
implies that for every $\ell \in \N$ we can find a constant $\alpha(\ell)>0$ such that 
$$\alpha(\ell) \|\nabla ( \A_H\circ \varphi_{*,\ell}\|^2 \leq \|\nabla^{\mathrm{flat}} (\A_H\circ \varphi_{*,\ell})\|^2_{\mathrm{flat}} \qquad \text{on}\ \ H^s(\T,U)\times H^{1-s}(\T,(\R^n)^*).$$
This suggests to define, for every $\ell \in \N$, the vector field $F_\ell$ on $\text{Im}\, \varphi_{*,\ell}$ by pushing forward the vector field 
\begin{equation}
- \frac{1}{\alpha(\ell)} \nabla^{\mathrm{flat}} (\A_H\circ \varphi_{*,\ell})
\label{eq:localvectorfield}
\end{equation}
and then a vector field $F$ on $\M^{1-s}$ by gluing together the family of vector fields $\{F_\ell\}$ using a partition of unity $\{\chi_\ell\}$ subordinated to the open covering $\{\text{Im}\, \varphi_{*,\ell}\}$. 

A straightforward computation shows that the vector field in~\eqref{eq:localvectorfield} is a compact perturbation of $\alpha(\ell)^{-1} L(\cdot)$, where $L$ is the operator defined in \eqref{Lflat}. Hence, applying
Proposition A.18  in Appendix A of \cite{Rabinowitz:1986}, we see that $\chi_\ell \cdot F_\ell$ preserves the essentially vertical family $\mathcal F$ for every $\ell\in \N$. From Proposition~\ref{prop:module}
we deduce that 
$$\sum_{\ell=1}^L \chi_\ell \cdot F_\ell$$
also preserves the essentially vertical family $\mathcal F$ for every $L\in\N$, thus also $F$ as one sees by passing to the limit $L\to +\infty$. As by construction $\A_H$ is clearly a Lyapounov function 
for $F$, the last piece of information we need in order to be able to apply \cite[Theorem 6.5]{AM:05} is that the pair $(\A_H,F)$ satisfy the Palais-Smale condition. This however readily follows by construction of $F$: Indeed, if $(\q_m,\p_m)$ is a Palais-Smale sequence 
for the pair $(\A_H,F)$, then noticing that in the series below all but finitely many terms vanish we compute
\begin{align*}
\diff \A_H (\q_m,\p_m) [F(\q_m,\p_m)]&= \diff \A_H (\q_m,\p_m) \Big [ \sum_{\ell=1}^{+\infty} \chi_\ell\cdot F_\ell (\q_m,\p_m)\Big ]\\
						&= - \sum_{\ell=1}^{+\infty}\frac{\chi_\ell(\q_m,\p_m)}{\alpha(\ell)}\cdot \diff (\A_H\circ \varphi_{*,\ell}) (\varphi_{*,\ell}^{-1}(\q_m,\p_m)) \big [\nabla^{\mathrm{flat}}(\A_H\circ \varphi_{*,\ell})(\varphi_{*,\ell}^{-1}(\q_m,\p_m))\big ]\\
						&=-\sum_{\ell=1}^{+\infty}\frac{\chi_\ell(\q_m,\p_m)}{\alpha(\ell)}\cdot \|\nabla^{\mathrm{flat}} (\A_H\circ \varphi_{*,\ell})(\varphi_{*,\ell}^{-1}(\q_m,\p_m))\|^2_{\mathrm{flat}}\\
						&\leq -\sum_{\ell=1}^{+\infty} \chi_\ell(\q_m,\p_m) \cdot \|\nabla ( \A_H\circ \varphi_{*,\ell})(\varphi_{*,\ell}^{-1}(\q_m,\p_m))\|^2\\
						&= -\sum_{\ell=1}^{+\infty} \chi_\ell(\q_n,\p_n) \cdot \|\nabla \A_H (\q_m,\p_m)\|^2\\
						&= - \|\nabla \A_H (\q_m,\p_m)\|^2,
\end{align*}
which implies that $(\q_m,\p_m)$ is also a Palais-Smale sequence for the pair $(\A_H,-\nabla \A_H)$, thus pre-compact by Proposition~\ref{lem:compactness}. Conversely, any Palais-Smale sequence for 
$(\A_H,-\nabla \A_H)$ is also a Palais-Smale sequence for $(\A_H,F)$ as it is bounded and hence contained in the union of finitely many $\text{Im}\, \varphi_{*,\ell}$'s.

We are now in position to apply Theorem 6.5 in \cite{AM:05} to deduce the following

\begin{thm}
The essentially vertical family $\mathcal F$ is $F$-positively invariant, where $F$ is the negative pseudo gradient vector field for $\A_H:\M^{1-s,*}\to \R$ defined above. As a corollary, the intersection 
$$W^u((\q,\p);F) \cap W^s((\tilde \q,\tilde \p);F)$$
between the unstable and stable manifold of any pair of critical points $(\q,\p),(\tilde \q,\tilde \p)$ of $\A_H$ is pre-compact. \qed
\label{thm:positiveinvariance}
\end{thm}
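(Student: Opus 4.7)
The plan is to verify the hypotheses of the abstract pre-compactness result \cite[Theorem 6.5]{AM:05} applied to the triple $(\A_H, F, \mathcal F)$: namely that $\A_H$ is a Lyapunov function for $F$, that the pair $(\A_H, F)$ satisfies the Palais-Smale condition, and that $\mathcal F$ is $F$-positively invariant. The corollary on $W^u\cap W^s$ then follows by a direct citation of that theorem.

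First I would establish the $F$-positive invariance of $\mathcal F$. Since $F = \sum_\ell \chi_\ell F_\ell$ and around each point only finitely many $\chi_\ell$ are non-zero, it is enough to show that every finite partial sum $\sum_{\ell=1}^L \chi_\ell F_\ell$ preserves $\mathcal F$. For a single index $\ell$, the local field $F_\ell$ differs from the linear field $-\alpha(\ell)^{-1} L$ by a vector field whose differential is compact: the Hamiltonian part of $\A_H\circ \varphi_{*,\ell}$ contributes a compact gradient by Theorem~\ref{lem:compactness}(2), while the Liouville-action part is represented exactly by $L$ via~\eqref{Lflat}. Since the projector $Q$ used to define $\mathcal F$ has kernel $\HH^-$ (an eigenspace of $L$), it commutes with the semigroup $e^{-t\alpha(\ell)^{-1}L}$, which therefore trivially preserves the ``pre-compact $Q$-image'' condition. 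A Duhamel-type argument in the style of \cite[Proposition A.18]{Rabinowitz:1986} then shows that adding a compact-differential perturbation leaves this property intact, so the flow of $F_\ell$ preserves $\mathcal F$. Multiplication by the Lipschitz cut-off $\chi_\ell$ preserves invariance via the time-reparametrization estimate $\phi^{\chi_\ell F_\ell}([-T,T]\times A)\subset \phi^{F_\ell}([-\|\chi_\ell\|_\infty T,\|\chi_\ell\|_\infty T]\times A)$ used in the proof of Proposition~\ref{prop:module}, and iterating Proposition~\ref{prop:module} delivers invariance of every finite sum, hence of $F$.

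Second, the Palais-Smale condition for $(\A_H,F)$ reduces to the one already proved in Theorem~\ref{lem:compactness}(4). Using the inequality $\alpha(\ell)\|\nabla(\A_H\circ \varphi_{*,\ell})\|^2\leq \|\nabla^{\mathrm{flat}}(\A_H\circ \varphi_{*,\ell})\|^2_{\mathrm{flat}}$ chart by chart together with $\sum_\ell \chi_\ell \equiv 1$, a short chain-rule computation (essentially the one displayed just before the theorem) shows that $\diff\A_H(\q_m,\p_m)[F(\q_m,\p_m)]\leq -\|\nabla\A_H(\q_m,\p_m)\|^2$; hence any Palais-Smale sequence for $(\A_H, F)$ also satisfies $\|\nabla\A_H(\q_m,\p_m)\|\to 0$ and admits a convergent subsequence. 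The same computation gives the Lyapunov property.

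With all three ingredients in hand, \cite[Theorem 6.5]{AM:05} applies and yields pre-compactness of $W^u((\q,\p);F)\cap W^s((\tilde\q,\tilde\p);F)$ for every pair of critical points. The step I expect to require the most care is the invariance under the local nonlinear flow of $F_\ell$: what is really used is not just compactness of the differential of the perturbation but its compactness as a map between the two spaces governing the definition of $Q$ in the given chart, so that the variation-of-parameters integral lands in a pre-compact set. This is precisely what the strong integrability of $\mathcal E^s$ established in Theorem~\ref{thm:strong2} was designed to guarantee, and is the reason the whole construction goes through.
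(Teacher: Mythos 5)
Your proposal is correct and follows essentially the same route as the paper: in each chart the local field is a compact perturbation of the linear operator $\alpha(\ell)^{-1}L$ (whose flow preserves the $Q$-pre-compactness condition), invariance of $\mathcal F$ then follows from the Rabinowitz-type Duhamel argument, the cut-offs and finite sums are handled by Proposition~\ref{prop:module}, the Palais-Smale condition for $(\A_H,F)$ is reduced to the metric one via the chart-by-chart gradient inequality, and the corollary is \cite[Theorem 6.5]{AM:05}. Your closing remark correctly identifies that compactness of the perturbation as a map (not merely of its differential), guaranteed by Theorem~\ref{lem:compactness}(2) and the strong integrability of $\mathcal E^s$, is the point on which the argument hinges, exactly as in the paper.
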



\section{Transversality}
\label{s:transversality}

In this section we prove an abstract transversality result for vector fields on a Hilbert manifold $\M$, and then apply it to show that, up to a small generic perturbation, we can assume that the stable and unstable 
manifolds of critical points of $\A_H$ intersect transversally. The proof of the abstract result follows the line of the one given in \cite{Abbondandolo:2006lk} in the case of finite Morse indices, however the fact that we are here dealing with infinite dimensional stable and unstable manifolds forces us to impose further conditions on the perturbation in order not to destroy all good properties of the given vector field.


We start by recalling some definitions and basic facts about transversality in a Banach setting, referring to \cite{AR:67} for the details. We also recall that a subspace $Y$ of a topological space $X$ is said \textit{generic}
if it contains a countable intersection of open and dense subspaces of $X$. Baire theorem guarantees that a generic subspace of a complete metric space is dense. 


\subsection{General facts about transversality} Let $\varphi:\BB\to \BBB$ be a map of class $C^k$ between $C^k$-Banach manifolds, $k\geq 1$. A point $v\in \BBB$ is called a \textit{regular value} for $\varphi$ 
if for every $p\in \varphi^{-1}(v)$ the differential $D\varphi(p):T_p\BB\to T_{v}\BBB$ is a left inverse, that is, if it is onto and its kernel is complemented. In this case, $\varphi^{-1}(v)$ is a submanifold of class $k$ of $\BB$.
The map $\varphi$ is said a \textit{Fredholm map} if its differential at every point is a Fredholm operator, i.e. a bounded operator with finite dimensional kernel and co-kernel. When the index of the differential is constant 
(for instance, when $\BB$ is connected), this integer is called the \textit{Fredholm index} of $\varphi$. 

The basic tool in order to deal with genericity questions is the Sard-Smale theorem \cite{Smale:1965}. In its original formulation, it applies to Fredholm maps whose domain is a Banach manifold which is a Lindel\"of space. 
 Here, we need the following generalization which is due to F. Quinn and A. Sard \cite[Theorem 1]{QS:72}, where the Lindel\"of property is replaced by the $\sigma$-properness of the map. We recall that a continuous map 
 $\varphi:\BB\to \BBB$ is \textit{proper} if the inverse image of every compact set is compact, and $\sigma$-\textit{proper} if $\BB$ is the countable union of open sets, on the closure of each of which $f$ is proper.
 
\begin{thm}
\label{thm:sard}
Let $\varphi:\BB\to \BBB$ be a $\sigma$-proper Fredholm map of class $C^k$ between $C^k$-Banach manifolds with Fredholm index $m$. If $k\geq \max \{1,m\}$, then the set of regular values of $\varphi$ is 
generic in $\BBB$. 
\end{thm}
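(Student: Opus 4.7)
The plan is to reduce Theorem~\ref{thm:sard} to the classical finite-dimensional Morse-Sard theorem via a local normal form for Fredholm maps, and then to globalize using the $\sigma$-proper hypothesis combined with the Baire property of $\BBB$.

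First I would establish the local normal form around an arbitrary $p\in\BB$. Since $D\varphi(p)$ is Fredholm, split $T_p\BB = K\oplus W$ with $K=\ker D\varphi(p)$ finite-dimensional, and $T_{\varphi(p)}\BBB = R\oplus C$ with $R=\mathrm{Im}\,D\varphi(p)$ and $C$ a finite-dimensional complement, so that $\dim K-\dim C = m$. The implicit function theorem then produces charts around $p$ and $\varphi(p)$ in which $\varphi$ reads $(u,w)\mapsto (w,g(u,w))$ with $g:K\times W_0\to C$ of class $C^k$, and a point $(w,c)$ is a regular value of $\varphi$ restricted to the chart precisely when $c$ is a regular value of the finite-dimensional map $g(\cdot,w):K\to C$. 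Applying Morse-Sard to $g(\cdot,w)$---which requires $C^k$ regularity with $k\geq\max\{1,\dim K-\dim C\}=\max\{1,m\}$, exactly the assumption of the theorem---yields that the critical values of $g(\cdot,w)$ form a Lebesgue-null subset of $C$, and a Fubini-type argument over $w$ upgrades this to meagerness of the critical values of the local restriction of $\varphi$ in the target chart.

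To globalize, I would exploit $\sigma$-properness: write $\BB = \bigcup_{n\in\N} U_n$ with $\varphi|_{\overline{U_n}}$ proper, and let $R_n\subset \BBB$ be the set of points that are regular values of $\varphi|_{\overline{U_n}}$. Openness of $R_n$ follows because the critical set inside $\overline{U_n}$ is closed (surjective Fredholm operators with complemented kernel form an open subset of bounded operators), so its image under the proper map $\varphi|_{\overline{U_n}}$ is closed in $\BBB$. For density, points $v\in\BBB\setminus\varphi(\overline{U_n})$ are vacuously regular; for $v\in \varphi(\overline{U_n})$, properness makes the preimage $\varphi^{-1}(v)\cap\overline{U_n}$ compact, hence covered by finitely many local normal form charts, and a finite union of the meager sets from the local analysis is still meager, giving density of regular values near $v$. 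Thus each $R_n$ is open and dense, and the set of all regular values of $\varphi$ is precisely $\bigcap_{n\in\N} R_n$, a countable intersection of open dense subsets, hence generic.

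The main obstacle I anticipate is the density step: while Morse-Sard in each local chart gives meagerness of critical values in the target chart, converting this into density of $R_n$ in all of $\BBB$ requires careful use of properness to reduce to finitely many charts covering the compact preimage of a neighborhood of an arbitrary $v\in\BBB$. The $\sigma$-properness assumption plays a double role here---it provides the countable structure needed for Baire's theorem at the globalization step, and, crucially, the local-to-global patching at each fixed $n$ through the finiteness of subcovers of compact preimages, ensuring that a potentially infinite union of meager sets is reduced to a finite one.
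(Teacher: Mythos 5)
Two remarks before the substance: the paper does not prove Theorem~\ref{thm:sard} at all --- it is quoted from Quinn--Sard \cite{QS:72} --- so your attempt can only be measured against the classical Sard--Smale argument, whose outline (Fredholm local normal form $(u,w)\mapsto(w,g(u,w))$, finite-dimensional Sard in the kernel/cokernel directions, properness to make critical values closed, Baire) you do follow. But two steps do not hold up as written. The first is the regularity bookkeeping: the finite-dimensional Morse--Sard theorem for $g(\cdot,w)\colon K\to C$ requires $C^k$ with $k\geq\max\{1,\dim K-\dim C+1\}=\max\{1,m+1\}$, not $\max\{1,m\}$ as you state, and the ``$+1$'' cannot be dropped (Whitney's $C^1$ function on $\R^2$ whose critical values contain an interval). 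Indeed the borderline case your reading would allow fails outright: $(u,w)\mapsto(f(u),w)$ on $\R^2\times\HH$, with $f$ Whitney's function, is $C^1$, $\sigma$-proper, Fredholm of index $1$, and its regular values are not dense. So your scheme proves the theorem only under the Smale hypothesis $k\geq\max\{1,m+1\}$ (which is all that is ever used in this paper, since the map $\pi$ has index $\mathrm{m}(x)-\mathrm{m}(y)-1\leq k-1$); the sentence in which you quote Sard with threshold $\max\{1,m\}$, ``exactly the assumption of the theorem'', is precisely where a false borderline statement gets ``proved''.

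The second gap is the ``Fubini-type argument'' and the density of $R_n$. Knowing that every slice $\{w\}\times C$ of the local critical-value set is Lebesgue-null does not yield meagerness; what makes the classical proof work is that one restricts to closed pieces $\overline{Z}\subset V_i$ of the form (closed ball in the finite-dimensional kernel factor) $\times$ (closed set in the complement $W$), for which the critical values are \emph{closed} --- the $W$-factor is reproduced identically in the target and the kernel factor is compact --- and a closed set with null slices is nowhere dense. Without this your density step does not close: after covering the compact fibre $\varphi^{-1}(v)\cap\overline{U_n}$ by finitely many normal-form charts (and you need properness once more to produce a neighborhood $N$ of $v$ with $\varphi^{-1}(N)\cap\overline{U_n}$ contained in that finite union; note that ``the compact preimage of a neighborhood of $v$'' does not exist, since $\BBB$ is not locally compact --- only preimages of points or compact sets are compact), you are left with finitely many local critical-value sets of which you only know that each has empty interior, and a finite union of sets with empty interior can perfectly well cover an open set. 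With the closed-piece refinement and the corrected regularity threshold, your argument becomes the standard Smale/Quinn--Sard proof.
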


In order to apply the Sard-Smale theorem, the following proposition will be useful (see also \cite{Abbondandolo:2006lk}):

\begin{prop}
\label{prop:generic1}
Let $\BB,\BBB,\BBBB$ be Banach manifolds, and let $\varphi:\BB \to \BBB$ and $\psi:\BB\to \BBBB$ be maps of class $C^1$ with regular values $p\in \BBB$ and $q\in \BBBB$. Then:
\begin{enumerate}
\item $p$ is a regular value for $\varphi |_{\psi^{-1}(q)}$ if and only $q$ is a regular value for $\psi |_{\varphi^{-1}(p)}$, and
\item $\varphi |_{\psi^{-1}(q)}$ is a Fredholm map if and only if $\psi |_{\varphi^{-1}(p)}$ is a Fredholm map. In this case, the Fredholm indices coincide.
\end{enumerate}
\end{prop}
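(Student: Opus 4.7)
Both statements reduce to linear algebra at an arbitrary point $x\in\varphi^{-1}(p)\cap\psi^{-1}(q)$, since these are the common preimages at which both restricted differentials live. Write $X=T_x\BB$, $Y=T_p\BBB$, $Z=T_q\BBBB$, $A=D\varphi(x)$, $B=D\psi(x)$, and $K=\ker A\cap\ker B$. Because $p$ and $q$ are regular values, $A$ and $B$ are left invertible (surjective with complemented kernel), and the submanifold structure yields $T_x\varphi^{-1}(p)=\ker A$ and $T_x\psi^{-1}(q)=\ker B$. The differentials of the two restricted maps at $x$ are therefore precisely $A|_{\ker B}\colon\ker B\to Y$ and $B|_{\ker A}\colon\ker A\to Z$, and both have kernel $K$.

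For (1), the key tool is the combined map $(A,B)\colon X\to Y\times Z$. First I would establish the equivalence of three surjectivity conditions: $A|_{\ker B}$ surjective $\Leftrightarrow$ $(A,B)$ surjective $\Leftrightarrow$ $B|_{\ker A}$ surjective. The forward implications are immediate; for the converse, given $(y,z)\in Y\times Z$, use surjectivity of $B$ to find $v_0\in X$ with $Bv_0=z$, then apply $A|_{\ker B}$-surjectivity to get $w\in\ker B$ with $Aw=y-Av_0$, so that $v_0+w$ lifts $(y,z)$. The more delicate part is to show that $K$ is complemented in $\ker B$ iff it is complemented in $\ker A$. Assume $K$ admits a closed complement $W\subset\ker B$; then $A|_W\colon W\to Y$ is a continuous bijection, hence an isomorphism by the open mapping theorem, with inverse $R\colon Y\to W$. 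Pick a continuous right inverse $T\colon Z\to X$ of $B$ (which exists since $B$ is left invertible) and set $S:=T-R\circ A\circ T\colon Z\to X$. A direct check gives $AS=0$ (because $AR=\mathrm{id}_Y$) and $BS=\mathrm{id}_Z$ (because $R$ lands in $W\subset\ker B$). Hence $S$ is a continuous right inverse of $B|_{\ker A}$ whose image provides a closed complement of $K$ in $\ker A$. The reverse direction is symmetric.

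For (2), the kernels of $A|_{\ker B}$ and $B|_{\ker A}$ both equal $K$, so one is finite-dimensional iff the other is. For the cokernels I would construct an isomorphism $Z/B(\ker A)\cong Y/A(\ker B)$ as follows: $A$ descends to a surjection $\bar A\colon X/\ker B\to Y/A(\ker B)$ whose kernel equals $(\ker A+\ker B)/\ker B$; under the canonical isomorphism $B\colon X/\ker B\xrightarrow{\sim}Z$ this kernel is identified with $B(\ker A)\subset Z$, so $\bar A$ descends further to an isomorphism $Z/B(\ker A)\xrightarrow{\sim}Y/A(\ker B)$. This shows at once that one cokernel is finite-dimensional iff the other is and that they have equal dimension. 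Combined with the equality of kernels, this proves the Fredholm equivalence and the identity of indices $\dim K-\dim Y/A(\ker B)$.

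The main obstacle I expect is the complementation step in (1): surjectivity propagates from one side to the other by an essentially trivial lifting argument, but transferring the complemented-kernel structure requires the explicit construction of the continuous right inverse $S=T-R\circ A\circ T$, which combines the right inverse provided by regularity of $q$ with the inverse of $A|_W$ supplied by the complementation assumption. Everything else is formal once this key identity is in hand.
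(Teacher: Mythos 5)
Your proof is correct and follows essentially the same route as the paper: the right inverse $S=T-R\circ A\circ T=(\mathrm{Id}-RA)T$ is exactly the paper's construction $S_0=(\mathrm{Id}_E-R_0A)S$ transferring a right inverse of $A|_{\ker B}$ to one of $B|_{\ker A}$, and your cokernel comparison amounts to the paper's identification of both cokernels with $E/(\ker A+\ker B)$. The only cosmetic differences are that you separate surjectivity (via $(A,B)$) from complementation, which the single right-inverse construction already handles, and that the paper phrases the common index as that of the Fredholm pair $(\ker A,\ker B)$.
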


The proposition is an immediate consequence of the following linear statements.

\begin{prop}
\label{prop:generic2}
Let $E,F,G$ be Banach spaces, and let $A\in \mathcal L(E,F)$, $B\in \mathcal L(E,G)$ be left inverses. Then: 
\begin{enumerate}
\item $A |_{\ker B}$ is a left inverse if and only if $B|_{\ker A}$ is a left inverse, and
\item $A|_{\ker B}$ is Fredholm if and only if $B|_{\ker A}$ is Fredholm. In this case, the Fredholm indices coincide. 
\end{enumerate}
\end{prop}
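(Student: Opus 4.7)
My plan is to reduce the proposition to two observations---an identification of kernels and an identification of cokernels---after which both statements become essentially formal. First, I would record the trivial but crucial fact that
\[
\ker(A|_{\ker B}) \;=\; \ker(B|_{\ker A}) \;=\; \ker A \cap \ker B \;=:\; K.
\]
Second, I would establish a natural vector-space isomorphism
\[
F/A(\ker B) \;\cong\; E/(\ker A + \ker B) \;\cong\; G/B(\ker A),
\]
obtained by noting that $A$ descends to a vector-space isomorphism $E/\ker A \xrightarrow{\sim} F$ under which $A(\ker B)$ corresponds to $(\ker A + \ker B)/\ker A$; the symmetric computation with $A$ and $B$ swapped yields the other isomorphism.

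With these two facts in hand, part (2) is immediate: the kernels agree and the cokernels are isomorphic as vector spaces, so finite-dimensionality of one pair controls the other, and the Fredholm indices
\[
\dim K - \dim(F/A(\ker B)) \;=\; \dim K - \dim(G/B(\ker A))
\]
coincide. To obtain genuine Fredholmness on both sides simultaneously (rather than just the equality of algebraic indices), I would invoke the classical consequence of the open mapping theorem: a bounded linear map between Banach spaces with finite-dimensional kernel and finite algebraic codimension of the image automatically has closed range.

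For part (1), surjectivity of $A|_{\ker B}$ translates via the cokernel isomorphism into surjectivity of $B|_{\ker A}$. The remaining point is to transfer the complementability of the common kernel $K$ between $\ker B$ and $\ker A$. Here I would use the hypothesis that both $\ker A$ and $\ker B$ are themselves complemented in $E$: if $\ker B = K \oplus L$ is a closed direct sum and $E = \ker B \oplus P$ is another, then $E = K \oplus (L \oplus P)$, so $K$ is complemented in $E$; restricting any continuous projection $\pi_K : E \to K$ to the subspace $\ker A \supset K$ then yields a continuous projection $\ker A \to K$ whose kernel $\ker A \cap \ker \pi_K$ is closed, complementing $K$ in $\ker A$. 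The symmetric argument closes the loop.

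The main obstacle is keeping track of where the Banach-space topology genuinely intervenes: the algebraic identifications above are purely formal, but I expect the slightly delicate bookkeeping to lie in (i) the closed-range argument upgrading finite algebraic codimension to true Fredholmness, and (ii) the transfer of complementability along the chain $\ker B \rightsquigarrow E \rightsquigarrow \ker A$, both of which rest essentially on the open mapping theorem and on the assumption that $\ker A$ and $\ker B$ are complemented in $E$.
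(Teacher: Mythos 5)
Your proof is correct, but it takes a route that differs from the paper's in an interesting way, most visibly in part (1). For (1), the paper works entirely with right inverses: starting from a bounded right inverse $R_0\in\mathcal L(F,\ker B)$ of $A|_{\ker B}$ and a right inverse $S$ of $B$, it writes down $S_0:=(\mathrm{Id}_E-R_0A)S$ and checks directly that $S_0$ is a right inverse of $B|_{\ker A}$ with range in $\ker A$; this single formula delivers surjectivity and complementability of the kernel simultaneously, and it is constructive. You instead split the two requirements: surjectivity is transferred through the algebraic identification $F/A(\ker B)\cong E/(\ker A+\ker B)\cong G/B(\ker A)$, and complementability of $K=\ker A\cap\ker B$ is transferred through $E$ by composing and restricting bounded projections, using the hypothesis that $\ker A$ and $\ker B$ are complemented in $E$ (your chain $\ker B=K\oplus L$, $E=\ker B\oplus P$, hence $E=K\oplus(L\oplus P)$, then restrict $\pi_K$ to $\ker A$, is sound). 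For (2) the two arguments are essentially parallel — both reduce to $\ker(A|_{\ker B})=\ker(B|_{\ker A})=\ker A\cap\ker B$ and $\operatorname{coker}\cong E/(\ker A+\ker B)$, i.e.\ to the Fredholm pair $(\ker A,\ker B)$ — except that the paper realizes the cokernel isomorphism by conjugating with the right inverse $R$ (via $F\cong RF$ and the projector $\mathrm{Id}_E-RA$ onto $\ker A$), whereas you pass to the quotient $E/\ker A\cong F$, which only uses surjectivity of $A$ and so makes clear that part (2) does not actually need the complemented-kernel hypothesis. What the paper's approach buys is brevity and an explicit right inverse in one stroke; what yours buys is a cleaner separation of the algebraic content (kernels, cokernels, indices) from the topological content (complementability, closed range via the open mapping theorem), and a slightly more transparent accounting of which hypotheses are used where.
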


\begin{proof}
Let $R\in \mathcal L(F,E)$ and $S\in \mathcal L(G,E)$ be right inverses of $A,B$ respectively. 
\begin{enumerate}
\item Suppose that $R_0\in \mathcal L(F,\ker B)$ is a right inverse of $A|_{\ker B}$, that is, a right inverse of $A$ with range in $\ker B$. Then, the map $S_0:= (\text{Id}_E - R_0 A)S$ is a right inverse of $B$, 
being a perturbation of $S$ by an operator with range in $\ker B$, and it takes values in $\ker A$, for 
$$AS_0 = AS - AR_0AS = AS - \text{Id}_F AS =0.$$
Therefore, $S_0$ is a right inverse of $B|_{\ker A}$.
\item The kernels of $A|_{\ker B}$ and $B|_{\ker A}$ coincide and are equal to $\ker A \cap \ker B$. Moreover, since $R:F\to RF$ is an isomorphism and since $\text{Id}_E-RA$ is a projector onto $\ker A$, 
$$\text{coker} A|_{\ker B} = \frac{F}{A\ker B} \stackrel{R}{\cong} \frac{RF}{RA\ker B}\cong \frac{\ker A + RF}{\ker A + RA \ker B} = \frac{E}{\ker A + \ker B}.$$
From this we easily conclude that $A|_{\ker B}$ is Fredholm if and only if $B|_{\ker A}$ is Fredholm, since both assertions are equivalent to the fact that the pair $(\ker A,\ker B)$ is Fredholm, i.e. 
$\ker A\cap \ker B$ is finite dimensional and $\ker A + \ker B$ is finite codimensional. Moreover, the index of $A|_{\ker B}$ and $B|_{\ker A}$ equals the index of the Fredholm pair $(\ker A,\ker B)$. \qedhere
\end{enumerate}
\end{proof}

In the following lemma we single out a useful family of linear mappings whose kernel is complemented.

\begin{lem}
\label{lem:generic3}
Let $E,F,G$ be Banach spaces, and assume that $A\in \mathcal L(E,G)$ has complemented kernel and finite codimensional range. Then, for every $B\in \mathcal L(F,G)$ the kernel of the operator 
$$C\in \mathcal L(E\times F,G), \quad C(e,f) := Ae - Bf,$$
is complemented in $E\times F$. 
\end{lem}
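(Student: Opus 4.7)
The plan is to describe $\ker C$ explicitly in terms of a splitting of $E\times F$ induced by the hypotheses, and then read off a closed complement. Since $\ker A$ is complemented, fix a closed subspace $E_0\subset E$ with $E=\ker A\oplus E_0$. Set $G_0:=\mathrm{range}(A)=A(E_0)$. By assumption $G_0$ has finite codimension in $G$; since finite-codimensional subspaces that are the image of a bounded operator are automatically closed (the quotient map $G\to G/G_0$ factors through the finite-dimensional space $G/\mathrm{range}(A)$), $G_0$ is closed. Hence $A|_{E_0}\colon E_0\to G_0$ is a continuous linear bijection between Banach spaces, so the open mapping theorem gives a bounded inverse $(A|_{E_0})^{-1}\colon G_0\to E_0$.

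Next I would introduce on the $F$-side the subspace $F_1:=B^{-1}(G_0)$. This is closed (preimage of a closed set under $B$) and finite codimensional, since the natural injection $F/F_1\hookrightarrow G/G_0$ shows $\dim F/F_1\leq \dim G/G_0<\infty$. Being finite codimensional and closed, $F_1$ admits a (finite-dimensional, hence closed) topological complement $F_2$, so $F=F_1\oplus F_2$. Set
$$T:=(A|_{E_0})^{-1}\circ B|_{F_1}\colon F_1\longrightarrow E_0,$$
which is bounded by construction.

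With these splittings $E\times F=\ker A\oplus E_0\oplus F_1\oplus F_2$, the kernel of $C$ is easy to identify: $(e,f)\in\ker C$ means $Ae=Bf$, so $Bf\in G_0$, i.e. $f\in F_1$ (writing $f=f_1+f_2$ forces $f_2=0$), and then writing $e=e_1+e_0\in\ker A\oplus E_0$ the condition $Ae_0=Bf_1$ uniquely determines $e_0=Tf_1$ while $e_1\in\ker A$ is free. Thus
$$\ker C=\bigl\{(e_1+Tf_1,\,f_1)\ :\ e_1\in\ker A,\ f_1\in F_1\bigr\}.$$
I claim the closed subspace $N:=\{0\}\oplus E_0\oplus\{0\}\oplus F_2\subset E\times F$ is a topological complement. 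Indeed every $(e_1,e_0,f_1,f_2)\in E\times F$ decomposes as
$$(e_1,e_0,f_1,f_2)=(e_1,\,Tf_1,\,f_1,\,0)+(0,\,e_0-Tf_1,\,0,\,f_2),$$
where the first summand lies in $\ker C$ and the second in $N$; the intersection $\ker C\cap N$ is trivial because $f_1=0$ forces $Tf_1=0$ and hence $e_0=0$. Continuity of both projections follows from boundedness of $T$ and of the splittings, so $\ker C$ is complemented.

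I do not foresee a serious obstacle: each ingredient (splitting of $E$, closedness of $G_0$, complementation of the finite-codimensional $F_1$, continuity of $T$ via the open mapping theorem) is standard, and the only step that requires a moment's care is verifying that $F_1$ has finite codimension, which follows from the injection $F/F_1\hookrightarrow G/G_0$.
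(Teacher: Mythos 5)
Your argument is correct and is essentially the paper's proof in different notation: your $F_1=B^{-1}(\mathrm{ran}\,A)$ is the paper's $F_0=\ker(Q_0B)$, your graph description of $\ker C$ through the bounded map $T=(A|_{E_0})^{-1}\circ B|_{F_1}$ coincides with the paper's $\ker C=\{(e_0+TQ_1Bf_0,f_0)\}$, and your complement $E_0\times F_2$ (complement of $\ker A$ times the finite-dimensional complement of $B^{-1}(\mathrm{ran}\,A)$) is exactly the paper's $E_1\times F_1$. The only blemish is your parenthetical justification that $\mathrm{ran}\,A$ is closed, which as written is circular (you quotient by $G_0=\mathrm{ran}\,A$ itself); the fact is nonetheless standard --- apply the open mapping theorem to the bounded bijection $E/\ker A\times W\to G$, $([e],w)\mapsto Ae+w$, with $W$ a finite-dimensional algebraic complement of $\mathrm{ran}\,A$ --- and the paper uses it implicitly when it introduces the bounded projectors $Q_0,Q_1$.
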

\begin{proof}
Let $E_0:= \ker A$, $E_1\subset E$ be a closed complement of $E_0$, and $P_0,P_1$ be associated projectors. Let $G_1:= \text{ran} A$, $G_0\subset G$ be a (finite dimensional) complement of $G_1$,
and $Q_0,Q_1$ be associated projectors. Then, $A$ induces an isomorphism from $E_1$ to $G_1$, whose inverse will be denoted by $T\in \mathcal L(G_1,E_1)$. The equation $C(e,f)=0$ is equivalent to 
$AP_1 e =Bf$, which is equivalent to the system 
$$\left \{\begin{array}{l} AP_1 e = Q_1 B f, \\ Q_0 Bf = 0,\end{array}\right .$$
again equivalent to 
\begin{equation}
\left \{\begin{array}{l} P_1 e = TQ_1Bf, \\ Q_0Bf =0.\end{array}\right .
\label{eq:complemented}
\end{equation}
Since $Q_0 B$ has finite rank, its kernel - say $F_0$ - has a (finite dimensional) complement $F_1$. By \eqref{eq:complemented}, the kernel of $C$ is 
$$\ker C = \{ (e_0+TQ_1 Bf_0,f_0 ) \in E\times F \ |\ e_0\in E_0, \ f_0\in F_0\},$$ 
and the closed linear subspace $E_1\times F_1$ is a complement of $\ker C$. 
\end{proof}


\subsection{An abstract transversality theorem} Let $\M$ be a Hilbert manifold modeled on the infinite dimensional separable real Hilbert space $\HH$, and let 
$F$ be a complete gradient-like \textit{Morse vector field} of class $C^k$ on $\M$ (i.e. all of its rest points are hyperbolic). Recall that a rest point $x$ is said \textit{hyperbolic} if the spectrum of the Jacobian of $F$ at $x$ is disjoint from the imaginary axis. In this case, the \textit{linear 
unstable space} $\HH^u_x$ is the subspace of $\HH$ corresponding to the part of the spectrum with positive real part.
We further assume that $F$ admits a Lyapounov function $f$ such that the pair $(F,f)$ satisfies the Palais-Smale condition 
(this implies, in particular, that the set of rest points of $F$ is at most countable), and that there exists a strongly integrable $(0)$-essential sub-bundle $\mathcal E\subset T\M$ 
and an essentially vertical family $\mathcal F$ (in the sense of Definition~\ref{dfn:evf}) such that\footnote{Here we use the same notation as in \cite{AM:05}. We shall notice that Condition (C3) in \cite{AM:05} is formulated in terms of admissible presentations and Hausdorff measure of non-compactness. However, for our applications it is more convenient to express it in terms of invariance of an essentially vertical family.}:
\begin{enumerate}
\item[(C1)] for every rest point of $F$, the linear unstable space $\HH^u_x$ belongs to the essential class $\mathcal E(x)$, 
\item[(C3)] $\mathcal F$ is positively $F$-invariant, that is, if $A\in\mathcal F$ then $\phi\big ([0,T]\times A)\in \mathcal F$ for every $T\geq 0$  
\end{enumerate}
Here, $\phi$ denotes the flow of $F$. In particular, the intersection between stable and unstable manifolds of any two rest points of $F$ is pre-compact by \cite[Theorem 6.5]{AM:05}. 
In what follows, we denote by 
$$\mathrm{m}(x) := \dim (\HH^u_x,\mathcal E(x))$$
the relative Morse index of the rest point $x\in \text{sing}\, (F)$. Our goal is to show that $F$ can be perturbed away from the set of its rest points in order to obtain a vector field which has the \textit{Morse-Smale 
property up to order k}, that is, for which the stable and unstable manifolds of critical points whose relative Morse indices differ at most by $k$ intersect transversally. Rather than considering a particular space of perturbations, we list the properties that such a space should have. 
Thus, consider two neighborhoods $\U\subset \V\subset \M$ of sing$\, (F)$ such that each rest point of $F$ belong to a different connected component of $\V$ and
let $\mathfrak X$ be a Banach space consisting of vector fields of class $C^k$ in $\M$ such that:
\begin{enumerate}
\item[(B1)] every $X\in \mathfrak X$ vanishes on $\U$, 
\item[(B2)] for every $X\in \mathfrak X$ with $\|X\|_{\mathfrak X} <1$, $\text{sing}\, (F+X) = \text{sing}\, (F)$, $f$ is a Lyapounov function for $F+X$, and $(f,F+X)$ satisfies the Palais-Smale condition, 
\item[(B3)] the convergence in $\mathfrak X$ implies the $C^k_{\text{loc}}$ convergence, 
\item[(B4)] $\mathfrak X$ is closed under multiplication by a vector space of real functions on $M$ which contains bump functions,
\item[(B5)] $ \{X(p)\, |\, X\in \mathfrak X\}$ is dense in $T_p\M$ for every $p\in \M\setminus \V$, and 
\item[(B6)] every vector field $F+X, X\in \mathfrak X$, satisfies Condition (C3) with respect to $\mathcal E$.
\end{enumerate}

We denote by $\mathfrak X_1$ the unit ball in $\mathfrak X$. Notice that, for every $X\in \mathfrak X_1$, $F+X$ trivially satisfies (C1) with respect to $\mathcal E$ by (B1) and (B2), and 
the intersection 
$$W^u(x;F+X)\cap W^s(y;F+X)$$
of stable and unstable manifolds any two rest points $x,y\in \text{sing}\, (F)$ is pre-compact by (B1), (B2), and (B6).

\begin{thm}
The subset of $\mathfrak X_1$ of vector fields $X$ for which $F+X$ is Morse-Smale up to order $k$ is generic.
\label{thm:morsesmalegeneral}
\end{thm}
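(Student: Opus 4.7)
The plan is to reduce the global statement to a countable family of two-ended transversality problems via Baire, and then for each such pair prove openness and density of the transversality set. Since $\mathrm{sing}(F+X)=\mathrm{sing}(F)$ is fixed on $\mathfrak X_1$ by (B2), and the Palais--Smale condition combined with hyperbolicity forces $\mathrm{sing}(F)$ to be countable, it suffices to show that for every ordered pair $(x,y)$ of rest points with $\mathrm{m}(x)-\mathrm{m}(y)\le k$ the set
\[
\mathcal T(x,y):=\big\{X\in\mathfrak X_1 \,\big|\, W^u(x;F+X)\pitchfork W^s(y;F+X)\big\}
\]
is open and dense in $\mathfrak X_1$; the thesis then follows by intersecting countably many generic sets.

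Openness I expect to be soft. By (B6) the perturbed vector fields satisfy (C3), hence \cite[Theorem 6.5]{AM:05} yields pre-compactness of $W^u(x;F+X)\cap W^s(y;F+X)$, and by (B3) the flow depends $C^k_{\mathrm{loc}}$-continuously on $X$. A standard tubular-neighbourhood stability argument along the pre-compact intersection then makes transversality robust under small $X$-perturbations.

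For density I would apply Theorem \ref{thm:sard} to the universal moduli space
\[
\mathcal M(x,y):=\big\{(p,X)\in\M\times\mathfrak X_1 \,\big|\, p\in W^u(x;F+X)\cap W^s(y;F+X)\big\}
\]
together with its natural projection $\pi\colon\mathcal M(x,y)\to\mathfrak X_1$. To invoke the theorem, three items must be verified: (i) $\mathcal M(x,y)$ is a $C^k$ Banach manifold; (ii) $\pi$ is Fredholm of index $\mathrm{m}(x)-\mathrm{m}(y)\le k$; (iii) $\pi$ is $\sigma$-proper. Item (ii) is precisely where the $(0)$-essential subbundle $\mathcal E$ does its work: by (C1) and the $\mathcal F$-invariance transported along the flow, at every intersection point $p$ the pair $\bigl(T_pW^u(x;F+X),\,T_pW^s(y;F+X)\bigr)$ is a Fredholm pair in $T_p\M$ of index $\mathrm{m}(x)-\mathrm{m}(y)$, and Proposition \ref{prop:generic1} combined with Lemma \ref{lem:generic3} identifies this integer with the Fredholm index of $\pi$. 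Item (iii) follows from (B6) plus \cite[Theorem 6.5]{AM:05} applied to an exhaustion of $\mathfrak X_1$ by balls of smaller radius. Theorem \ref{thm:sard} then produces a generic set of regular values of $\pi$, and by Proposition \ref{prop:generic1} these regular values coincide with $\mathcal T(x,y)$.

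The main obstacle is item (i): showing that $\mathcal M(x,y)$ is a Banach manifold, equivalently that the linearized defining equation at any $(p_0,X_0)\in\mathcal M(x,y)$ with $p_0$ not a rest point is surjective. Writing the linearization as a Duhamel-type equation along the orbit through $p_0$, surjectivity reduces to verifying that
\[
T_{p_0}W^u(x;F+X_0)+T_{p_0}W^s(y;F+X_0)+\bigl\{\text{infinitesimal orbit variations induced by }\delta X\bigr\}=T_{p_0}\M.
\]
The first two summands are Fredholm-complementary up to finite codimension by (ii), so it remains to fill in finitely many missing directions using $\delta X$. Here (B5) (density of $\{X(p)\}$ in $T_p\M$ off $\V$) together with (B4) (localization by bump functions) and (B1) (support disjoint from a neighbourhood of $\mathrm{sing}(F)$, which is available because $p_0$ is not a rest point) combine, after integrating the localized perturbation along the flow, to produce enough variations to cover the missing cokernel. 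The delicate point throughout is that every admissible perturbation must honour (B6), since the Fredholm picture underpinning (ii) and (iii) collapses the moment $\mathcal F$-invariance is lost; ensuring that (B4)--(B6) coexist inside the chosen perturbation space $\mathfrak X$ is exactly what makes the whole argument go through.
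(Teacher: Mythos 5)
Your overall architecture coincides with the paper's: reduce to the countably many pairs $(x,y)$ with $\mathrm{m}(x)-\mathrm{m}(y)\le k$, build a universal moduli space of connecting orbits over $\mathfrak X_1$, apply the Quinn--Sard version of Sard--Smale (Theorem \ref{thm:sard}) to the projection, and intersect the resulting generic sets (the paper parametrizes the moduli space by curves $u\in\mathcal C$ via the section $\Phi(X,u)=u'-(F+X)\circ u$ and normalizes $u(0)$ on a small sphere, rather than by points $p$, but that is cosmetic). The genuine gap is your item (iii), the $\sigma$-properness of $\pi$. Invoking (B6) and \cite[Theorem 6.5]{AM:05} ``applied to an exhaustion of $\mathfrak X_1$ by balls of smaller radius'' does not work: that theorem gives pre-compactness of $W^u(x;F+X)\cap W^s(y;F+X)$ for a single \emph{fixed} vector field, whereas $\sigma$-properness demands compactness of $\pi^{-1}(\mathcal K)\cap\overline{\U_n}$ for every compact $\mathcal K\subset\mathfrak X_1$, i.e.\ a compactness statement uniform along sequences $X_m\to X_\infty$ in $\mathfrak X_1$; moreover balls in $\mathfrak X_1$ have non-compact closure, and exhausting by them does nothing to control the actual source of non-properness, namely flow lines breaking at intermediate rest points. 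This is exactly where the paper must prove a new result, Theorem \ref{thm:precompactnessforsequences} (if $\phi_m$ is the flow of $F+X_m$ and $\phi_m(s_m,p_m)\to x$, $\phi_m(t_m,p_m)\to y$, then $(p_m)$ is compact), whose proof is the place where the extra stability property (iii) of Definition \ref{dfn:evf} (closedness of $\mathcal F$ under Hausdorff limits) is indispensable --- a property you never invoke --- and the correct exhaustion is by the sets $\U_n$ of pairs whose curve stays at distance greater than $1/n$ from $\mathrm{sing}(F)\setminus\{x,y\}$, precisely to isolate the breaking phenomenon.

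Two further points. First, your item (i), the Banach manifold structure of the universal moduli space, is the other substantive step and your treatment is only heuristic: the pointwise condition that $T_{p_0}W^u+T_{p_0}W^s$ together with values $\delta X(p_0)$ fill $T_{p_0}\M$ is not the surjectivity actually required; one must show that $(Y,w)\mapsto -Y\circ u+\nabla_t w-\nabla(F+X)(u)[w]$ is onto the space of sections along the whole orbit vanishing at $\pm\infty$. The paper achieves this by solving the linearized equation separately on two half-lines (Lemma \ref{lem:morsesmalegeneral1}, (3)), interpolating with a cut-off, and then approximating the resulting compactly supported error $t\mapsto\tilde v(t)$ by $t\mapsto Y(u(t))$ using (B5), a partition of unity in the time variable, and (B4), combined with the fact that the range of the restricted operator is closed of finite codimension, so that density of the range suffices. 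Second, you neither need nor should claim openness of $\mathcal T(x,y)$: the closure of the pre-compact intersection contains broken orbits, so stability of transversality under perturbation of $X$ is not the ``soft'' statement you suggest; Sard--Smale already yields genericity of each $\mathfrak X_1(x,y)$, and a countable intersection of generic sets is generic, which is all the theorem asserts.
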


Here, order $k$ of the Morse-Smale property is precisely the order of differentiability of the 
vector field $F$ and of its perturbations. This fact is determined by the regularity versus Fredholm index assumption required by the Sard-Smale theorem \ref{thm:sard}. In a finite-dimensional setting the problem does not occur because $C^k$ functions can always be $C^k$-approximated by smooth ones, while such an approximation may not be possible on an infinite-dimensional Hilbert space (see for instance \cite{NS:73,LL:86}). 

Theorem \ref{thm:morsesmalegeneral}, whose proof will take up the rest of this subsection, generalizes Theorem 2.20 in \cite{Abbondandolo:2006lk}, where the case of finite Morse indices is treated. As already pointed out, in this setting more assumptions are needed in order to ensure that all good properties of $(f,F)$ are not destroyed after perturbation.

Thus, let $x,y\in \text{sing}\, (F)$ with
$$\mathrm{m}(x)-\mathrm{m}(y)\leq k$$
be two rest points of $F$ such that $W^u(x;F)$ and $W^s(y;F)$ have a non-empty intersection. 
Denote by $\mathcal C$ the space of $C^1$-curves $u:\R\to \M$ such that 
$$\lim_{t\to - \infty} u(t) = x, \quad \lim_{t\to+\infty} u(t) = y. \quad \text{and}\ \lim_{t\to \pm \infty} u'(t) =0.$$
The space $\mathcal C$ is a smooth Banach manifold and its tangent space at $u$ is 
$$T_u\mathcal C = \Big \{v \ C^1\text{-section of} \ u^*T\M \ \Big |\ \lim_{t\to \pm \infty} v(t) = \lim_{t\to \pm \infty} \nabla_t v(t) =0\Big \},$$
where $\nabla_t$ denotes the covariant derivative associated to some connection on $u^*T\M$. The Banach manifold $\mathcal C$ is the base space of a smooth Banach bundle $\BB$ whose fiber at $u\in \mathcal C$ is the space 
$$\BB_u :=\Big \{ v \ C^0\text{-section of } u^*T\M \ \Big |\ \lim_{t\to \pm \infty} v(t) =0 \Big \},$$
endowed with the $C^0$-norm. The zero section of $\BB$ will be denoted by $\mathbb O_\BB$.

The fact that $F$ is of class $C^k$ and Property (B3) on the space $\mathfrak X$ imply that the map 
$$\Phi:\mathfrak X_1 \times \mathcal C \to \BB, \quad (X,u) \mapsto u' - (F+X)\circ u,$$
is of class $C^k$. Notice that, for a fixed $X\in \mathfrak X_1$, the map $\Phi_X:= \Phi(X,\cdot)$ is a section of the Banach bundle $\BB$. Moreover, the set  
$$\mathcal Z:= \Phi^{-1}(\mathbb O_\BB)$$ 
is precisely the union over all $X\in \mathfrak X_1$ of the set $W^u(x;F+X) \cap W^s(y;F+X)$. 

For $(X,u) \in \mathcal Z$, we denote by $D^f \Phi(X,u)$ the fibrewise differential 
$$D^f \Phi(X,u):\mathfrak X\times T_u\mathcal C \to \BB_u, \quad D^f \Phi (X,u) [(Y,v)]=D^f_1 \Phi(X,u)[Y] + D^f_2\Phi(X,u)[v],$$
where  
$$D^f_1 \Phi (X,u)[Y] = -Y\circ u, \quad  D^f_2\Phi(X,u)[v] = \nabla_t v - \nabla (F+X) (u) [v].$$

In the next lemma we discuss the properties of the partial fibrewise differential $D^f_2(X,u)$. 

\begin{lem}
\label{lem:morsesmalegeneral1}
For $(X,u)\in \mathcal Z$ the following statements hold:
\begin{enumerate}
\item $D^f_2 \Phi(X,u)$ is a Fredholm operator of index $\mathrm{m}(x)-\mathrm{m}(y)$.
\item $D^f_2 \Phi(X,u)$ is onto if and only if $W^u(x;F+X)$ and $W^s(y;F+X)$ meet transversally along $u$, and
\item For every $a<b$ and every $w\in \BB_u$ there exists $v\in T_u\mathcal C$ such that 
$$D^f_2 \Phi(X,u)[v](t) = w(t) , \quad \forall t\in (-\infty,a] \cap [b, +\infty).$$
\end{enumerate}
\end{lem}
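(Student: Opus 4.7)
The plan is to recognize $D^f_2\Phi(X,u)$ as an asymptotically hyperbolic first-order differential operator along $u$ and to deduce all three statements from the classical Fredholm theory of such operators combined with the $(0)$-essential subbundle structure. Since $u(t)\to x$ and $u(t)\to y$ as $t\to -\infty$ and $t\to +\infty$ respectively, I would first choose a $C^1$ trivialization $\Psi(t):\HH\to T_{u(t)}\M$ of $u^*T\M$ admitting limits $\Psi_\pm$ at $\pm\infty$ and compatible with the strong integrable structure of $\mathcal E$ (cf.\ Theorem~\ref{thm:strong2}), so that the image of a fixed model subspace $V\subset\HH$ represents the essential classes $\mathcal E(x),\mathcal E(y)$ at the endpoints. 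Under $\Psi$, the operator $D^f_2\Phi(X,u)$ becomes $L=\partial_t - A(t):T_u\mathcal C\to \BB_u$ with $A(t)\to A_-$, $A(t)\to A_+$ as $t\to\mp\infty$, where $A_\mp$ is conjugate via $\Psi_\mp$ to the Jacobian of $F+X$ at $x$ resp.\ $y$. By hyperbolicity of the rest points, $\mathrm{spec}(A_\mp)\cap i\R=\emptyset$, so $A_\mp$ admit exponential dichotomies.

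For (1), the classical theorem of Palmer for $\partial_t-A(t)$ on the line with asymptotically hyperbolic coefficients asserts that $L$ is Fredholm and identifies its index with the relative dimension $\dim\bigl(\Psi_-^{-1}(\HH^u_x),\,\Psi_+^{-1}(\HH^u_y)\bigr)$ of the trivialized unstable subspaces. By assumption (C1), $\HH^u_x\in\mathcal E(x)$ and $\HH^u_y\in\mathcal E(y)$; the $(0)$-essential property of $\mathcal E$ together with additivity of relative dimensions between compatible compact perturbations gives
\[
\dim\bigl(\Psi_-^{-1}(\HH^u_x),\Psi_+^{-1}(\HH^u_y)\bigr) \;=\;\dim(\HH^u_x,\mathcal E(x))-\dim(\HH^u_y,\mathcal E(y))\;=\;\mathrm{m}(x)-\mathrm{m}(y),
\]
which, crucially, is independent of the choice of trivialization precisely because $\mathcal E$ is $(0)$-essential. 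Statement (2) then follows by the standard identification: $v\in\ker D^f_2\Phi(X,u)$ iff $v$ is an infinitesimal deformation of $u$ through $(F{+}X)$-orbits decaying at both ends, hence $v(t)\in T_{u(t)}W^u(x;F{+}X)\cap T_{u(t)}W^s(y;F{+}X)$; by Fredholmness, surjectivity of $D^f_2\Phi(X,u)$ is equivalent to $T_{u(0)}W^u(x;F{+}X)+T_{u(0)}W^s(y;F{+}X)=T_{u(0)}\M$, which is precisely transversality of the two manifolds along $u$.

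For (3) (interpreting the intersection as the union $(-\infty,a]\cup[b,+\infty)$), I would use the exponential dichotomy of $\partial_t-A(t)$ on each half-line $(-\infty,a]$ and $[b,+\infty)$, guaranteed by the uniform closeness of $A(t)$ to the hyperbolic limits $A_\mp$ (enlarging $|a|,|b|$ if needed). This yields bounded right inverses producing $C^1$-sections $v_\pm$ of $u^*T\M$ solving $\nabla_t v_\pm - A(t)v_\pm = w$ on the respective tails with $v_\pm, \nabla_t v_\pm\to 0$ at infinity. Any $C^1$ interpolation of $v_-,v_+$ on $[a,b]$ gives the desired $v\in T_u\mathcal C$. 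The main obstacle is the index computation in (1), namely the simultaneous compatibility of the trivialization $\Psi$ with the essential class at both endpoints and the verification that the analytic Fredholm index coincides with the topologically defined $\mathrm{m}(x)-\mathrm{m}(y)$; this rests on the refined properties developed in Section~\ref{s:essentialsubbundle}, in particular on $(0)$-essentiality (mere essentiality would yield the index only modulo an integer) and on strong integrability, which ensures that the trivialization can be arranged to carry a constant subspace to the correct essential class.
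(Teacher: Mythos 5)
Your overall architecture (trivialize $u^*T\M$, read $D^f_2\Phi(X,u)$ as an asymptotically hyperbolic operator $\partial_t-A(t)$, deduce (2) from Fredholmness plus the kernel identification, and prove (3) by solving on the two half-lines and interpolating) matches the paper's proof, and parts (2) and (3) are essentially fine. The genuine gap is in part (1): you invoke ``the classical theorem of Palmer'' to assert that $\partial_t-A(t)$ with hyperbolic asymptotics is automatically Fredholm with index given by a relative dimension of the asymptotic unstable subspaces. Palmer's theorem is a finite-dimensional result; in the present setting the stable and unstable subspaces of $A(\pm\infty)$ are infinite dimensional and infinite codimensional, and then an asymptotically hyperbolic operator of this type is in general \emph{not} Fredholm. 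The correct statement (Theorem 5.1 in \cite{AM:03}, which is what the paper uses) is that $D_A$ is Fredholm \emph{if and only if} the pair $(T_{u(0)}W^u(x;F+X),\,T_{u(0)}W^s(y;F+X))$ is a Fredholm pair, in which case the index equals the index of this pair. So Fredholmness is exactly the point that must be proved, not an input.

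Moreover, proving the Fredholm-pair property and computing its index cannot be done from the data at the endpoints alone. Your appeal to (C1) only tells you that $\HH^u_x\in\mathcal E(x)$ and $\HH^u_y\in\mathcal E(y)$; what is needed is that \emph{along the whole flow line} the tangent space $T_{u(t)}W^u(x;F+X)$ belongs to the essential class $\mathcal E(u(t))$ with relative dimension $\mathrm{m}(x)$, and that $(T_{u(t)}W^s(y;F+X),\mathcal E(u(t)))$ is a Fredholm pair of index $-\mathrm{m}(y)$. This propagation is where the invariance of $\mathcal E$ under the (perturbed) flow enters, i.e.\ properties (B1), (B2), (B6) together with Proposition 3.1 and Corollary 3.2 of \cite{AM:05}; choosing a trivialization ``compatible with the strong integrable structure at the endpoints'' does not by itself guarantee that $\Psi_-^{-1}(\HH^u_x)$ and $\Psi_+^{-1}(\HH^u_y)$ are compact perturbations of one another, so the relative dimension you write down is not even well defined without this step. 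Once these two facts are established, the index formula $\mathrm{m}(x)-\mathrm{m}(y)$ follows as in the paper; your remark that $(0)$-essentiality (rather than mere essentiality) is what makes the integer well defined is correct, but it does not substitute for the missing Fredholm-pair argument.
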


\begin{proof}
We choose a trivialization $U:\overline \R\times \HH \to u^*T\M$ and identify $\HH$ with $T_{u(0)} \M$ in such a way that $U(0)=\text{Id}_\HH$. Then, the (linearized)
flow defined by $D^f_2(X,u)$ can be read on $\HH$ as 
$$Y(t):= U(t)^{-1} \diff \Phi^{F+X} (t,u(0)),$$ 
where $\Phi^{F+X}$ is the flow of $F+X$ and $\diff$ denotes differentiation with respect to the variable on $\M$. The function $Y:\R\to \mathcal L(\HH)$ is the 
solution of an asymptotically hyperbolic linear system on $\R$ in $\HH$ 
\begin{equation}
\left \{\begin{array}{l} Y'(t) = A(t) Y(t), \\ Y(0)= \text{Id}, \end{array}\right . 
\label{asymptoticallyhyperbolicsystem}
\end{equation}
where $A(-\infty), A(+\infty)$ are conjugated to a local representation of $\nabla F(x),\nabla F(y)$ respectively (by construction $X$ vanishes in a 
neighborhood of $x$ and $y$). The operator $D^f_2\Phi (X,u)$ can thus be read on $\R\times \HH$ as 
\begin{equation}
D_A:= \frac{\diff}{\diff t} - A(t):C^1_0(\R,\HH)\to C^0_0(\R,\HH),
\label{localrep}
\end{equation}
where 
\begin{align*}
C^0_0 (\R,\HH) &:=\Big \{v:\R\to \HH \ \Big |\ \lim_{t\to \pm \infty} v(t) =0 \Big\},\\
C^1_0 (\R,\HH) &:= \Big \{v:\R\to \HH \ \Big |\ \lim_{t\to \pm \infty} v(t) = \lim_{t\to \pm\infty} \dot v(t) =0\Big \}.
\end{align*}
By the identification $\HH=T_{u(0)}\M$ we further have 
\begin{equation}
W_A^u = T_{u(0)} W^u(x), \quad W^s_A = T_{u(0)} W^s(y),
\label{localrepstable}
\end{equation}
where
\begin{align*}
W_A^u := \Big \{v \in \HH \ \Big |\ \lim_{t\to -\infty} Y(t) v =0 \Big \},\\ 
W_A^s := \Big \{v\in \HH \ \Big |\ \lim_{t\to +\infty} Y(t) v=0\Big \},
\end{align*}
are the \textit{unstable} resp. \textit{stable space} of the system $Y'=AY$. We shall observe, before going further, that the properties 
of $D^f_2\Phi (X,u)$ we are interested in are independent of the choice of the trivialization. Indeed, choosing a different trivialization changes $Y$ to $GY$, for some 
$G\in C^1(\overline \R; GL(\HH))$, and $A, D_A$ respectively to 
$$-G'G^{-1} + GAG^{-1}, \quad G\circ D_A \circ G^{-1}.$$
\begin{enumerate}
\item Theorem 5.1 in \cite{AM:03} together with \eqref{localrepstable} implies that $D_A$ (hence, also $D^f_2\Phi (X,u)$) is Fredholm if and only if $W^u(x)$ and $W^s(y)$ 
have Fredholm intersection at $u(t)$ for some $t\in \R$ (hence for every $t\in \R$); see also \cite[Corollary 8.2, (iv)]{AM:03}. In this case, the Fredholm index of $D^f_2\Phi (X,u)$ is given by 
$$\text{ind}\,  (T_{u(t)} W^u (x) , T_{u(t)} W^s(y)).$$

Using Properties (B1), (B2), (B6) and Proposition
3.1 in \cite{AM:05} we obtain that, for every $t\in \R$, the tangent space $T_{u(t)}W^u(x)$ belongs to the essential class $\mathcal E(u(t))$, and since $\mathcal E$ is (0)-essential,
we also have the identity between integers 
$$\dim (T_{u(t)}W^u(x), \mathcal E(u(t)) ) = \mathrm{m}(x), \quad \forall t\in \R.$$
On the other hand, using again Properties (B1), (B2), (B6), from Corollary 3.2 in \cite{AM:05} we deduce that, for any $t\in \R$, $(T_{u(t)}W^s(y),\mathcal E(u(t)))$ is a Fredholm pair 
with Fredholm index 
$$\text{ind}\, (T_{u(t)}W^s(y),\mathcal E(u(t))) = - \mathrm{m}(y).$$
Therefore, $(T_{u(t)} W^u (x) , T_{u(t)} W^s(y))$ is a Fredholm pair with index 
\begin{align*}
\text{ind} \, (T_{u(t)} W^u (x)& , T_{u(t)} W^s(y)) \\
 &= \text{ind}\, (T_{u(t)}W^s(y),\mathcal E(u(t))) + \dim (T_{u(t)}W^u(x), \mathcal E(u(t)) ) \\
 &= \mathrm{m}(x)- \mathrm{m}(y).
 \end{align*}
\item This is precisely \cite[Corollary 8.2, (iii)]{AM:03}.
\item After trivialization the claim can be reformulated as follows: For every $a<b$ and every $v\in C^0_0(\R,\HH)$ we can find $w\in C^1_0(\R,\HH)$ such that 
\begin{equation}
D_A w(t) = v(t)
\label{partialhyperbolicity}
\end{equation}
holds on $(-\infty,a]$ as well as on $[b,+\infty)$. Adapting the proof of \cite[Proposition 4.2]{AM:03} to our setting yields that the operators $D_A |_{(-\infty,a]}$ 
and $D_A|_{[b,+\infty)}$ are onto, and hence we can find $w_a \in C^1_0((-\infty,a],\HH)$, $w_b \in C^1_0([b,+\infty), \HH)$ such that 
$$D_A |_{(-\infty,a]} w_a = v|_{(-\infty,a]}, \quad D_A |_{[b,+\infty)} w_b = v|_{[b,+\infty)}.$$ 
Now, any $w\in C^1_0(\R,\HH)$ such that $w|_{(-\infty,a]} =w_a$ and $w|_{[b,+\infty)} =w_b$ satisfies \eqref{partialhyperbolicity}. \qedhere
\end{enumerate}
\end{proof}

\begin{lem}
\label{lem:morsesmalegeneral2}
The map $\Phi$ is transverse to the zero-section $\mathbb O_\BB$. In particular, $\mathcal Z$ is a $C^k$-submanifold of $\mathfrak X_1\times \mathcal C$. 
\end{lem}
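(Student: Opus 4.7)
The plan is to check, at every $(X, u) \in \mathcal Z$, that the fibrewise differential $D^f \Phi(X, u) : \mathfrak X \times T_u \mathcal C \to \BB_u$ is a left inverse; the $C^k$-submanifold structure on $\mathcal Z$ then follows from the implicit function theorem applied to $\Phi$. Writing $L := D^f_2 \Phi(X, u)$ and $TY := -Y \circ u$, we have $D^f \Phi(X,u)(Y, v) = TY + Lv$. Complementedness of the kernel is immediate from Lemma~\ref{lem:generic3} applied to the Fredholm operator $L$, whose range is closed and finite-codimensional by Lemma~\ref{lem:morsesmalegeneral1}(1); the same Fredholm property automatically makes $\text{range}(L) + T(\mathfrak X)$ closed in $\BB_u$, since it is the preimage under the projection $\BB_u \to \BB_u/\text{range}(L)$ of a subspace of a finite-dimensional space. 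Thus the whole problem reduces to showing that the range is dense.

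For density I would use Hahn--Banach: assume $\xi \in \BB_u^*$ vanishes on both $\text{range}(L)$ and $T(\mathfrak X)$, and show $\xi = 0$. Since $\BB_u$ consists of continuous sections of $u^*T\M$ vanishing at $\pm\infty$, $\xi$ is represented by a finite vector-valued Radon measure $\mu$ on $\R$. The first step is to show $\mu$ vanishes on the open set $I := \{t : u(t) \notin \V\}$: for each $t_0 \in I$, hypothesis (B5) gives that $\{Y(u(t_0)) : Y \in \mathfrak X\}$ is dense in $T_{u(t_0)}\M$, and (B4) lets us localize $Y$ via bump functions to any small neighborhood of $u(t_0)$. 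A standard approximation argument then shows that $\{Y \circ u : Y \in \mathfrak X\}$ is dense in the subspace of $\BB_u$ consisting of sections supported in $I$, so since $\xi$ annihilates all such $TY$, $\mu$ must vanish on $I$.

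The second step is to show $\mu$ vanishes on $J := \{t : u(t) \in \V\}$. The asymptotic conditions $u(\mp\infty) \in \U \subset \V$ guarantee that $J$ contains tails $(-\infty, a] \cup [b, +\infty)$ for some $a < b$. For arbitrary $w \in \BB_u$, Lemma~\ref{lem:morsesmalegeneral1}(3) produces $v \in T_u \mathcal C$ with $Lv = w$ on these tails, so that
\[
0 \;=\; \xi(Lv) \;=\; \int_I Lv\,d\mu \;+\; \int_J Lv\,d\mu \;=\; \int_{\text{tails}} w\,d\mu,
\]
where the first summand vanishes by Step~1 and only the tail part of the second summand survives. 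Since $w$ is arbitrary, $\mu = 0$ on the tails.

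The main obstacle I foresee is handling any \emph{middle} components of $J$, where $u$ could re-enter $\V$ without converging to a rest point. My plan is to arrange $\U, \V$ small enough in the abstract setup so that, for the flow line $u$ under consideration, $J$ consists only of the two tails --- this is a generic condition on $u$ and can be ensured by shrinking $\V$ upfront without affecting (B1)--(B6). If needed, one can refine Step~2 by solving the linear ODE $Lv = w$ locally on each bounded middle component of $J$ and patching with a smooth cutoff, the resulting boundary error being supported in $I$ where $\mu$ already vanishes by Step~1. Combining the two steps gives $\mu \equiv 0$, proving density; together with closedness and the kernel argument, $D^f\Phi(X,u)$ is a left inverse at every $(X,u) \in \mathcal Z$, so $\mathbb O_\BB$ is a regular value of $\Phi$ and $\mathcal Z = \Phi^{-1}(\mathbb O_\BB)$ is a $C^k$-submanifold of $\mathfrak X_1 \times \mathcal C$.
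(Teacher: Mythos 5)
Your first half (complemented kernel via Lemma~\ref{lem:generic3} and the Fredholm property of $D^f_2\Phi(X,u)$, plus the reduction of surjectivity to density of the range) agrees with the paper. For density, however, you take a duality route (Hahn--Banach, a representing vector measure $\mu$, killing $\mu$ first on $I=\{t: u(t)\notin\V\}$ and then on $J=\{t:u(t)\in\V\}$), and there is a genuine gap exactly at the point you flag: the components of $J$ other than the two tails. Your primary fix -- shrinking $\V$ so that the flow line meets $\V$ only near its asymptotics -- is not available. The pair $\U\subset\V$ is fixed data of the abstract setting and the space $\mathfrak X$ is defined relative to it ((B1) forces vanishing on $\U$, (B5) is only assumed off $\V$; shrinking $\V$ strengthens (B5), so the given $\mathfrak X$ need not satisfy the new hypotheses). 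More importantly, transversality of $\Phi$ must be verified at \emph{every} $(X,u)\in\mathcal Z$, not at a ``generic'' $u$, and connecting orbits can genuinely re-enter $\V$ -- e.g.\ by passing arbitrarily close to intermediate rest points, which is precisely the broken-flow-line phenomenon; no choice of $\V$ excludes this. Your fallback is also not yet a proof: the display in Step~2 already uses $Lv=w$ on all of $J$, whereas Lemma~\ref{lem:morsesmalegeneral1}(3) only provides this on the tails; and the patching on middle components requires the cutoff derivatives to be supported in $I$, but between consecutive components of $J$ the set $I$ can have empty interior ($u$ may merely touch $\partial\V$), and $J$ may have infinitely many middle components inside a compact time window, so the proposed gluing with errors confined to $I$ does not go through as described.

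For comparison, the paper avoids duality entirely and never needs any control on where $u$ re-enters $\V$. Since $D^f_2\Phi(X,u)$ is Fredholm, its range is closed and of finite codimension, so it suffices to show the range of $(Y,w)\mapsto -Y\circ u + D^f_2\Phi(X,u)w$ is dense. Given a target $v$, one solves the linear equation exactly on the two half-lines $(-\infty,0]$ and $[0,+\infty)$ with $Y\equiv 0$ (Lemma~\ref{lem:morsesmalegeneral1}(3)), glues the two solutions by a cutoff over a single compact interval $[-\mu,\mu]$ chosen so that $u$ stays there outside the excluded neighborhood of $\mathrm{sing}(F)$ (such an interval exists because $x$ and $y$ lie in different components of $\V$), and is left with an error supported in $[-\mu,\mu]$, which is then approximated uniformly by $-Y\circ u$ using (B5) at the points $u(\sigma_i)$ together with (B4) and a partition of unity in time. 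Because the perturbation $Y$ is concentrated on that one interval, the possible later or earlier excursions of $u$ into $\V$ simply never enter the argument. If you want to keep your duality framework, you would have to prove that $\mu$ vanishes on \emph{all} of $J$, which essentially forces you back to solving the equation across the middle components -- i.e.\ to the paper's direct construction.
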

\begin{proof}
All we have to show is that, for every $(X,u)\in\mathcal Z$, the kernel of the fibrewise differential $D^f\Phi(X,u)$ is complemented in $\mathfrak X\times T_u\mathcal C$, and that
$$D^f\Phi(X,u):\mathfrak X\times T_u\mathcal C\to \BB_u$$
is onto, that is, that for every $v\in \BB_u$ we can find $(Y,w)\in \mathfrak X\times T_u\mathcal C$ such that 
\begin{equation}
-Y \circ u + \nabla_t w - \nabla (F+X) (u) [w] = v.
\label{transversality1}
\end{equation}
By Lemma \ref{lem:morsesmalegeneral1}, $D_2^f\Phi(X,u)$ is Fredholm, and hence in particular has finite codimensional range and complemented kernel. Thus, Lemma \ref{lem:generic3} shows that $D^f\Phi(X,u)$ has complemented kernel as well.

Using again a local trivialization of $u^*T\M$ we see that \eqref{transversality1} can be read as 
\begin{equation}
-Y(t) + D_A w(t) = v(t), \quad \forall t\in \R,
\label{transversality2}
\end{equation}
where $Y\in C^k(\R,\HH)$ is such that:
\begin{enumerate}
\item[i)] $Y(t)=0$ for every $t\in \R\setminus (a,b)$, for suitable $-\infty < a<b<+\infty$, by Property (B1), and
\item[ii)] $Y(t)\in V_t$, $V_t\subset \HH$ dense subspace for every $t\in (a,b)$, by Property (B5). 
\end{enumerate}

By Lemma \ref{lem:morsesmalegeneral1}, (1) we know that the operator $D_A$ is Fredholm with Fredholm index $m(x)-m(y)$. In particular, the range of the restriction of the operator
\begin{equation}
(Y,w) \mapsto - Y + D_A w 
\label{operatortransversality}
\end{equation}
to $\{0\}\times C^1_0(\R,\HH)$ is a closed finite codimensional subspace of $C^0_0(\R,\HH)$. Therefore,
the claim follows provided we can show that the range of the operator in \eqref{operatortransversality}
is dense in $C^0_0(\R,\HH)$.

Thus, let $v \in C^0_0(\R,\HH)$. By Lemma~\ref{lem:morsesmalegeneral1}, (3) we can find $w_-\in C^1_0((-\infty,0],\HH)$ resp. $w_+\in C^1_0([0,+\infty),\HH)$ which solves \eqref{transversality2} on $(-\infty,0]$ resp. $[0,+\infty)$ with $Y\equiv 0$. If $w_\pm$ match to form a solution of \eqref{transversality2} on $\R$, then we are done. Otherwise, we choose $\mu>0$ such that the restriction $u|_{[-\mu,\mu]}$ is entirely contained outside $\U$, the neighborhood of $\text{sing}\, (F)$ over which every $Y\in \mathfrak X$ vanishes, and consider the convex combination 
$$\tilde w(t) := \varphi(t) \cdot w_-(t) + (1-\varphi(t)) \cdot w_+(t),$$
where $\varphi:\R\to [0,1]$ is a smooth non-increasing function such that 
$$\varphi |_{(-\infty, -\mu ]} \equiv 1, \quad \varphi|_{[\mu,+\infty)} \equiv 0.$$
By construction, $\tilde w \in C^1_0(\R,\HH)$ satisfies \eqref{transversality2} on $(-\infty,-\mu] \cup [\mu,+\infty)$. Therefore, we can rewrite \eqref{transversality2} as 
\begin{equation}
\label{transversality3} 
Y(t) = D_A \tilde w (t) - v(t) =: \tilde v(t), \quad \forall t\in \R,
\end{equation}
where $\tilde v \in C^0_0(\R,\HH)$ has support in $[-\mu,\mu]$. Fix now $\epsilon>0$, 
and choose $\delta >0$ such that 
$$|t_0-t_1|< \delta \quad \Rightarrow \quad \|\tilde v(t_0)-\tilde v(t_1)\| < \frac{\epsilon}{4}.$$
Recall that this is possible since $\tilde v$ has compact support. Fix now 
$$-\mu := \tau_0 < \tau_1 < ... < \tau_{\ell-1}< \tau_\ell := \mu$$
such that $|\tau_{i-1}-\tau_i|<\delta$ for every $i=1,...,\ell$, and let 
$$\sigma_i := \frac{\tau_{i-1}+\tau_i}{2}, \quad i=1,...,\ell,$$
be the middle point of the interval $[\tau_{i-1},\tau_i]$. By ii), for every $i=1,...,\ell$ we can find 
$Y_i\in V_{\sigma_i}$ such that 
$$\|Y_i - \tilde v (\sigma_i)\| < \frac{\epsilon}{4}.$$
For $\rho >0$ small enough we now consider a partition of unity $(\varphi_i)_{i=1,...,\ell}$ of $(-\mu-\rho,\mu+\rho)$ subordinated to the open covering $(-\tau_{i-1}-\rho, \tau_i + \rho),  i =1,...,\ell,$ such that $\varphi_i |_{[\tau_{i-1}+\rho, \tau_i-\rho]}\equiv 1$ for every $i=1,...,\ell$, and define 
$$Y(t) := \sum_{i=1}^\ell \varphi_i(t) \cdot Y_i.$$
Property (B4) implies that $Y\in \mathfrak X$. Furthermore, by construction we have for $t\in [\tau_{i-1}+\rho,\tau_i-\rho]$
\begin{align*}
\|Y(t) - u(t)\| &= \| Y_i - \tilde v (t)\| \leq \|Y_i - \tilde v(\sigma_i)\| + \|\tilde v(\sigma_i)-\tilde v (t)\| < \frac{\epsilon}{2},
\end{align*}
whereas for $t \in (\tau_{i-1} - \rho, \tau_{i-1}+\rho)$, $i=2,...,\ell$,
\begin{align*}
\|Y(t) - \tilde v(t)\| &= \| \varphi_{i-1} (t) Y_{i-1} + \varphi_i (t) Y_i - \tilde v (t)\| \\
		&= \| \varphi_{i-1} (t) ( Y_{i-1} - \tilde v (t)) + \varphi_i (t) (Y_i - \tilde v(t))\| \\
		&\leq \|Y_{i-1}-\tilde v (t) \| + \|Y_i - \tilde v(t)\| \\
		&\leq \|Y_{i-1} - \tilde v (\sigma_{i-1})\| + \|\tilde v (\sigma_{i-1})-\tilde v (t)\| + \|Y_i - \tilde v(\sigma_i)\| + \|\tilde v (\sigma_i)-\tilde v(t)\| \\
		&< \epsilon,
\end{align*}
where we used the fact that $\varphi_{i-1}$ and $\varphi_i$ are the only two functions of the chosen partition of unity that do not vanish identically on $(\tau_{i-1} - \rho, \tau_{i-1}+\rho)$ and that 
$$|t-\sigma_{i-1}| ,\ |t-\sigma_i| < \delta, \quad \forall t \in (\tau_{i-1} - \rho, \tau_{i-1}+\rho).$$
Finally, one shows in an identical fashion that $\|Y(t)-\tilde v (t)\|<\epsilon$ also for $t\in (-\mu-\rho,-\mu+\rho)\cup (\mu-\rho,\mu+\rho)$, thus obtaining
$$\|Y-\tilde v\|_\infty <\epsilon,$$
which is to say that 
$$\|D^f\Phi(X,u) (Y,\tilde w) - v\|_\infty <\epsilon.$$
This completes the proof.
\end{proof}

Let $S\subset \U$ be a small smooth sphere center in $x$ and transversal to the flow of $F$, and hence also to the flow of $F+X$ for every $X\in \mathfrak X$ since $X$ vanishes on $\U$ by property (B1). We denote by $\mathcal Z_0\subset \mathcal Z$ the codimension-one $C^k$-submanifold given by pairs $(X,u)\in \mathcal Z$ such that $u(0)\in S$, and by 
$$\pi:\mathcal Z_0 \to \mathfrak X_1, \quad (X,u) \mapsto X,$$
the restriction to $\mathcal Z_0$ of the projection onto the first factor of $\mathfrak X_1\times \mathcal C$. 

\begin{lem}
\label{lem:morsesmalegeneral3}
The following statements hold:
\begin{enumerate}
\item The map $\pi$ is Fredholm of index $\mathrm{m}(x)-\mathrm{m}(y)-1$.
\item $X\in \mathfrak X_1$ is a regular value of $\pi$ if and only if $W^u(x;F+X)$ and $W^s(y;F+X)$ meet transversally.
\end{enumerate}
\end{lem}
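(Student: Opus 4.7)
The plan is to encode the slice condition $u(0)\in S$ into the zero set of an auxiliary map $\Psi$ and then deduce both statements simultaneously by combining Lemmas~\ref{lem:morsesmalegeneral1}--\ref{lem:morsesmalegeneral2} with the swap principle of Proposition~\ref{prop:generic1}. Since $S$ is codimension-one in $\M$, on a neighbourhood $U\supset S$ I would choose a smooth defining function $\rho:U\to\R$ with $S\cap U=\rho^{-1}(0)$ and $D\rho\ne 0$ on $S$, and consider
\[
\Psi:\mathfrak X_1\times\{u\in\mathcal C:u(0)\in U\}\longrightarrow \BB\oplus\R,\qquad \Psi(X,u):=\bigl(\Phi(X,u),\,\rho(u(0))\bigr).
\]
On this open set one has $\mathcal Z_0=\Psi^{-1}(\mathbb O_{\BB}\times\{0\})$, and $\pi$ coincides with the restriction of the projection $\pi_1:\mathfrak X_1\times\mathcal C\to\mathfrak X_1$. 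Since the Fredholm property and the regular-value condition are local/pointwise, working with $\rho$ locally is harmless and different local $\rho$'s glue in an obvious way.

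First I would verify that $(\mathbb O_{\BB},0)$ is a regular value of $\Psi$ so that Proposition~\ref{prop:generic1} applies. Surjectivity of the $\BB$-component is exactly Lemma~\ref{lem:morsesmalegeneral2}. For the $\R$-component I would exploit the tangent vector $(0,\dot u)$: differentiating the flow equation $u'=(F+X)\circ u$ in the time-translation direction shows $(0,\dot u)\in\ker D^f\Phi(X,u)$, while the transversality of $S$ to the flow of $F$ together with $X\equiv 0$ on $\U\supset S$ (property (B1)) give
\[
D\rho(u(0))[\dot u(0)]=D\rho(u(0))\bigl[F(u(0))\bigr]\ne 0.
\]
Hence any prescribed value in the $\R$-factor can be reached by adding a multiple of $(0,\dot u)$, confirming that $\mathcal Z_0$ is a $C^k$-submanifold and that Proposition~\ref{prop:generic1} is available.

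For claim~(1) I would apply Proposition~\ref{prop:generic1}(2) to $\varphi=\pi_1$ and $\psi=\Psi$, which reduces the Fredholm property and Fredholm index of $\pi=\pi_1|_{\Psi^{-1}(\mathbb O_{\BB},0)}$ to that of the slice $\Psi(X,\cdot):\mathcal C\to\BB\oplus\R$. Its fibrewise differential at $u$ is $v\mapsto(D^f_2\Phi(X,u)[v],\,D\rho(u(0))[v(0)])$; by Lemma~\ref{lem:morsesmalegeneral1}(1) the first coordinate is Fredholm of index $\mathrm{m}(x)-\mathrm{m}(y)$, and adjoining a continuous linear functional that is non-zero on the kernel (again witnessed by $\dot u$) drops the index by exactly one, yielding $\mathrm{m}(x)-\mathrm{m}(y)-1$ as required.

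For claim~(2) I would invoke Proposition~\ref{prop:generic1}(1): $X$ is a regular value of $\pi$ if and only if $(\mathbb O_{\BB},0)$ is a regular value of $\Psi(X,\cdot)$, i.e.\ iff $D^f\Psi(X,u)$ is surjective for every $u\in\pi^{-1}(X)$. Arguing as above, the $\R$-coordinate is always attainable via $\dot u$, so surjectivity of $D^f\Psi(X,u)$ is equivalent to surjectivity of $D^f_2\Phi(X,u)$, which by Lemma~\ref{lem:morsesmalegeneral1}(2) is precisely transversality of $W^u(x;F+X)$ and $W^s(y;F+X)$ along $u$. Since $S$ is a small sphere centred at the hyperbolic rest point $x$ and transverse to the flow, every connecting orbit has a (essentially unique) representative in $\mathcal Z_0$; because both invariant manifolds are flow-invariant, transversality at the $S$-crossing propagates along the entire orbit. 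I expect the main delicate point to be exactly this passage---from the pointwise transversality at the slice produced by Proposition~\ref{prop:generic1} to genuine global transversality of two infinite dimensional submanifolds---and this is where the explicit form of Lemma~\ref{lem:morsesmalegeneral1}(2) and the flow-invariance of the stable/unstable manifolds need to be combined carefully.
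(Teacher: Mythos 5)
Your proof is correct and follows essentially the same route as the paper: both rest on the swap principle of Proposition~\ref{prop:generic1} combined with Lemma~\ref{lem:morsesmalegeneral1} and Lemma~\ref{lem:morsesmalegeneral2}, the only difference being that you encode the slice condition $u(0)\in S$ by augmenting $\Phi$ with a defining function $\rho$, whereas the paper first obtains that the projection $\mathcal Z\to\mathfrak X_1$ is Fredholm of index $\mathrm{m}(x)-\mathrm{m}(y)$ and then restricts to the codimension-one submanifold $\mathcal Z_0$, the time-translation vector $(0,\dot u)$ accounting in both versions for the drop of the index by one. Your closing point about propagating transversality from the $S$-crossing to the whole intersection is sound (and is in fact glossed over in the paper's two-line proof of part (2)): Lemma~\ref{lem:morsesmalegeneral1}(2) already gives transversality along the entire orbit through $u$, and every connecting orbit meets the sphere $S$, so checking at the slice suffices.
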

\begin{proof}
$\ $

\begin{enumerate}
\item[(1)] By Lemma \ref{lem:morsesmalegeneral2}, $\mathbb O_\BB$ is a regular value of $\Phi$. Since any $X\in \mathfrak X_1$ is a regular value for the projection $\mathfrak X_1\times \mathcal C \to \mathfrak X_1$, Statement (1) follows immediately from Proposition \ref{prop:generic1}, (2) combined with Lemma \ref{lem:morsesmalegeneral1}, (1). Indeed, 
the projection $\mathcal Z\to \mathfrak X_1$ is Fredholm with index $\mathrm{m}(x)-\mathrm{m}(y)$, and restricting to $\mathcal Z_0$ just reduces the dimension of the kernel by one in each tangent space.
\item[(2)] Follows trivially from Lemma \ref{lem:morsesmalegeneral1}, (2) combined with Proposition \ref{prop:generic1}, (1). \qedhere
\end{enumerate}
\end{proof}

The last piece of information we need in order to apply the Sard-Smale theorem \ref{thm:sard} is the $\sigma$-properness of the map $\pi$; notice indeed that the classical Sard-Smale theorem is not applicable 
in this setting as the spaces under consideration are not Lindel\"of. 

\begin{prop}
\label{prop:sigmaproperness}
The map $\pi:\mathcal Z_0\to \mathfrak X_1$ is $\sigma$-proper. 
\end{prop}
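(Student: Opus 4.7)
The plan is to exhibit an explicit countable open covering $\{\mathcal Z_0^{(n)}\}_{n\in\N}$ of $\mathcal Z_0$ on the closure of each member of which the projection $\pi$ will be proper. The only source of non-compactness for $\pi$ is the familiar phenomenon of breaking of connecting trajectories at intermediate rest points of $F$; accordingly, the exhaustion must enforce two kinds of \textit{a priori} bounds, one that keeps the trajectories uniformly away from rest points other than $x$ and $y$, and one that controls their asymptotic behavior at $\pm\infty$.

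For each positive integer $n$, I define $\mathcal Z_0^{(n)}\subset \mathcal Z_0$ to be the set of pairs $(X,u)$ such that
$$\mathrm{dist}(u(t),x) < \tfrac{1}{n} \text{ for all } t\leq -n, \quad \mathrm{dist}(u(t),y)< \tfrac{1}{n} \text{ for all } t\geq n,$$
and $\mathrm{dist}(u([-n,n]),\mathrm{sing}(F)\setminus\{x,y\}) > \tfrac{1}{n}$. Each $\mathcal Z_0^{(n)}$ is open by continuity of the flow of $F+X$ jointly in $X$ and in the initial condition, and their union covers $\mathcal Z_0$: for a fixed $(X,u)\in\mathcal Z_0$ the first two conditions follow from the asymptotic limits $u(t)\to x$ and $u(t)\to y$ once $n$ is large enough, while $u$ being a non-constant $F+X$-trajectory forces the compact set $u([-n,n])$ to be disjoint from the closed set $\mathrm{sing}(F)\setminus\{x,y\}$, so the third condition holds as well.

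To show that $\pi$ is proper on $\overline{\mathcal Z_0^{(n)}}$, I take a sequence $(X_k,u_k)\in \overline{\mathcal Z_0^{(n)}}$ with $X_k\to X$ in $\mathfrak X_1$ and seek a subsequence converging in $\mathcal C$. The defining inequalities for $\overline{\mathcal Z_0^{(n)}}$ together with condition (B6) and a parameterized version of the compactness argument in \cite[Theorem 6.5]{AM:05} will yield that the set $\{u_k(0)\}\subset S$ is pre-compact in $\M$. Indeed, each $u_k(0)$ lies in the positive $\phi^{F+X_k}$-flow image of a fixed essentially vertical slice of $W^u(x;F+X_k)$ near $x$, and the $(F+X)$-invariance of $\mathcal F$ uniformly in $X\in\mathfrak X_1$ provides a common essentially vertical bound. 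Standard continuous dependence of the ODE on the parameter $X$, guaranteed by the $C^k_{\mathrm{loc}}$ convergence in (B3), then propagates pre-compactness of initial values to $C^1_{\mathrm{loc}}$ convergence of a subsequence of $u_k$ on all of $\R$. Finally, hyperbolicity of $x$ and $y$, combined with the uniform smallness of $u_k|_{(-\infty,-n]}$ and $u_k|_{[n,\infty)}$ imposed by $\overline{\mathcal Z_0^{(n)}}$, yields $k$-uniform exponential decay estimates near $\pm\infty$ which upgrade the local convergence to convergence in the $C^1_0$-topology of $\mathcal C$.

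The main obstacle will be this last compactness step in the infinite-dimensional setting: pre-compactness of $\{u_k(0)\}$ in $\M$ cannot be obtained from classical ODE theory, and must instead be extracted from condition (B6), the uniform invariance of the essentially vertical family $\mathcal F$ under every perturbed flow $\phi^{F+X}$, $X\in\mathfrak X_1$. Verifying that the constants and auxiliary open sets in the proof of Theorem 6.5 of \cite{AM:05} can be chosen uniformly for $X$ in the unit ball of $\mathfrak X$, so that the essentially vertical bound survives the limit $X_k\to X$, will be the essential technical point of the proof.
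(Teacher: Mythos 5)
Your overall strategy is the same as the paper's: exhaust $\mathcal Z_0$ by countably many open sets whose defining conditions keep the connecting orbits uniformly away from intermediate rest points, and then reduce properness on each closure to a compactness statement for orbits of the perturbed flows $\phi^{F+X_k}$ along a convergent sequence $X_k\to X$ in $\mathfrak X_1$. The difficulty is that this last statement is exactly the heart of the matter, and you do not prove it: you explicitly defer ``the essential technical point'', namely the pre-compactness of $\{u_k(0)\}$ coming from the essentially vertical family when the vector field varies with $k$. In the paper this is isolated as a separate theorem (Theorem~\ref{thm:precompactnessforsequences}, the sequence version of \cite[Theorem 6.5]{AM:05}) and proved in detail; Proposition~\ref{prop:sigmaproperness} is then a short consequence, using the sets $\U_n$ defined by $\dist(u(\cdot),\mathrm{sing}(F)\setminus\{x,y\})>1/n$ together with the broken-flow-line analysis. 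As it stands, your proposal assumes precisely the statement that makes the proposition nontrivial, so there is a genuine gap.

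Moreover, the mechanism you suggest for closing it is not the one that works. You propose choosing ``the constants and auxiliary open sets in the proof of Theorem 6.5 of \cite{AM:05} uniformly for $X$ in the unit ball of $\mathfrak X$''; but $\mathfrak X_1$ is not compact, and uniformity over the whole ball is neither available nor needed (properness only involves compact sets of $\mathfrak X_1$, hence convergent sequences). The paper's argument for a convergent sequence $X_m\to X_\infty$ is different in nature: one shows that, for $m\geq m_\epsilon$, the flow segments $\phi_m([0,T]\times\{\phi_m(s_m'',p_m)\})$ lie in an $\epsilon$-neighborhood of the set $\phi_\infty([0,T]\times\{\phi_m(s_m'',p_m)\,|\,m\in\N\})$, which is essentially vertical by invariance of $\mathcal F$ under the \emph{limit} flow, and then one invokes the stability property (iii) of Definition~\ref{dfn:evf} (closedness of $\mathcal F$ under Hausdorff approximation) --- the condition added to the definition of essentially vertical family precisely for this purpose. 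Without an argument of this type your ``common essentially vertical bound'' is unsubstantiated. Two further, fixable, points: your sets $\mathcal Z_0^{(n)}$ for small $n$ need not have the properness property (a rest point $z\neq x,y$ within distance $2/n$ of $x$ or $y$ is not excluded by your conditions, so breaking at $z$ can occur inside $\overline{\mathcal Z_0^{(n)}}$; likewise your exponential-decay and convergence-to-$x$ claims near $\pm\infty$ require $1/n$ smaller than the size of isolating neighborhoods of $x$ and $y$), so you should start the exhaustion at some $n_0$ depending on $x,y$; and for properness you should start from $X_k$ in an arbitrary compact subset of $\mathfrak X_1$ and pass to a convergent subsequence, also checking that the limit pair lies in $\overline{\mathcal Z_0^{(n)}}$.
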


In order to prove the proposition above, we need to show that we can write $\mathfrak X_1\times \mathcal C$ as countable union of open sets $(\U_n)$ in such a way that 
\begin{equation}
\pi^{-1}(\mathcal K)\, \cap\, \overline{\U_n} \quad \text{compact},\quad \forall \mathcal K \subset \mathfrak X_1 \ \text{compact}.
\label{eq:properness1}
\end{equation}
For a fixed $X\in \mathfrak X_1$, the pre-image $\pi^{-1}(\{X\})$ can be identified with
$$W^u(x;F+X) \cap W^s(y;F+X),$$ 
and as such is pre-compact by the Properties (B1), (B2), and (B6) of the space $\mathfrak X$. Moreover, the only source of non compactness is represented by those flow-lines which converge in the Hausdorff distance to a broken flow line from $x$ to $y$, see \cite[Proposition 8.2]{AM:05} or Proposition~\ref{prop:brokenflowlines}. In particular, the set 
$$\pi^{-1}(\{X\}) \cap \Big \{(X,u)\ \Big |\ u \in \mathcal C\ \text{is such that}\ \ \text{dist} \big (u(\cdot), \text{sing}\, (F)\setminus \{x,y\}\big )\geq \frac 1n \Big \}$$
is compact for every $n\in\N$. This suggest that \eqref{eq:properness1} should hold for 
\begin{equation}
\mathcal U_n :=\Big  \{(X,u) \in \mathfrak X_1\times  \mathcal C\ \Big |\  \text{dist} \big (u(\cdot), \text{sing}\, (F)\setminus \{x,y\}\big )> \frac 1n \Big \}, \quad \forall n \in\N.
\label{eq:un}
\end{equation}
As it turns out, \eqref{eq:properness1} is implied by the following generalization of \cite[Theorem 6.5]{AM:05} to sequences of vector fields in $\mathfrak X_1$ converging to some $X_\infty\in \mathfrak X_1$. 

\begin{thm}
Let $(X_m)\subset \mathfrak X_1$ be a sequence such that $X_m\to  X_\infty\in \mathfrak X_1$. Then, denoting by $\phi_m$ the flow on $\M$ defined by $F+X_m$ for every $m\in\N$, the following statement holds: 
Let $(p_m)\subset \M$, $(s_m)\subset (-\infty,0]$, and $(t_m)\subset [0,+\infty)$ be such that $\phi_m(s_m,p_m)\to x$ and $\phi_m(t_m,p_m)\to y$. Then, the sequence $(p_m)$ is compact. 

As a corollary, the union
$$\bigcup_{m\in \N} W^u(x;F+X_m)\cap W^s(y;F+X_m)$$ 
is pre-compact.
\label{thm:precompactnessforsequences}
\end{thm}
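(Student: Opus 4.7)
The plan is to generalize the proof of \cite[Theorem 6.5]{AM:05} to a sequence of vector fields converging in $\mathfrak{X}$. The crucial extra ingredient beyond the single vector field case is Property (iii) of Definition \ref{dfn:evf}, i.e.\ the fact that the essentially vertical family $\mathcal{F}$ is closed in Hausdorff distance, which allows one to take limits of essentially vertical sets. Two further pieces of uniformity are guaranteed by the hypotheses on $\mathfrak{X}$: by (B1) every $X_m$ vanishes on the fixed neighborhood $\U$ of $\mathrm{sing}(F)$, so the local (un)stable manifolds of rest points are literally independent of $m$; and by (B3), the convergence $X_m \to X_\infty$ in $\mathfrak{X}$ yields $C^k_{\mathrm{loc}}$-convergence of the vector fields, hence local uniform convergence of the flows $\phi_m$ on compact subsets of $\R \times \M$.

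First I would localize near $x$. Choose $r>0$ with $\bar B_r(x) \subset \U$ so small that the local unstable manifold $W^u_{\mathrm{loc}}(x;F) := W^u(x;F) \cap \bar B_r(x)$ is well-defined; by (C1), $\HH^u_x$ belongs to the essential class $\mathcal{E}(x)$, and therefore $W^u_{\mathrm{loc}}(x;F)$ is essentially vertical. Since $X_m \equiv 0$ on $\U$, the identity $W^u(x;F+X_m) \cap \bar B_r(x) = W^u_{\mathrm{loc}}(x;F)$ holds for every $m$. After disposing of the trivial cases $p_m \to x$ or $p_m \to y$, there is a last exit time $\tilde{s}_m \in (s_m,0)$ from $\bar B_r(x)$, and $\sigma_m := \phi_m(\tilde{s}_m, p_m)$ lies in the bounded essentially vertical set $\Sigma_x := \partial B_r(x) \cap W^u_{\mathrm{loc}}(x;F)$. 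A symmetric localization near $y$ furnishes an entry time $\tilde{t}_m$ and a point $\tau_m := \phi_m(\tilde{t}_m, p_m) \in \Sigma_y \subset W^s_{\mathrm{loc}}(y;F)$.

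Next I would flow $\Sigma_x$ forward under each $\phi_m$. By (B6), $\mathcal{F}$ is $(F+X_m)$-positively invariant, so each set $\phi_m([0,T_m] \times \Sigma_x)$ belongs to $\mathcal{F}$. Combined with Property (iii) of $\mathcal{F}$ and the local uniform convergence $\phi_m \to \phi_\infty$, this shows that any accumulation set of the orbit pieces $\{\phi_m(t,\sigma_m) : 0 \le t \le \tilde{t}_m - \tilde{s}_m\}$ is again essentially vertical. On such a set, the uniform Palais--Smale condition (B2), together with the Lyapunov property of $f$ which pins the action into the bounded window $[f(y)-\epsilon, f(x)+\epsilon]$, forces pre-compactness away from $\mathrm{sing}(F)$ by the same mechanism underlying \cite[Theorem 6.5]{AM:05}. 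Writing $p_m = \phi_m(-\tilde{s}_m, \sigma_m)$ then yields a convergent subsequence of $(p_m)$.

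The main obstacle will be the possibility of \emph{limit broken flow lines}: the orbit segments may accumulate on a chain of rest points $x = z_0, z_1, \dots, z_N = y$, with sojourn times near the intermediate $z_j$ diverging to $+\infty$. This is resolved by induction on $N$: at each intermediate $z_j$ one relocalizes using the same (C1)--(B1) mechanism (local stable/unstable manifolds near $z_j$ are essentially vertical and independent of $m$), passes to the Hausdorff limit within $\mathcal{F}$, and concatenates the resulting orbits, keeping strict decrease of $f$ across breaks to ensure $N$ is finite. The corollary is then immediate: for any sequence $q_m \in W^u(x;F+X_m) \cap W^s(y;F+X_m)$, the definition of stable/unstable manifolds furnishes times $s_m \le 0 \le t_m$ with $\phi_m(s_m, q_m) \to x$ and $\phi_m(t_m, q_m) \to y$, so applying the conclusion above with $p_m := q_m$ produces a convergent subsequence of $(q_m)$.
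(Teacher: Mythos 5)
Your overall strategy is the right one — generalize \cite[Theorem 6.5]{AM:05}, using (B1), (B3), (B6) and the Hausdorff-stability property (iii) of Definition~\ref{dfn:evf} — and this is indeed how the paper proceeds. But there are two genuine gaps in the execution. First, the localization step is wrong as stated: in the theorem the points $p_m$ are \emph{not} assumed to lie on $W^u(x;F+X_m)$; the hypothesis only says the orbit passes close to $x$ at time $s_m$ and close to $y$ at time $t_m$. Hence the exit point $\sigma_m=\phi_m(\tilde s_m,p_m)$ does not lie on $W^u_{\mathrm{loc}}(x;F)$ (nor does $\tau_m$ lie on $W^s_{\mathrm{loc}}(y;F)$); it is only asymptotically close to it, and turning this into the statement you actually need requires a hyperbolic estimate. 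What the paper uses instead is exactly the conclusion of \cite[Lemma 6.7]{AM:05} (which localizes and hence carries over verbatim because all $X_m$ vanish on $\U$): after shrinking the neighborhoods, the exit points $\{\phi_m(s_m'',p_m)\}$ near $x$ form an essentially vertical set with $\limsup f<f(x)$, while the entry points $\{\phi_m(t_m'',p_m)\}$ near $y$ have the \emph{dual} property that their intersection with every $A\in\mathcal F$ is pre-compact, with $\liminf f>f(y)$. Your write-up conflates these two different properties (you put both $\sigma_m$ and $\tau_m$ on essentially vertical sets), and the dual property at $y$ is precisely what produces compactness at the end — Palais–Smale alone does not.

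Second, the heart of the argument — the step you describe as "any accumulation set of the orbit pieces is again essentially vertical, and Palais–Smale forces pre-compactness by the same mechanism" — is left as a black box, and the mechanism is not the one you indicate. The actual chain is: (i) Palais–Smale for the pairs $(f,F+X_m)$, $m\in\N\cup\{\infty\}$, together with the action bounds $\limsup f(\phi_m(s_m'',p_m))<f(x)$ and $\liminf f(\phi_m(t_m'',p_m))>f(y)$, first bounds the transit times $t_m''-s_m''\leq T$; (ii) one then shows that the single set $\bigcup_m \phi_m\bigl([0,T]\times\{\phi_m(s_m'',p_m)\}\bigr)$ is essentially vertical, by splitting off finitely many indices and observing that for $m\geq m_\epsilon$ the sets $\phi_m([0,T]\times\cdot)$ lie in an $\epsilon$-neighborhood of $\phi_\infty\bigl([0,T]\times\{\phi_m(s_m'',p_m)\}\bigr)$ (uniform convergence of flows on the compact interval $[0,T]$, via (B3)), the latter being essentially vertical by (B6) for $X_\infty$; property (iii) then upgrades "$\epsilon$-close to an element of $\mathcal F$ for every $\epsilon$" to membership in $\mathcal F$; (iii) finally, the entry points near $y$ lie in this essentially vertical set, so the dual property at $y$ makes them compact, and boundedness of $t_m''$ plus completeness gives compactness of $(p_m)$. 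Without the time bound of step (i) you have no compact interval on which to compare $\phi_m$ with $\phi_\infty$, and without step (iii)'s dual property you cannot conclude compactness. Also note that your induction on limit broken flow lines is not needed for this theorem: the paper removes intermediate rest points at the outset by a minimality argument (no subsequences converging to rest points $z_-$ with $f(z_-)<f(x)$ or $z_+$ with $f(z_+)>f(y)$); the broken flow-line analysis only enters afterwards, in the proof of $\sigma$-properness.
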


\begin{proof}[Proof of Proposition~\ref{prop:sigmaproperness}] Clearly, it suffices to show that, for every $n\in\N$ and every sequence $(X_m)\subset \mathfrak X_1$ converging to some $X_\infty \in \mathfrak X_1$, the set 
$$\Big (\bigcup_{m\in \N \cup \{\infty\}} W^u(x;F+X_m) \cap W^s(y;F+X_m) \Big )\  \cap \ \overline \U_n$$
is compact. By Theorem~\ref{thm:precompactnessforsequences}, we have that 
$$\overline{\bigcup_{m\in \N \cup \{\infty\}} W^u(x;F+X_m) \cap W^s(y;F+X_m)}$$
is compact. The proof of Proposition 8.2 in \cite{AM:05} now extends to this more general setting showing that the only 
source of non-compactness of 
$$\bigcup_{m\in \N} W^u(x;F+X_m)\cap W^s(y;F+X_m)$$
is given by those sequences of flow lines that converge in the Hausdorff distance to a broken flow line of $F+ X_m$ for some $m\in\N\cup\{\infty\}$,
thus completing the proof.
\end{proof}

\begin{proof}[Proof of Theorem~\ref{thm:precompactnessforsequences}]
The proof is analogous to the one of \cite[Theorem 6.5]{AM:05}. However, to be able to pass from a fixed vector field to a converging sequence of vector fields we have to employ the stronger definition of essentially vertical families given in Definition~\ref{dfn:evf}. For this reason, we only provide a sketch of the proof highlighting where modifications of the original argument are needed, and refer to \cite{AM:05} for the details. 

We start noticing that Lemma 6.7 in \cite{AM:05}, being a local statement, trivially generalizes to our more general setting. Indeed, by Property (B1), all vector fields $X_m$, $m\in \N\cup \{\infty\}$, vanish on a neighborhood of $\text{sing}\, (F)$. 

Without loss of generality, we can assume that $(p_m)$ is bounded away from $x$ and $y$, and, by a minimality argument, that there are no sequences $(s_m')\subset (-\infty,0]$ resp. $(t_m')\subset [0,+\infty)$ such that,
up to a subsequence, 
$$\phi_m(s_m',p_m)\to z_-,\qquad \text{resp.}\quad \phi_m(t_m',p_m)\to z_+,$$ 
for some rest point $z_-$ with $f(z_-)<f(x)$ resp. $z_+$ with $f(z_+)>f(y)$, where $f$ is a Lyapounov function for $F$, and by Property (B2), of $F+X_m$ for all $m\in \N\cup \{\infty\}$. Also, up to neglecting finitely many $m$'s, we can suppose that for all $m\in \N$ we have
$$\phi_m(s_m,p_m) \in U, \quad \phi_m(t_m,p_m)\in V,\quad p_m\not \in U\cup V,$$
where $U$ and $V$ and closed neighborhoods of $x$ and $y$ respectively where all $X_m$ vanish.  In virtue of Lemma 6.7 in \cite{AM:05}, up to shrinking $U$ further we can assume that for a suitable sequence $s''_m \in (s_m,0)$ there holds
$\{\phi_m(s_m'',p_m) \ |\ m\in \N\} \subset \partial U$ and
\begin{align}
\{ \phi_m(s_m'',p_m) \ |\ m\in \N\} & \in \mathcal F, \label{gru-1}\\
\limsup_{n\to +\infty} f(\phi_m(s_m'',p_m)) &< f(x).\label{gru0}
\end{align}
Similarly, up to shrinking $V$ further we can assume that for a suitable sequence $t_m''\in (0,t_m)$ there holds 
$\{\phi_m(t_m'',p_m) \ |\ m\in \N\} \subset \partial V$ and
\begin{align}
\{ \phi_m(t_m'',p_m) \ |\ m\in \N\} \cap A \ &\text{pre-compact},\ \forall A \in \mathcal F, \label{gru}\\
\liminf_{n\to +\infty} f(\phi_m(t_m'',p_m)) &> f(y). \label{gru1}
\end{align}
Since $X_m\to X_\infty$ and $(f,F+X_m)$ satisfies the Palais-Smale condition for every $m\in \N\cup\{\infty\}$, by \eqref{gru0}
and \eqref{gru1} we 
have that the sequence $(t_m''-s_m'')$ is bounded, say $t_m''-s_m''\leq T,\  \forall m\in \N$. We now claim that 
\begin{equation}
\bigcup_{m\in \N} \phi_m \big ([0,T]\times \phi_m(s_m'',p_m) \big ) \in \mathcal F.
\label{gru2}
\end{equation}
Notice that this completes the proof; indeed, 
$$\{\phi_m(t_n'',p_m)\ |\ m\in \N\} \subset \bigcup_{m\in \N} \phi_m \big ([0,T]\times \phi_m(s_n'',p_m) \big )$$
is essentially vertical as well, and hence in \eqref{gru} we can take $A= \{\phi_m(t_n'',p_m)\ |\ m\in \N\}$, thus 
obtaining that the sequence $(\phi_m(t_m'',p_m))$ is compact. By the boundedness of $t_m''$ and by 
the fact that the vector fields $F+X_m$ are all complete, we conclude that also the sequence $p_m$ is compact.

In order to prove \eqref{gru2}, we start noticing that for fixed $m\in \N\cup \{\infty\}$ the set 
$$\phi_m\big ([0,T]\times \phi_m(s_m'',p_m) \big )$$
is essential vertical, as $\mathcal F$ is positively $(F+X_m)$-invariant by Property (B6). Let now $\epsilon >0$ be fixed.
Since $X_m\to X_\infty$, we have that there exists $m_\epsilon \in \N$ such that 
$$\bigcup_{m\geq m_\epsilon} \phi_m\big ([0,T]\times \phi_m(s_m'',p_m) \big ) \subset B_\epsilon \big (\phi_\infty \big ([0,T]\times \{ \phi_m(s_m'',p_m)\ |\ m\in\N\} \big ) \big ),$$
where $B_\epsilon (A) := \{p \in \M\ |\ \text{dist}\, (p,A)<\epsilon\}$ denotes the open $\epsilon$-neighborhood of the set $A\subset \M$.
Notice that
$$\phi_\infty \big ([0,T]\times \{ \phi_m(s_m'',p_m)\ |\ m\in\N\} \big )$$
is essentially vertical by \eqref{gru-1} and by the fact that $\mathcal F$ is positively $(F+X_\infty)$-invariant.

 Hence 
\begin{align*}
\bigcup_{m\in \N} & \phi_m\big ([0,T]\times \phi_m(s_m'',p_m) \big )  \\
&= \Big (\bigcup_{m<m_\epsilon}  \phi_m\big ([0,T]\times \phi_m(s_m'',p_m) \big )\Big ) \cup \Big ( \bigcup_{m\geq m_\epsilon}  \phi_m\big ([0,T]\times \phi_m(s_m'',p_m) \big ) \Big )
\end{align*}
is contained in the open $\epsilon$-neighborhood of the essentially vertical set 
$$ A_\epsilon:= \Big (\bigcup_{m<m_\epsilon}  \phi_m\big ([0,T]\times \phi_m(s_m'',p_m) \big )\Big ) \cup\Big ( \phi_\infty \big ([0,T]\times \{ \phi_m(s_m'',p_m)\ |\ m\in\N\} \big )\Big ).$$
Since $\epsilon$ is arbitrary, \eqref{gru2} follows from the stability property of $\mathcal F$, see Property (iii) in Definition~\ref{dfn:evf}.
\end{proof}

\begin{proof}[Proof of Theorem \ref{thm:morsesmalegeneral}]
For any pair of distinct rest points $x,y$ of $F$ such that $\mathrm{m}(x)-\mathrm{m}(y)\leq k$, let $\mathfrak X_1(x,y)$ be the set of regular values of the map $\pi$. By Lemma~\ref{lem:morsesmalegeneral3}, Part (1), and Proposition~\ref{prop:sigmaproperness}, the $C^k$-map $\pi$ is 
Fredholm of index $\mathrm{m}(x)-\mathrm{m}(y)-1<k$ and $\sigma$-proper, so the Sard-Smale theorem \ref{thm:sard} implies that the set $\mathfrak X_1(x,y)$ of regular values for $\pi$ is generic in $\mathfrak X_1$. By Lemma \ref{lem:morsesmalegeneral3}, Part (2), for 
every $X\in \mathfrak X_1(x,y)$ the manifolds $W^u(x;F+X)$ and $W^s(y;F+X)$ meet transversally. 

Since the set sing$\, (F)$ is at most countable, the intersection 
\begin{equation}
\label{eq:generic}
\bigcap\ \Big \{ \mathfrak X_1(x,y) \ \Big |\ x\neq y \in \text{sing}\, (F), \ \mathrm{m}(x)-\mathrm{m}(y)\leq k\Big \}
\end{equation} 
is also a generic subset of $\mathfrak X_1$. By Property (B2), for any $X\in \mathfrak X_1$ the vector field $F+X$ has exactly the same rest points of $F$, so for every $X$ in the generic subset \eqref{eq:generic} the vector 
field $F+X$ satisfies the Morse-Smale property up to order $k$. 
\end{proof}


\subsection{The Morse-Smale property for the Hamiltonian action} In this section we show how to apply the abstract transversality theorem~\ref{thm:morsesmalegeneral} to achieve - after a generic perturbation 
of the negative pseudo gradient vector field $F$ of $\A_H$ constructed in Subsection~\ref{s:pseudogradient} - transverse intersection of stable and unstable manifolds of critical points of the Hamiltonian action $\A_H$ whose relative Morse indices differ at most by two.
To do this we need to ensure that $F$ is at least of class $C^2$, that is, that $\A_H$ is at least of class $C^3$. Using the converse of Taylor's theorem (see e.g. \cite{AR:67}) and an argument 
analogous to the one in the proof of Theorem 4 in \cite{Hofer:1994bq}, we see that $\A_H$ is of class $C^3$ provided $H^{1-s}$ compactly embeds\footnote{Actually, compact embedding in $L^3$ should be enough, and this would yield the weaker condition $s\in (1/2,5/6)$.} in $L^4$. 
According to the Sobolev embedding theorem for fractional Sobolev spaces (see e.g. \cite[Theorem 6.5]{Dinezza:2012}), this is the case when 4 is strictly smaller than the critical exponent
$$p^*:= \frac{2}{1-2(1-s)},$$
which in turns implies that $s<3/4$. Thus, hereafter we will assume $s\in (1/2,3/4)$.


\begin{thm}[Morse-Smale property up to order two for the Hamiltonian action]
\label{thm:transverseintersection}
Let $H:\T\times T^*M \to \R$ be a smooth time-depending Hamiltonian satisfying the growth condition~\eqref{eq:growthcondition} and such that the critical points of the corresponding Hamiltonian action functional $\A_H:\M^{1-s}\to \R$ are all hyperbolic. Denote by 
$F$ the negative pseudo-gradient vector field constructed in Subsection~\ref{s:pseudogradient}, for some $s\in (1/2,3/4)$. Then, there exists a generic set $\mathcal P$ of small 
perturbations $P$ of $F$ which satisfy the following properties:
\begin{enumerate}
\item $P$ is a complete Morse gradient-like vector field with Lyapounov function $\A_H$.
\item $P$ coincides with $F$ in a neighborhood of the critical point set of $\A_H$, and $\mathrm{sing}\, (P)=\mathrm{crit}\, (\A_H)$. 
In particular, the negative eigenspace of the Jacobian of $P$ at each rest point $(\q,\p)\in \M^{1-s}$ belongs to the essential class
$\mathcal E^s(\q,\p)$.  
\item The pair $(\A_H,P)$ satisfies the Palais-Smale condition: Every sequence $(\q_n,\p_n)\in \M^{1-s}$ such that $\A_H(\q_n,\p_n)\to a$, for some $a\in \R$, and $\diff \A_H(\q_n,\p_n)[P(\q_n,\p_n)] \to 0$ admits
a converging subsequence.
\item The essential vertical family $\mathcal F$ defined in \eqref{def:essentiallyvertical} is $P$-positively invariant, that is, for each $A\in \mathcal F$ the set $\Phi^P([-T,T]\times A)$ belongs to $\mathcal F$ for every $T\geq 0$, where $\Phi^P$ denotes the flow of $P$. In particular, the intersection between the stable and unstable manifolds 
$$W^u((\q_0,\p_0);P) \cap W^s((\q_1,\p_1);P)$$
of any two rest points $(\q_0,\p_0), (\q_1,\p_1)$ of $P$ (i.e. critical points of $\A_H$) is pre-compact.
\item The stable and unstable manifolds of any two rest points $(\q_0,\p_0), (\q_1,\p_1)$ of $P$ such that $m(\q_0,\p_0)-m(\q_1,\p_1)\leq 2$ intersect transversally. In particular, if non-empty,
$$W^u ((\q_0,\p_0);P) \cap W^s( (\q_1,\p_1);P)$$
is a submanifold of $\M^{1-s}$ of dimension $m(\q_0,\p_0)-m(\q_1,\p_1)\leq 2$.
\end{enumerate}
\end{thm}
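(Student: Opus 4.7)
The strategy is to verify that the triple $(\A_H, F, \mathcal{E}^s)$ meets the hypotheses of the abstract transversality theorem~\ref{thm:morsesmalegeneral} with $k=2$, and then to exhibit a Banach space $\mathfrak{X}$ of admissible perturbations satisfying (B1)--(B6). Most of the background work has already been done: Theorem~\ref{lem:compactness} gives the Palais--Smale condition for $(F, \A_H)$ (since a sequence is Palais--Smale for $(\A_H,F)$ iff it is so for $(\A_H,-\nabla \A_H)$, by the computation in Section~4.2); Proposition~\ref{prop:integrable} and Theorem~\ref{thm:strong2} give strong integrability of $\mathcal{E}^s$ and property (C1) at every critical point; and Theorem~\ref{thm:positiveinvariance} gives the $F$-positive invariance of the essentially vertical family $\mathcal{F}$, i.e.\ (C3). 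The bound $s\in (1/2,3/4)$ is invoked precisely to guarantee, via the Sobolev embedding $H^{1-s}\hookrightarrow L^4$ and the standard converse-to-Taylor argument, that $\A_H$ is of class $C^3$, so that $F$ is of class $C^2$ and Theorem~\ref{thm:morsesmalegeneral} applies with $k=2$.

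Next I would build the Banach space $\mathfrak{X}$. Since $\A_H$ satisfies the Palais--Smale condition and all critical points are hyperbolic, $\mathrm{crit}\,(\A_H)$ is discrete and closed, so one can choose pairwise disjoint open neighborhoods $\mathcal{U}\subset\subset \mathcal{V}$ of $\mathrm{crit}\,(\A_H)$ whose connected components each contain exactly one critical point. I would then take $\mathfrak{X}$ to be the space of $C^2$ vector fields on $\M^{1-s}$ that vanish on $\mathcal{U}$, have support contained in the closure of finitely many charts $\mathrm{Im}\,\varphi_{*,\ell}$, and that in each such chart are of the form $g\cdot Z$ where $g$ is a bounded $C^2$ real-valued function supported away from $\mathcal{U}$ and $Z$ is a $C^2$ vector field whose linearization at each point is a compact operator on $\HH$; the norm on $\mathfrak{X}$ is the $C^2$-norm with appropriate weights. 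Property (B1) is built in; (B3) and (B4) are trivial; (B5) follows because smooth compactly supported vector fields with values in any prescribed direction of $T_p\M^{1-s}$ belong to $\mathfrak{X}$ (the compactness of the linearization being automatic in a fixed chart for vector fields of the form $g\cdot Z_0$ with $Z_0$ constant in the chart). Property (B6) follows from Proposition~\ref{prop:module} combined with the observation, already used in Section~4.2 to produce $F$ itself, that compact perturbations of $L$ preserve $\mathcal{F}$ (via Rabinowitz's Appendix A.18 in \cite{Rabinowitz:1986}).

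The main obstacle I expect is (B2): for $X\in\mathfrak{X}$ of sufficiently small norm we must show that $\mathrm{sing}\,(F+X)=\mathrm{sing}\,(F)$, that $\A_H$ remains Lyapunov, and that $(\A_H,F+X)$ satisfies Palais--Smale. The first two assertions follow from (B1) (so $F+X=F$ on $\mathcal{U}$ and singularities cannot be created inside $\mathcal{U}$) together with a uniform lower bound on $\|F\|$ on $\M^{1-s}\setminus \mathcal{U}$ at each action level, which is a consequence of the Palais--Smale condition of Theorem~\ref{lem:compactness} applied to $\A_H$: any would-be new singularity or any point at which $\diff\A_H[F+X]\geq 0$ would produce, after a standard contradiction argument, a Palais--Smale sequence of $\A_H$ accumulating away from $\mathrm{crit}\,(\A_H)$. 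The stability of the Palais--Smale condition under small perturbations in $\mathfrak{X}$ is proved by the same computation carried out in Section~4.2, noting that if $\|X\|_{\mathfrak X}<1$ then
\begin{equation*}
\diff \A_H(\q_n,\p_n)[(F+X)(\q_n,\p_n)] \leq -\tfrac12 \|\nabla \A_H(\q_n,\p_n)\|^2
\end{equation*}
once the $\|X\|_{\mathfrak X}$-dependent correction is absorbed into the dominant quadratic term, which then reduces the problem to Palais--Smale for $(\A_H,-\nabla\A_H)$.

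With (B1)--(B6) verified, Theorem~\ref{thm:morsesmalegeneral} produces a generic subset $\mathcal{P}\subset \mathfrak{X}_1$ of perturbations $X$ for which $P:=F+X$ satisfies the Morse--Smale property up to order $2$; this is statement (5). Statements (1) and (2) are immediate from the construction of $\mathfrak{X}$ and from (B1)--(B2), together with Proposition~\ref{prop:integrable} (which only depends on the $2$-jet of $F$ at rest points, unchanged by $X$). Statement (3) is the Palais--Smale part of (B2). Statement (4) follows from (B6) and the general pre-compactness result \cite[Theorem 6.5]{AM:05}, whose hypotheses are precisely (C1) and (C3) for the pair $(P,\mathcal{E}^s)$, verified above. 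Finally, the dimension count in (5) follows from Lemma~\ref{lem:morsesmalegeneral1}(1) combined with Definition~\ref{def:morseindex}.
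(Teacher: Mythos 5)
Your overall route is the paper's route: verify (B1)--(B6) for a Banach space $\mathfrak X$ of $C^2$ perturbations vanishing near $\mathrm{crit}\,(\A_H)$, with a weighted (Whitney-type) norm chosen so that small perturbations cannot create rest points or destroy the Lyapunov/Palais--Smale properties, and then invoke Theorem~\ref{thm:morsesmalegeneral} with $k=2$; the use of $s\in(1/2,3/4)$ for $C^3$-regularity of $\A_H$ is also exactly as in the paper. However, there is a genuine gap in your verification of (B6). You define the admissible perturbations in charts as $g\cdot Z$ with $Z$ having a \emph{compact linearization at each point}, and then claim that $F$-invariance of $\mathcal F$ for $F+X$ follows from Proposition~\ref{prop:module} together with the Rabinowitz-type observation that compact perturbations of $L$ preserve $\mathcal F$. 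That observation applies to vector fields which are globally of the form ``bounded linear $+$ compact nonlinear map'' in the chart (as the local fields $F_\ell$ are), not to fields whose differential is merely compact pointwise: as the paper itself stresses when separating integrability from strong integrability, a nonlinear map whose differential is compact at every point need not be compact, and preservation of an essentially vertical family is a global condition on the flow, not an infinitesimal one. So as defined, your $\mathfrak X$ may contain fields $X$ for which $\mathcal F$ is not $(F+X)$-positively invariant, and (B6) — which is what feeds into the pre-compactness statement (4) and into the $\sigma$-properness needed for the Sard--Smale step — is unproved. The paper sidesteps this by building (B6) into the definition: $\mathfrak X$ consists of all $C^2$ fields vanishing on $\U$ that \emph{preserve} $\mathcal F$, with Proposition~\ref{prop:module} then giving invariance for $F+X$; density (B5) is still available because the chartwise constant fields you use, cut off by bump functions, do preserve $\mathcal F$ (their flows are translations).

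A secondary caution: the lower bound on $-\diff\A_H[F]$ outside $\U$ that you invoke should be taken on the momentum sublevels $\{\|\p\|_{1-s}\le\sigma\}$ (as in the paper's weight $\chi$, with $0<\chi(\sigma)<\tfrac12\inf_{\M^{1-s}_\sigma\setminus\U}(-\diff\A_H[F])$), not ``at each action level''; action sublevels are not the sets on which this infimum is shown to be positive via Palais--Smale, and the whole point of the weight is that the infimum may decay as $\sigma\to+\infty$, so an unweighted or action-stratified smallness condition would not yield $\mathrm{sing}(F+X)=\mathrm{sing}(F)$ and the Lyapunov property globally. With $\mathfrak X$ redefined as in the paper and the weight tied to momentum sublevels, the rest of your argument goes through as you describe.
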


\begin{proof}
We fix two open neighborhoods $\U\subset \V \subset \M^{1-s}$ of the set of critical points of $\A_H$ such that every critical point of $\A_H$ belongs to a different 
connected component of $\V$ and consider the space $\mathfrak X$ of all vector fields $X$ on $\M^{1-s}$ of class $C^2$ such that:
\begin{enumerate}
\item[i)] $X$ vanishes identically on $\U$,
\item[ii)] $X$ preserves the essential vertical family $\mathcal F$. 
\end{enumerate}
On $\mathfrak X$ we introduce a \textit{Whitney norm} $\|\cdot\|_{\mathfrak X}$ as follows: we pick a smooth function $\chi:[0,+\infty)\to \R$ such that 
$$0< \chi (\sigma ) < \frac 12 \inf_{\M^{1-s}_\sigma \setminus \U} - \diff \A_H [F], \quad \forall \sigma \geq 0,$$
where $\M^{1-s}_{\sigma}:= \{(\q,\p)\in \M^{1-s}\ |\ \|\p\|_{1-s}\leq \sigma \}$, and define for every $X\in \mathfrak X$
$$\|X\|_{\mathfrak X} := \|\chi^{-1} \cdot X\|_{C^2},$$
where with slight abuse of notation we set 
$$(\chi^{-1}\cdot X)(\q,\p) := \frac{1}{\chi (\|\p\|_{1-s})} \cdot X(\q,\p),\quad \forall (\q,\p)\in \M^{1-s}.$$
Clearly, the topology induced by the norm $\|\cdot\|_{\mathfrak X}$ coincides with the topology given by 
the $C^2_{\text{loc}}$-convergence. We shall notice that the infimum in the definition of $\chi$ is strictly positive for every fixed $\sigma\geq 0$ since the pair $(\A_H,F)$ satisfies the Palais-Smale condition
but might go to zero for $\sigma\to +\infty$. Properties (B1), (B3), and (B4) follow immediately from the definition of $\mathfrak X$ and Proposition~\ref{prop:module}. Moreover, Property (B5) holds even in 
a stronger form, as the set $\{X(p)\, |\, X\in \mathfrak X\}$ coincides with $T_p\M^{1-s}$ for every $p\in \M^{1-s}\setminus \V$. To see this, consider a local chart $(\varphi_*^{-1}, \text{Im},\ \varphi_*)$ of $\M^{1-s}$ 
such that $p\in \text{Im}\, \varphi_*$, and denote with slight abuse of notation the image of $p$ under $\varphi_*^{-1}$ again with $p$. Clearly, for every 
$v\in T_p(H^s(\T,\R^n)\times H^{1-s}(\T,(\R^n)^*))\cong H^s(\T,\R^n)\times H^{1-s}(\T,(\R^n)^*)$, the constant vector field $F_v\equiv v$
preserves essentially vertical sets (as its flow is just a translation). Therefore, by multiplying $F_v$ by a smooth cut-off function supported in a small ball around $p$ we see that there is a vector field $\tilde F_v$ on $\M^{1-s}$ which preserves essentially vertical sets and such that $F_v(p)=v$. 

We now denote by $\mathfrak X_1$ the open unit ball in $\mathfrak X$ with respect to the Whitney norm $\|\cdot\|_{\mathfrak X}$. By the very definition of $\|\cdot\|_{\mathfrak X}$ and the 
triangle inequality we see that, for every $X\in\mathfrak X_1$, 
$$\inf_{\M^{1-s}_\sigma \setminus \U} - \diff \A_H [F + X] \geq \frac 12 \inf_{\M^{1-s}_\sigma \setminus \U} - \diff \A_H[F] >0,\quad \forall \sigma\geq 0,$$
so that the singular set of $P:=F+ X$ coincides with the singular set of $F$ for every $X\in\mathfrak X_1$. Furthermore,
for every $(\q,\p)\in \M^{1-s}\setminus \U$ we have 
\begin{align*}
\diff \A_H ((\q,\p)) [P(\q,\p)] & \leq \frac 12  \diff \A_H(\q,\p)[F(\q,\p)].
					\end{align*}
This implies at once that $\A_H$ is a Lyapounov function for $P$ for every $X\in\mathfrak X_1$, and that every Palais-Smale sequence for the pair
$(\A_H,P)$ must eventually enter $\U$, where we have $P\equiv F$. Thus, the pair $(\A_H,P)$ satisfies the Palais-Smale condition. In other words, Property (B2) also holds for $\mathfrak X$. 

Finally, by construction the essentially vertical family $\mathcal F$ is positively $P$-invariant for every $X\in \mathfrak X$. All claims follow now with 
\begin{equation*}
\mathcal P := \Big \{P:= F + X \ \Big |\ X \in \mathfrak X_{1} \Big \}. \qedhere 
\label{perturbations}
\end{equation*}
\end{proof}


\section{The Morse complex}
\label{s:morsecomplex}

In this section, building on the results of Sections \ref{s:essentialsubbundle}, \ref{s:precompactness}, and \ref{s:transversality}, we construct the Morse complex of the triple $(\A_H,\M^{1-s},\mathcal E^s)$,
where $s\in (1/2,3/4)$ is arbitrary and $\mathcal E^s$ is the strongly integrable (0)-essential subbundle defined in Section \ref{s:essentialsubbundle}. To ease the notation, we hereafter write $\mathcal E$ instead of $\mathcal E^s$. 
The contents of this section are an easy adaptation of the general theory developed in \cite[Sections 8-11]{AM:05}; for this reason, we only provide a sketch of the construction referring to \cite{AM:05} for the proofs. 

Before proceeding further, we shall make some comments:
\begin{itemize}
\item The assumption on $s$ guarantees that the Hamiltonian action $\A_H$ is at least of class $C^3$. As we saw in Section \ref{s:transversality}, this allows us to use the Sard-Smale theorem 
to achieve the Morse-Smale property up to order 2, which is precisely what one needs to define the Morse complex.  
\item Changing the (0)-essential subbundle $\mathcal E$ by a compact perturbation changes the Morse-complex by a shift of the indices (when $\M^{1-s}$ is connected).
\item For a (0)-essential subbundle which does not come from a true subbundle, coherent orientations for the intersection between stable and unstable manifolds cannot be defined in general, and hence
one obtains just a Morse complex with $\Z_2$-coefficients. However, the (0)-essential subbundle $\mathcal E$ can be given an orientation (in a suitable sense) which allows to use $\Z$-coefficients 
instead of $\Z_2$-coefficients.  To keep the exposition as elementary as possible, we rather not pursue this direction in this paper and work with $\Z_2$ coefficients instead. 
\end{itemize}


\subsection{Broken flow-lines}
Let $(\q,\p), (\tilde \q,\tilde\p)\in \M^{1-s}$ be two critical points of $\A_H$. As shown in \ref{thm:transverseintersection}, (4), for every perturbation $P\in\mathcal P$, where $\mathcal P$ is defined as in \eqref{perturbations}, the intersection 
$$W^u((\q,\p);P)\cap W^s ((\tilde \q,\tilde \p);P)$$ 
is pre-compact. Consider a sequence of flow lines from $(\q,\p)$ to $(\tilde \q,\tilde \p)$, and the sequence of their closure 
$$S_n := \overline{\Phi^P (\R\times \{(\q_n,\p_n)\})} \cup \{ (\q,\p),(\tilde \q,\tilde \p)\}, \quad (\q_n,\p_n) \in W^u((\q,\p);P)\cap W^s ((\tilde \q,\tilde \p);P).$$
By pre-compactness we can assume that $(\q_n,\p_n) \to (\q_\infty,\p_\infty)$, and the continuity of $\Phi^P$ would yield 
$$\Phi^P(\cdot, (\q_n,\p_n)) \to \Phi^P ( \cdot, (\q_\infty,\p_\infty))$$
uniformly on compact subsets of $\R$. However, it may happen that $(\q_\infty,\p_\infty)\not \in W^u((\q,\p);P)$ or $(\q_\infty,\p_\infty) \not \in W^s((\tilde \q,\tilde \p);P)$, 
so $\Phi^P(\cdot,(\q_\infty,\p_\infty))$ could be a flow line connecting two other rest points, and the convergence would not be uniform on $\R$. In this case, $S_n$ converges up to a subsequence
to a \textit{broken flow line} from $(\q,\p)$ to $(\tilde \q,\tilde \p)$ in the Hausdorff distance, which we recall is the union 
$$\mathbb S = \mathbb S_1 \cup ... \cup \mathbb S_k,$$ 
where each $\mathbb S_i$ is the closure of a flow line from $\xi_{i-1}$ to $\xi_i$, with 
$$(\q,\p) = \xi_0 \neq \xi_1 \neq ... \neq \xi_{k-1} \neq \xi_k = (\tilde \q,\tilde \p) \in \text{crit}\, (\A_H).$$
It is straightforward to check that a compact set $\mathbb S\subset \M^{1-s}$ is a broken from line from $(\q,\p)$ to $(\tilde \q,\tilde \p)$  if and only if the following three conditions are satisfied:
\begin{enumerate}
\item[i)] $(\q,\p), (\tilde \q,\tilde \p)\in \mathbb S$.
\item[ii)] $\mathbb S$ is $\Phi^P$-invariant.
\item[iii)] the intersection $\mathbb S \cap \A_H^{-1}(c)$
is a singleton if $c \in [\A_H(\q,\p),\A_H(\tilde \q,\tilde \p)]$ and is empty otherwise. 
\end{enumerate}

\begin{prop}
\label{prop:brokenflowlines}
Let $(\q_n,\p_n) \subset W^u((\q,\p);P)\cap W^s ((\tilde \q,\tilde \p);P)$, and set 
$$S_n:= \overline{\Phi^P (\R\times \{(\q_n,\p_n)\})} \cup \{ (\q,\p),(\tilde \q,\tilde \p)\}.$$
 Then, $(S_n)$ has 
a subsequence which converges to a broken flow line from $(\q,\p)$ to $(\tilde \q,\tilde \p)$ in the Hausdorff distance. \qed
\end{prop}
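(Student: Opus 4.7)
The plan is to construct the limiting broken flow line by repeatedly extracting subsequences using the pre-compactness of $W^u((\q,\p);P)\cap W^s((\tilde\q,\tilde\p);P)$ given by Theorem~\ref{thm:transverseintersection}, (4), and then verify the three characterizing conditions i), ii), iii) listed just before the statement.

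First, I would choose a countable dense set of regular values $\{c_j\}_{j\in\N}\subset (\A_H(\tilde\q,\tilde\p),\A_H(\q,\p))$ for $\A_H$ restricted to $W^u((\q,\p);P)\cap W^s((\tilde\q,\tilde\p);P)$, and for each $n$ and each $j$ pick times $t_{n,j}\in\R$ with $\A_H(\Phi^P(t_{n,j},(\q_n,\p_n)))=c_j$; uniqueness is guaranteed because $\A_H$ is strictly decreasing along flow lines of $P$ (since $P$ is a negative pseudo-gradient and $\A_H$ is a Lyapounov function). Since the intersection is pre-compact, a diagonal extraction gives a subsequence (still denoted $(\q_n,\p_n)$) such that, for every $j$, $\Phi^P(t_{n,j},(\q_n,\p_n))\to p_j\in\M^{1-s}$ as $n\to\infty$. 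Setting $\mathbb{S}:=\{(\q,\p),(\tilde\q,\tilde\p)\}\cup\bigcup_{j\in\N}\overline{\Phi^P(\R\times\{p_j\})}$, I would show that $\mathbb S$ is precisely the Hausdorff limit of (a further subsequence of) the $S_n$.

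Next, I would verify that $\mathbb S$ is a broken flow line. Condition i) is automatic. For condition ii), the flow-invariance follows from the continuity of $\Phi^P$ applied to each orbit closure, together with the fact that $\omega$-limits and $\alpha$-limits of orbits through the $p_j$ are rest points of $P$ — this uses the Palais-Smale condition for the pair $(\A_H,P)$ from Theorem~\ref{thm:transverseintersection}, (3), as orbits with bounded action whose gradient does not vanish must accumulate at $\mathrm{crit}\,(\A_H)=\mathrm{sing}\,(P)$. For condition iii), the key point is that $\mathbb S\cap \A_H^{-1}(c)$ is a singleton for every regular value $c$ in the relevant interval: existence follows because the $S_n$ each meet $\A_H^{-1}(c)$ in exactly one point (strict monotonicity of $\A_H$ along orbits), and these points have a limit in $\mathbb S$ by pre-compactness; uniqueness follows because two distinct points in $\mathbb S$ at the same action level would force distinct orbits of $P$ inside a single level set, contradicting the Lyapounov property. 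Writing $\mathbb S$ as a union of closures of orbits connecting consecutive rest points ordered by action then yields the required chain $(\q,\p)=\xi_0\neq\xi_1\neq\cdots\neq\xi_k=(\tilde\q,\tilde\p)$, with finiteness of $k$ following from the fact that the action values of critical points in the bounded action window form a discrete set (since the critical set is pre-compact by the Palais-Smale condition and critical points are hyperbolic hence isolated).

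Finally, I would upgrade ordinary pointwise convergence to Hausdorff convergence of $S_n$ to $\mathbb S$. For the ``$\subset$'' inclusion, any accumulation point of $S_n$ would, by a subsequence argument parametrizing by action values via a suitable $c_j$, be forced to lie in $\mathbb S$. For the ``$\supset$'' inclusion, any point $q\in\mathbb S$ lies on some orbit piece through $p_j$ (at some fixed time shift from $p_j$), and can be approximated by $\Phi^P(t_{n,j}+\tau,(\q_n,\p_n))\in S_n$ using continuity of $\Phi^P$ on compact time intervals. Approximation near the rest points $\xi_i$ uses the standard local stable/unstable manifold estimates together with the fact that $S_n$ meets every $\A_H$-level near $\A_H(\xi_i)$ in a point that must lie in a small neighborhood of $\xi_i$ (otherwise one would produce a fourth cluster point of $S_n$ distinct from the $p_j$ with the same action, contradicting the extracted diagonal subsequence).

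The main obstacle I anticipate is the finiteness of the number of ``breaks'' $k$ and the careful bookkeeping between the chosen regular action values $\{c_j\}$ and the actual rest points appearing as $\xi_i$; one has to rule out an infinite cascade of intermediate breakings, which ultimately rests on the uniform action bound together with the discreteness of the action spectrum of $\A_H$ in any bounded window, a consequence of the Palais-Smale condition for $(\A_H,P)$ and hyperbolicity of all critical points of $\A_H$.
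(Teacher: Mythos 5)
Note first that the paper does not prove this proposition at all: it invokes \cite[Section 8]{AM:05}, remarking only that the argument uses nothing beyond the pre-compactness of $W^u((\q,\p);P)\cap W^s((\tilde\q,\tilde\p);P)$. Your overall strategy (diagonal extraction over a countable dense set of action levels, then verification of conditions i)--iii)) is indeed the standard one behind that reference, so the architecture is fine. The problem is that the crucial step -- condition iii), that the limit set meets each level $\A_H^{-1}(c)$ in at most one point -- is justified by a non sequitur. Two distinct points of $\mathbb S$ at the same action level, lying on two distinct orbits along each of which $\A_H$ is strictly decreasing, do not ``force distinct orbits of $P$ inside a single level set''; nothing in the Lyapounov property is contradicted. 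Concretely, your argument as written does not exclude that the $S_n$ accumulate onto the union of two different chains $(\q,\p)\to\xi_1\to(\tilde\q,\tilde\p)$ and $(\q,\p)\to\xi_2\to(\tilde\q,\tilde\p)$: such a union is compact, flow-invariant, connected, contains both endpoints, and is perfectly compatible with everything you have established, yet it is not a broken flow line. The same gap reappears in your ``existence'' step (why should the limit of the level-$c$ crossing points lie on an orbit through some $p_j$ when $c\notin\{c_j\}$?) and in the final paragraph, where the exclusion of a ``fourth cluster point'' is asserted rather than proved.

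What is actually needed is the quantitative use of pre-compactness together with the Palais--Smale and Lyapounov properties of $(\A_H,P)$: on the compact set $K:=\overline{W^u((\q,\p);P)\cap W^s((\tilde\q,\tilde\p);P)}$, the function $-\diff\A_H[P]$ has a positive lower bound on $K\cap\A_H^{-1}([c-\epsilon,c+\epsilon])$ away from the (finitely many) rest points with action near $c$. This yields a bound on the time an orbit can spend in such an action band outside small balls around those rest points, and hence (via boundedness of $P$ on $K$) that two parameter values on the $n$-th flow line whose actions both converge to $c$ have distance converging to zero, possibly after absorbing the part spent near rest points. Only this kind of estimate shows that limits of points at the same level coincide, that at a critical level the limit is either the unique nearby rest point or the unique limit of the crossing points, and that $S_n\to\mathbb S$ in the Hausdorff distance; it is also what makes your set $\mathbb S$ (a priori a countable, not obviously closed, union of orbit closures) reduce to a finite chain ordered by action. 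Your finiteness argument for $k$ (compactness of the critical set in an action window from Palais--Smale, plus isolation from hyperbolicity) is fine, but it only kicks in after the singleton-per-level property has been genuinely established.
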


It is worth noticing that the proof of the proposition above, which is given in \cite[Section 8]{AM:05}, uses only the pre-compactness of the intersection 
$W^u((\q,\p);P)\cap W^s ((\tilde \q,\tilde \p);P)$, and hence in particular it is independent of the other properties of the triple $(\A_H,\M^{1-s},\mathcal E)$.


\subsection{Intersections of dimension 1 and 2}
We fix $P\in \mathcal P$, where $\mathcal P$ is given by \eqref{perturbations}, and consider the quadruple $(\A_H,\M^{1-s},P,\mathcal E)$. The Morse-Smale condition 
up to order zero (c.f. Theorem \ref{thm:transverseintersection}, (5)) implies that for a broken flow line from $(\q,\p)$ to $(\tilde \q,\tilde \p)$ we have 
$$\text{m}(\q,\p) > \text{m}(\xi_1) > ... > \text{m}(\xi_{k-1})> \text{m}(\tilde \q,\tilde \p),$$ 
where m$(\cdot) =$m$(\cdot,\mathcal E)$ denotes the relative Morse index with respect to the (0)-essential sub-bundle $\mathcal E$. 

We assume henceforth that $(\q,\p)$ and $(\tilde \q,\tilde \p)$ are critical points of $\A_H$ with 
$$\text{m}(\q,\p) - \text{m}(\tilde \q,\tilde \p) \in \{1,2\}.$$
In this case, Theorem \ref{thm:transverseintersection} implies that 
$$W^u((\q,\p);P)\cap W^s ((\tilde \q,\tilde \p);P)$$
is a submanifold of $\M^{1-s}$ of dimension 1 resp. 2 with compact closure. 

In the former case, we readily see that the intersection consists of finitely many connected components. Indeed 
each connected component is a flow line from  $(\q,\p)$ to $(\tilde \q,\tilde \p)$, and the set $C$ of their closures is discrete in the Hausdorff distance. On the other hand, by the Morse-Smale property 
at order zero, these are the only 
broken flow lines from $(\q,\p)$ to $(\tilde \q,\tilde \p)$, and hence $C$ is compact by Proposition \ref{prop:brokenflowlines}. Notice that the restriction of the flow $\Phi^P$ to the closure 
of a component of the intersection is conjugated to the shift flow on $\overline \R$:
$$\R\times \overline \R \ni (t,u) \mapsto u+t \in \overline \R.$$

In the latter case, the quotient of a connected component $W$ of the intersection by the $\R$-action, $W/\R$, is either a circle or an open interval. In other words, $W$ is described by a 
one-parameter family of flow lines $u_\lambda$, where $\lambda$ ranges in $S^1$ or in $(0,1)$. In the first case, one easily verifies that $\overline W = W\cup \{(\q,\p),(\tilde \q,\tilde \p)\}$ is 
homeomorphic to a 2-sphere, and that $\Phi^P|_{\overline W}$ is conjugated to the exponential flow on the Riemann sphere. In the second case, we see using Proposition \ref{prop:brokenflowlines}
 that $\overline W\setminus W$ consists of broken flow lines which have precisely one intermediate rest point. Then, $\Phi^P|_{\overline W}$ is semi-conjugated to the product of two shift-flows on $\overline \R$.
 
 \begin{prop}[Semi-conjugacy]
 \label{prop:productshift}
 Let $(\q,\p)$ and $(\tilde \q,\tilde \p)$ be two critical points of $\A_H$ with 
$$\text{m}(\q,\p) - \text{m}(\tilde \q,\tilde \p) =2,$$
and let $W$ be a connected component of $W^u((\q,\p);P)\cap W^s ((\tilde \q,\tilde \p);P)$ such that $W/\R$ is an open interval. Then, there exists a 
continuous surjective map 
$$h:\overline \R\times \overline \R\to \overline W$$ 
such that the following hold:
\begin{enumerate}
\item $\Phi^P_t (h(u,v)) = h(u+t,v+t)$ for every $(u,v)\in \overline \R\times \overline \R$ and every $t\in \R$. 
\item $h(\R^2)=W$, and there exists critical points $\xi,\xi'$ of $\A_H$ with 
$$\text{m}(\xi)=\text{m}(\xi') = \text{m}(\q,\p)-1,$$ 
and connected components $W_1,W_2,W_1',W_2'$ of $W^u((\q,\p);P)\cap W^s(\xi;P)$, $W^u(\xi;P)\cap W^s ((\tilde \q,\tilde \p);P)$, and
$W^u((\q,\p);P)\cap W^s(\xi';P)$, $W^u(\xi';P)\cap W^s ((\tilde \q,\tilde \p);P)$ respectively, such that 
$$W_1\cup W_2 \neq W_1'\cup W_2',$$ 
and 
\begin{align*}
h (\R\times \{-\infty\}) &= W_1, \quad h(\{+\infty\} \times \R) = W_2, \\
h(\{-\infty\} \times \R) &= W_1', \quad h (\R\times \{+\infty\}) = W_2'.
\end{align*}
\item The restrictions of $h$ to $\R^2$, $\{\pm \infty\}\times \R$, and $\R\times \{\pm\infty\}$, are diffeomorphisms of class $C^1$. \qed
\end{enumerate}
 \end{prop}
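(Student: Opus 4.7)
The first step is to exploit the hypothesis $W/\R \cong (0,1)$ by parametrizing the $1$-dimensional quotient via a $C^1$ diffeomorphism $\lambda \mapsto [\gamma_\lambda]$ of $\R$ onto $W/\R$, with representatives $\gamma_\lambda : \R \to W$ normalized so that $\A_H(\gamma_\lambda(0)) = c_0$ for some fixed regular value $c_0$ strictly between $\A_H(\tilde\q,\tilde\p)$ and $\A_H(\q,\p)$. Since $P$ is of class $C^2$ and the level set $\A_H^{-1}(c_0) \cap \overline{W}$ is a transverse section by the Morse--Smale condition of order $2$ (Theorem~\ref{thm:transverseintersection}(5)), the map $\lambda \mapsto \gamma_\lambda(0)$ is a $C^1$ diffeomorphism onto a $C^1$ arc. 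Applying Proposition~\ref{prop:brokenflowlines} as $\lambda \to \pm\infty$, and noting that relative Morse indices must strictly decrease along a broken flow line, each subsequential limit is a broken flow line with exactly one intermediate rest point of index $\text{m}(\q,\p) - 1$; denote these configurations by $\xi, W_1, W_2$ (for $\lambda \to -\infty$) and $\xi', W_1', W_2'$ (for $\lambda \to +\infty$). The standard gluing theorem for Morse--Smale flows provides, near each such broken configuration, a locally unique $1$-parameter family of unbroken flow lines approaching it; hence $W_1 \cup W_2 = W_1' \cup W_2'$ would cause these two local families to match up into a loop in $W/\R$, giving $W/\R \cong S^1$, contrary to assumption.

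Next, I would construct $h: \R^2 \to W$ by the formula
\begin{equation*}
h(u,v) := \Phi^P_u\bigl(\gamma_{\mu(v-u)}(0)\bigr),
\end{equation*}
for a suitable self-diffeomorphism $\mu$ of $\R$ to be fixed below. The equivariance $\Phi^P_t(h(u,v)) = h(u+t, v+t)$ is immediate from the flow property, and $C^1$-smoothness on $\R^2$ follows from smooth dependence of $\Phi^P$ on initial conditions together with the $C^1$-regularity of $\lambda \mapsto \gamma_\lambda(0)$. To extend $h$ continuously to $\overline{\R} \times \overline{\R}$, one chooses $\mu$ so that, as $v - u \to -\infty$, the flow line $\gamma_{\mu(v-u)}$ is reparametrized to track the first broken limit: its ``first-piece time'' (the time shift converging to the parametrization of $W_1$) equals $u$, and its ``second-piece time'' equals $v$; analogously as $v - u \to +\infty$ with the primed configuration. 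With this choice the extended map satisfies $h(u,-\infty) \in W_1$ parametrized by $u$ and $h(+\infty,v) \in W_2$ parametrized by $v$, with the mirror statements on the other side; surjectivity onto $\overline{W}$ then follows from Hausdorff-continuity of $\lambda \mapsto \overline{\gamma_\lambda(\R)}$ and Proposition~\ref{prop:brokenflowlines}.

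\textbf{The main obstacle} is the continuity and $C^1$-regularity of $h$ across the broken ends, i.e., the very existence of the reparametrization $\mu$ and the $C^1$ character of the boundary restrictions. This is the infinite-dimensional gluing theorem: starting from a pre-glued approximate trajectory built from $W_i, W_i'$ and a short flow segment near $\xi$ resp.\ $\xi'$, one runs a Newton iteration in a Banach space of $C^0$-sections along the approximate trajectory to produce a genuine trajectory in $W$ near the broken configuration, and then inverts the resulting gluing parameter to obtain $\mu$. The $C^2$-regularity of $P$ provides the required quadratic estimates; the Morse--Smale condition of order $2$ (Theorem~\ref{thm:transverseintersection}(5)) combined with Lemma~\ref{lem:morsesmalegeneral1}(1) guarantees that the linearization of the flow equation at each broken configuration is a surjective Fredholm operator of index $2$ between the appropriate function spaces; and the $P$-positive invariance of the essentially vertical family $\mathcal F$ (Theorem~\ref{thm:transverseintersection}(4)) furnishes the pre-compactness needed for the iteration to converge and for the Hausdorff-convergence lemmas used in the identification of the boundary pieces. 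With these three ingredients the standard scheme of \cite[Sections 10--11]{AM:05} applies verbatim and yields all the asserted properties of $h$.
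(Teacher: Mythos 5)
Your overall strategy (reduce everything to the broken-flow-line compactness of Proposition~\ref{prop:brokenflowlines} and to the analysis of [AM:05]) is in line with the paper, which in fact offers no proof of its own but refers to [AM:05, Section 11]. However, the explicit construction you insert in the middle is structurally wrong. With $h(u,v):=\Phi^P_u\bigl(\gamma_{\mu(v-u)}(0)\bigr)$ and the base points $\gamma_\lambda(0)$ normalized on a fixed action level $c_0$, for fixed $u$ the limits as $v\to\pm\infty$ are $\Phi^P_u$ applied to the limits of the $c_0$-level base points; hence both horizontal edges $\R\times\{\pm\infty\}$ would be parametrizations of the pieces of the two broken limits that cross the level $c_0$ (i.e.\ $W_1$ and $W_1'$ if $c_0>\max\{\A_H(\xi),\A_H(\xi')\}$, and no admissible choice of $c_0$ fixes this, since the same level is used at both ends), while the edge $\{-\infty\}\times\R$ collapses to the corner $(\q,\p)$. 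The proposition instead demands $h(\R\times\{+\infty\})=W_2'$ and $h(\{-\infty\}\times\R)=W_1'$: the corner structure ($\xi$ at $(+\infty,-\infty)$, $\xi'$ at $(-\infty,+\infty)$) forces the two asymptotic time coordinates to exchange roles between the two ends of $W/\R$ — near one broken limit $u$ must record progress along the upper piece and $v$ along the lower piece, near the other limit the roles are reversed. No map of the form $\Phi^P_u(g(v-u))$ can do that, so the reparametrization $\mu$ you postulate does not exist. A correct construction uses two independent crossing-time coordinates (with respect to a section above and a section below the intermediate critical levels, suitably interpolated in the middle of the family); this is precisely the delicate content of [AM:05, Section 11].

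A second gap is that the entire analytic core — the existence and local uniqueness of the one-parameter family of unbroken orbits near each broken configuration, and the $C^1$ regularity of $h$ up to the boundary — is asserted to follow ``verbatim'' from [AM:05, Sections 10--11] via a Newton iteration along pre-glued trajectories. That is a misattribution: Abbondandolo--Majer do not glue; their argument is dynamical, based on the local structure of the flow near the hyperbolic intermediate rest point and on the two-dimensionality of $W$. If you want to replace it by a Floer-style gluing you must set it up yourself in the spaces $C^1_0(\R,\HH)$, $C^0_0(\R,\HH)$ of Lemma~\ref{lem:morsesmalegeneral1} (pre-gluing, exponential decay at hyperbolic rest points, right inverses uniform in the gluing parameter, and surjectivity of gluing, which your argument for $W_1\cup W_2\neq W_1'\cup W_2'$ also uses); none of this is available off the shelf in the paper or in [AM:05]. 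Note finally that the inequality $W_1\cup W_2\neq W_1'\cup W_2'$ only excludes the \emph{simultaneous} coincidence: as the paper remarks, $\xi=\xi'$ and even $W_1=W_1'$ or $W_2=W_2'$ can occur, so your circle argument must be phrased so that it only rules out coincidence of the whole broken configuration.
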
 

The proof of Proposition \ref{prop:productshift}, as well as of Proposition \ref{prop:productshift2} below, can be found in \cite[Section 11]{AM:05}. Here we just notice that, in Claim (2) it may 
happen that $\xi=\xi'$, and in this case even that $W_1=W_1'$ or $W_2=W_2'$,  but the last two identities cannot hold simultaneously. When $\xi\neq \xi'$, $h$ is injective, so it is a conjugacy.

\begin{prop}
\label{prop:productshift2}
Let $(\q,\p)$, $(\bar \q,\bar \p)$, and $(\tilde \q,\tilde \p)$ be critical points of $\A_H$ with 
$$\text{m}(\q,\p)=  \text{m}(\bar \q,\bar \p) +1 = \text{m}(\tilde \q,\tilde \p) +2,$$
and let $W_1,W_2$ be connected components of $W^u((\q,\p);P)\cap W^s ((\bar \q,\bar \p);P)$ and $W^u((\bar \q,\bar \p);P)\cap W^s ((\tilde \q,\tilde \p);P)$ respectively. Then, there exists 
a unique connected component $W$ of $W^u((\q,\p);P)\cap W^s ((\tilde \q,\tilde \p);P)$ such that $\overline{W_1\cup W_2}$ belongs to the closure of $W$ with respect to the Hausdorff distance. \qed
\end{prop}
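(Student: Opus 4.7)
The statement is a classical gluing theorem: any broken flow line with one intermediate critical point is the Hausdorff limit of a one-parameter family of unbroken flow lines, and this family lies in a uniquely determined two-dimensional component of $W^u((\q,\p);P) \cap W^s((\tilde\q,\tilde\p);P)$. My plan is to establish existence of $W$ via a Newton/implicit-function-theorem gluing argument, and to deduce uniqueness from the semi-conjugacy description of Proposition~\ref{prop:productshift}.

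For the existence step, I would fix parametrized representatives $u_1 \in W_1$ and $u_2 \in W_2$ normalized so that $u_1(s), u_2(-s) \to (\bar\q,\bar\p)$ as $s \to +\infty$; by hyperbolicity of $(\bar\q,\bar\p)$ both converge at an exponential rate. For large $R>0$, I would construct a pre-glued approximate flow line $u_1 \#_R u_2 : \R \to \M^{1-s}$ by interpolating, in a local chart around $(\bar\q,\bar\p)$, between $u_1(\cdot - R)$ and $u_2(\cdot + R)$ with a smooth cutoff. The resulting defect $\partial_t(u_1 \#_R u_2) - P(u_1 \#_R u_2)$ has norm $O(e^{-cR})$ in an appropriate exponentially weighted function space. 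To correct this to a genuine flow line I would invoke the implicit function theorem: by Theorem~\ref{thm:transverseintersection}(5) and Lemma~\ref{lem:morsesmalegeneral1}(2), the linearizations $D^f_2\Phi$ along $u_1$ and $u_2$ are surjective with bounded right inverses, and a standard cutoff-and-patch construction produces an approximate right inverse at $u_1 \#_R u_2$ whose operator norm is uniformly bounded in $R$. A Newton iteration then yields, for every $R \geq R_0$, a true flow line $v_R \in W^u((\q,\p);P) \cap W^s((\tilde\q,\tilde\p);P)$ whose trace is Hausdorff-close to $\overline{W_1 \cup W_2}$, with $\overline{\R\cdot v_R} \cup \{(\q,\p),(\tilde\q,\tilde\p)\}$ converging to $\overline{W_1 \cup W_2}$ as $R \to +\infty$. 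Taking $W$ to be the connected component of the intersection containing the $v_R$'s then gives existence.

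The main obstacle I anticipate is the uniform invertibility of $D^f_2 \Phi$ along the pre-glued family $u_1 \#_R u_2$: because our stable and unstable manifolds are simultaneously infinite-dimensional and -codimensional, Fredholmness holds only in the sense of relative indices defined via the (0)-essential subbundle $\mathcal E$. Concretely, one must upgrade the $C^0_0$-setting of the local operator $D_A$ in~\eqref{localrep} to exponentially weighted Banach spaces, so that the right inverses constructed near $u_1$ and $u_2$ have $R$-independent norms. This is made possible by the hyperbolic spectral gap at $(\q,\p), (\bar\q,\bar\p), (\tilde\q,\tilde\p)$ together with the positive invariance of $\mathcal F$ under the flow of $P$ (Theorem~\ref{thm:positiveinvariance}), which are the two ingredients needed to reduce the argument to the finite Morse index gluing framework developed in~\cite[Section 10]{AM:05}.

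For uniqueness, I argue by contradiction. Suppose $W \neq W'$ are two distinct components of $W^u((\q,\p);P) \cap W^s((\tilde\q,\tilde\p);P)$ both containing $\overline{W_1 \cup W_2}$ in their Hausdorff closures. Neither $W/\R$ nor $W'/\R$ can be a circle, since otherwise the closure would be a 2-sphere with no broken ends, as noted in the discussion preceding Proposition~\ref{prop:productshift}. Hence Proposition~\ref{prop:productshift} provides continuous surjections $h : \overline\R\times\overline\R \to \overline W$ and $h' : \overline\R\times\overline\R \to \overline{W'}$, and $\overline{W_1\cup W_2}$ corresponds in each case to one of the four boundary edges $\{\pm\infty\}\times\overline\R$ or $\overline\R\times\{\pm\infty\}$. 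Along such an edge $h$ and $h'$ restrict to $C^1$ diffeomorphisms, producing two one-parameter families of unbroken flow lines in $W$ and $W'$ respectively that converge in Hausdorff distance to $\overline{W_1 \cup W_2}$. However, the uniqueness statement in the implicit function theorem from the gluing step implies that, up to the $\R$-action, there is a unique flow line of $W^u((\q,\p);P) \cap W^s((\tilde\q,\tilde\p);P)$ in any sufficiently small Hausdorff neighborhood of $\overline{W_1 \cup W_2}$; hence the two approaching families must coincide for large $R$, forcing $W = W'$, a contradiction.
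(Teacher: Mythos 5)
Your proposal takes a genuinely different route from the paper: the text does not prove Proposition~\ref{prop:productshift2} directly but refers to \cite[Section 11]{AM:05}, where the statement is established by a dynamical argument (local analysis of the flow near the intermediate hyperbolic rest point, transversality and a graph-transform/$\lambda$-lemma type mechanism, leading to the semi-conjugacy of Proposition~\ref{prop:productshift}), whereas you propose a Floer-style pre-gluing plus Newton iteration. The existence half of your argument is plausible in this setting: the linearizations along $u_1$ and $u_2$ are Fredholm of index one between $C^1_0$ and $C^0_0$ by Lemma~\ref{lem:morsesmalegeneral1}, surjective by the Morse--Smale property of Theorem~\ref{thm:transverseintersection}(5), and exponential convergence at the hyperbolic rest points makes the usual cutoff-and-patch approximate right inverse and quadratic estimates available (here $P$ is $C^2$ since $s\in(1/2,3/4)$).

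The genuine gap is in the uniqueness step. The implicit function theorem only gives uniqueness of the corrected solution inside a small ball, in your weighted norm, \emph{around the specific pre-glued curve} $u_1\#_R u_2$. The proposition requires something stronger: that \emph{every} flow line from $(\q,\p)$ to $(\tilde\q,\tilde\p)$ whose closure is Hausdorff-close to $\overline{W_1\cup W_2}$ is, after a time shift, norm-close to some $u_1\#_R u_2$ with a correction small enough to fall into the IFT uniqueness ball. Hausdorff proximity of the trajectory as a subset of $\M^{1-s}$ does not by itself give closeness in the parametrized functional-analytic sense: one must show that such a trajectory spends a long, quantitatively controlled time near $(\bar\q,\bar\p)$, decompose it into two pieces $C^0$-close to $u_1$ and $u_2$, and use exponential decay estimates near the intermediate rest point to convert this into smallness in the weighted norm. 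This ``surjectivity of gluing'' argument is precisely the content you assert in the sentence beginning ``However, the uniqueness statement in the implicit function theorem\dots'', and it does not follow from the IFT as quoted; without it, two distinct components $W\neq W'$ could a priori both accumulate on $\overline{W_1\cup W_2}$ through families that are Hausdorff-close to the broken line but not captured by your one-parameter glued family. Supplying this step (or replacing it by the local flow-box/$\lambda$-lemma analysis near $(\bar\q,\bar\p)$ as in \cite[Section 11]{AM:05}) is what is needed to complete the proof.
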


\subsection{The boundary homomorphism}
As in the previous subsections, we consider a tuple $(\A_H,\M^{1-s},P,\mathcal E)$ for some $P\in \mathcal P$. 
For every $k\in \Z$ we set $C_k(P)$ to be the $\Z_2$-vector space generated by the critical points of $\A_H$ with relative Morse index $k$. We show now how to define on $C_k(P)$ 
a boundary operator $\partial_k$. Notice preliminarly that, by Proposition \ref{lem:compactness}, the action of critical points of $\A_H$ is uniformly bounded from below, and 
hence in particular every interval of the form $(-\infty,a]$ contains only finitely many critical points of $\A_H$. 

For any pair of critical points $(\q,\p)$, $(\tilde \q,\tilde \p)$ of $\A_H$ such that 
$$\text{m}(\q,\p) =  \text{m}(\tilde \q,\tilde \p)+1= k$$
we define 
$$\sigma \big ((\q,\p), (\tilde \q,\tilde \p) \big ) := \# \Big \{ \text{c.c. of } W^u((\q,\p);P)\cap W^s ((\tilde \q,\tilde \p);P)\Big \} \ \ \text{modulo} \ 2$$
and
$$\partial_k (\q,\p):= \sum_{k-1} \ \sigma \big ((\q,\p), (\tilde \q,\tilde \p) \big )\cdot (\tilde \q,\tilde \p),$$
where with slight abuse of notation $\sum_{k-1}$ denotes the sum over all critical points of $\A_H$ with relative Morse index $k-1$. By the observation above, the sum is indeed a finite sum. 

\begin{thm}
For every $k\in \Z$ we have $\partial_{k-1}\circ \partial_k =0$. 
\label{thm:boundaryoperator}
\end{thm}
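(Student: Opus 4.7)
The plan is to compute the coefficient of $(\tilde{\q},\tilde{\p})$, for a critical point with $\mathrm{m}(\tilde{\q},\tilde{\p})=k-2$, in $\partial_{k-1}\circ \partial_k(\q,\p)$ and to identify it, modulo $2$, with twice the number of open-interval components of the quotient $\mathcal{W}/\R$, where
\[
\mathcal{W} := W^u((\q,\p);P)\cap W^s((\tilde{\q},\tilde{\p});P).
\]
By Theorem~\ref{thm:transverseintersection}(5) the set $\mathcal{W}$ is a $2$-dimensional $C^1$-manifold with compact closure (Theorem~\ref{thm:transverseintersection}(4)), and the $\R$-action by the flow $\Phi^P$ is free on $\mathcal{W}$. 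The quotient $\mathcal{W}/\R$ is therefore a $1$-dimensional manifold, and its connected components are either circles or open intervals; by compactness of $\overline{\mathcal W}$ and by Proposition~\ref{prop:brokenflowlines} (which forces any Hausdorff limit of distinct flow lines to be a broken flow line, creating only finitely many accumulation loci because the action values of intermediate rest points are bounded in $[\A_H(\tilde{\q},\tilde{\p}),\A_H(\q,\p)]$ and only finitely many rest points lie in such an interval), there are only finitely many such components.

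Unfolding the definition of $\partial$ gives
\[
\partial_{k-1}\circ \partial_k(\q,\p) \;=\; \sum_{\tilde{\q},\tilde{\p}}\Bigl(\sum_{\xi}\sigma\bigl((\q,\p),\xi\bigr)\cdot\sigma\bigl(\xi,(\tilde{\q},\tilde{\p})\bigr)\Bigr)\cdot(\tilde{\q},\tilde{\p})\quad\mathrm{mod}\ 2,
\]
where the inner sum ranges over critical points $\xi$ with $\mathrm{m}(\xi)=k-1$. For fixed $(\tilde{\q},\tilde{\p})$, the inner sum counts modulo $2$ the number of pairs $(W_1,W_2)$, with $W_1$ a connected component of $W^u((\q,\p);P)\cap W^s(\xi;P)$ and $W_2$ a connected component of $W^u(\xi;P)\cap W^s((\tilde{\q},\tilde{\p});P)$; equivalently, it counts the broken flow lines from $(\q,\p)$ to $(\tilde{\q},\tilde{\p})$ passing through a single intermediate rest point.

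The key step is now a double-count matching broken flow lines with ends of open-interval components of $\mathcal{W}/\R$. By Proposition~\ref{prop:productshift}, if $W$ is a connected component of $\mathcal{W}$ with $W/\R$ homeomorphic to an open interval, then its closure in the Hausdorff distance adds exactly two broken flow lines, one at each end of the interval, each of the form $W_1\cup W_2$ with a uniquely determined intermediate rest point. Conversely, by Proposition~\ref{prop:productshift2}, every pair $(W_1,W_2)$ as above arises as precisely one of the two ends of exactly one connected component $W$ of $\mathcal{W}$ whose quotient $W/\R$ is an open interval (the circle components have no ends and contribute nothing). This sets up a $2$-to-$1$ surjection from the set of open-interval components of $\mathcal{W}/\R$ onto the set of broken flow lines from $(\q,\p)$ to $(\tilde{\q},\tilde{\p})$ with one intermediate rest point. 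Therefore the total number of such broken flow lines is even, which is exactly the statement that the coefficient of $(\tilde{\q},\tilde{\p})$ vanishes in $\Z_2$.

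The main obstacle is not the algebra but the verification that the map ``$W\mapsto$ its two ends'' is genuinely $2$-to-$1$ and onto: surjectivity is Proposition~\ref{prop:productshift2}, while the fact that each end of an open-interval component contributes exactly one broken flow line (and each open interval has exactly two distinct ends, so no double-counting happens if $\xi=\xi'$ but $W_1\cup W_2\neq W_1'\cup W_2'$) is the content of Proposition~\ref{prop:productshift}. Once this bookkeeping is in place the conclusion $\partial_{k-1}\circ\partial_k=0$ follows by summing over $(\tilde{\q},\tilde{\p})$, noting that the sum defining $\partial_k(\q,\p)$ is finite since only finitely many critical points lie below $\A_H(\q,\p)$ in any bounded action window and the intersection with critical points of index $k-1$ is finite by the same compactness argument used above.
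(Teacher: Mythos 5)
Your argument is correct and is essentially the paper's proof: expand $\partial_{k-1}\circ\partial_k$ by linearity, identify the coefficient of an index-$(k-2)$ critical point with the mod-$2$ count of once-broken flow lines, and observe that these come in pairs because each open-interval component of the two-dimensional intersection contributes exactly two distinct broken limits (Proposition~\ref{prop:productshift}) while every such broken flow line is the boundary of exactly one such component (Proposition~\ref{prop:productshift2}). The paper states this more tersely, but the pairing mechanism and the finiteness considerations you invoke are the same.
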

\begin{proof}
We compute using linearity 
\begin{align*}
\partial_{k-1}\circ \partial_k (\q,\p) &= \partial_{k-1} \left ( \sum_{k-1}  \sigma \big ((\q,\p), (\tilde \q,\tilde \p) \big )\cdot (\tilde \q,\tilde \p)\right )\\
						&= \sum_{k-1} \sigma \big ((\q,\p), (\tilde \q,\tilde \p) \big )\cdot \partial_{k-1} (\tilde \q,\tilde \p)\\
						&=  \sum_{k-1} \sigma \big ((\q,\p), (\tilde \q,\tilde \p) \big )\cdot \sum_{k-2} \sigma \big ((\tilde \q,\tilde \p), (\bar \q,\bar \p)\big )\cdot (\bar \q,\bar \p)\\
						&= \sum_{k-2}  \Big ( \sum_{k-1} \sigma \big ((\q,\p), (\tilde \q,\tilde \p) \big )\cdot  \sigma \big ((\tilde \q,\tilde \p), (\bar \q,\bar \p)\big ) \Big )\cdot (\bar \q,\bar \p)\\
						&=0,
\end{align*}
as Proposition \ref{prop:productshift} implies that broken flow lines between two critical points whose relative Morse indices differ by two always come in pairs. 
\end{proof}

We call the pair $(C_*(P),\partial_*)$ the \textit{Morse complex} of the Hamiltonian action $\A_H$ with respect to the (0)-essential sub-bundle $\mathcal E$, and denote by $HM_*(T^*M,\mathcal E)$
the induced homology, which we hereafter refer to as the \textit{Hamiltonian Morse homology} of the cotangent bundle $T^*M$. As the notation suggests, the
Hamiltonian Morse homology of the cotangent bundle is independent of the choices of the perturbation $P\in \mathcal P$ and of the Hamiltonian function, as soon as the growth condition~\eqref{eq:growthcondition} is satisfied. In fact,
$HM_*(T^*M,\mathcal E)$ is isomorphic to the singular homology of the free loop space of $M$. These facts will be proved in the next section.


\section{Functoriality}

Goal of this section will be to prove the following 

\begin{thm}
Let $H_0,H_1:\T\times T^*M\to \R$, be two smooth Hamiltonians satisfying the growth condition~\eqref{eq:growthcondition} and such that every critical point of $\A_{H_0}$ and $\A_{H_1}$ is hyperbolic, and let $P_0,P_1$ be corresponding vector fields 
as in Theorem~\ref{thm:transverseintersection}. Then the Morse complexes $(C_*(P_0),\partial_*)$ and $(C_*(P_1),\partial_*)$ are isomorphic. In particular, the induced homology is independent 
both of the Hamiltonian and the vector field, and it can be therefore denoted by $HM_*(T^*M,\mathcal E)$.
Furthermore, $HM_*(T^*M,\mathcal E)$ is isomorphic to the singular homology of $\Lambda M$, the free loop space of $M$, thus also to the Floer homology of $T^*M$. 
\label{thm:functoriality}
\end{thm}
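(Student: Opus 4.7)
The plan is to follow the standard pattern of Morse-theoretic continuation, adapted to our infinite-dimensional stable/unstable manifolds. Given two admissible pairs $(H_0,P_0)$ and $(H_1,P_1)$, I would pick a smooth monotone cut-off $\beta:\R\to[0,1]$ with $\beta\equiv0$ on $(-\infty,-T]$ and $\beta\equiv1$ on $[T,+\infty)$, set $H_\sigma:=(1-\beta(\sigma))H_0+\beta(\sigma)H_1$, and interpolate the pseudo-gradients smoothly to a family $P_\sigma$ with $P_\sigma=P_j$ for $\pm\sigma\ge T$ and each $P_\sigma$ lying in the admissible class $\mathfrak X$ of Theorem~\ref{thm:transverseintersection}. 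For critical points $(\q_0,\p_0)\in\crit\A_{H_0}$ and $(\q_1,\p_1)\in\crit\A_{H_1}$ of equal relative Morse index I would count modulo $2$ the solutions $u:\R\to\M^{1-s}$ of $\dot u(\sigma)=P_\sigma(u(\sigma))$ asymptotic to $(\q_0,\p_0)$ at $-\infty$ and to $(\q_1,\p_1)$ at $+\infty$, obtaining a $\Z_2$-linear map $\Phi_{01}:C_*(P_0)\to C_*(P_1)$.

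The three pillars of Section~\ref{s:morsecomplex} must then be redone in the non-autonomous setting. Pre-compactness of the moduli spaces would extend Theorem~\ref{thm:precompactnessforsequences} once the essentially vertical family $\mathcal F$ is shown to be preserved by the non-autonomous flow of $P_\sigma$; this is obtained by discretizing the continuation in $\sigma$ into piecewise-autonomous blocks, applying Proposition~\ref{prop:module} on each block, and passing to the limit via the homogenization Theorem~\ref{thm:homogenization}. Transversality would be achieved by generically perturbing $P_\sigma$ on the compact slab $|\sigma|\le T$ within the class of perturbations preserving $\mathcal F$, in direct adaptation of Theorem~\ref{thm:morsesmalegeneral}. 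Analysing the ends of $1$-dimensional moduli spaces via Proposition~\ref{prop:brokenflowlines} (a rest point can break off either at $\sigma=-\infty$ along a $P_0$-trajectory or at $\sigma=+\infty$ along a $P_1$-trajectory, but not both) would yield the chain map identity $\partial_*\Phi_{01}=\Phi_{01}\partial_*$. Exchanging roles gives $\Phi_{10}:C_*(P_1)\to C_*(P_0)$, and a homotopy-of-homotopies argument produces a chain homotopy between $\Phi_{10}\circ\Phi_{01}$ and the continuation of a constant homotopy, which is itself chain-homotopic to the identity. This establishes the well-definedness of $HM_*(T^*M,\mathcal E)$ and its independence of the choices.

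For the comparison with $H_*(\Lambda M;\Z_2)$, having just established independence I am free to fix the Tonelli Hamiltonian $H(t,q,p)=\tfrac12|p|_q^2+V(t,q)$ with Legendre-dual Lagrangian $L(t,q,v)=\tfrac12|v|_q^2-V(t,q)$. The Lagrangian action $\mathcal S_L$ on $H^1(\T,M)$ is smooth, bounded below, satisfies the Palais-Smale condition, has finite-dimensional negative eigenspaces at its non-degenerate critical points, and its Morse complex computes $H_*(\Lambda M;\Z_2)$ by the classical work of Abbondandolo-Majer. The fibre-wise Legendre transform $\mathcal L:\gamma\mapsto(\gamma,\partial_v L(\cdot,\gamma,\dot\gamma))$ is a bijection $\crit\mathcal S_L\to\crit\A_H$ (critical points of $\mathcal S_L$ are smooth, hence in $\M^{1-s}$). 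I would construct a chain isomorphism $\Psi:CM_*(\mathcal S_L)\to C_*(P)$ by counting modulo $2$ the hybrid Abbondandolo-Schwarz-type configurations: a negative $H^1$-gradient line $\gamma:(-\infty,0]\to H^1(\T,M)$ of $\mathcal S_L$ asymptotic at $-\infty$ to a critical point $\gamma_-$, glued to a negative pseudo-gradient line $u:[0,+\infty)\to\M^{1-s}$ of $\A_H$ asymptotic at $+\infty$ to a critical point $(\q_+,\p_+)$, with matching condition $\pi_{1-s}(u(0))=\gamma(0)$ on the footpoint plus a graph-type condition on the fibre. The Morse index of $\gamma_-$ must equal the relative Morse index $\mathrm m(\mathcal L(\gamma_-),\mathcal E)$ by a spectral-flow computation in the spirit of Abbondandolo-Schwarz, so that $\Psi$ preserves degree; compactness would follow by the arguments of the first step, and the chain-map and chain-inverse properties by the usual degeneration analysis of $1$-dimensional hybrid moduli spaces. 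The isomorphism $HM_*(T^*M,\mathcal E)\cong HF_*(T^*M)$ then follows formally by comparing both sides with $H_*(\Lambda M;\Z_2)$ via the Abbondandolo-Schwarz theorem.

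The main obstacle lies not in the algebraic set-up but in the compactness and index theory for the hybrid and non-autonomous moduli spaces. The essentially vertical family $\mathcal F$ was designed against the autonomous flow of a single pseudo-gradient $P$, and verifying its stability under continuation in $\sigma$ and, more seriously, under gluing at $\sigma=0$ with the finite-Morse-index $H^1$-gradient flow of $\mathcal S_L$ (which lives on the quite different space $H^1(\T,M)$) requires a delicate matching of essential classes across the seam. A second subtle point is the identification of the relative Morse index $\mathrm m(\cdot,\mathcal E)$ with the classical Lagrangian Morse index, which dictates the degree shift between $C_*(P)$ and $CM_*(\mathcal S_L)$ and is essential for $\Psi$ to be a chain map of degree zero.
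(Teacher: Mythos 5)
Your route to comparing the two complexes is genuinely different from the paper's. You propose Floer--style continuation maps: interpolate $H_\sigma$, $P_\sigma$, count solutions of the non-autonomous equation $\dot u=P_\sigma(u)$, prove $\partial\Phi_{01}=\Phi_{01}\partial$ by breaking analysis, and invert up to chain homotopy by a homotopy of homotopies. The paper never leaves the autonomous framework: it forms the cone function $f(r,x)=\varphi(r)+\A_H(x)$ and the cone vector field on $\R\times\M^{1-s}$, checks that the whole machinery of Sections 3--6 applies to $(\{0\}\times\mathcal E,\ \bigcup_J J\times\mathcal F)$ on the product, and reads off the comparison map $\Psi$ as the off-diagonal block of the cone boundary operator; the identity $\partial^F\circ\partial^F=0$ gives the chain-map property with no gluing, and ordering generators by action shows $\Psi$ is upper triangular with $1$'s on the diagonal, hence a chain-level isomorphism (which is what Theorem~\ref{thm:functoriality} asserts -- your argument would only give an isomorphism in homology). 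Different Hamiltonians are treated in Theorem~\ref{thm:functoriality2} only through \emph{monotone} homotopies $H_0\le H_1$, so that the cone function stays a Lyapounov function, and the general case follows from the sandwich $H_0+c_-\le H_1\le H_0+c_+$; transitivity comes from an iterated cone on $\R^2\times\M^{1-s}$. Your second half (fixing a Tonelli Hamiltonian, comparing with the Palais complex of the Lagrangian action via hybrid Abbondandolo--Schwarz moduli spaces, and identifying the relative index with the Lagrangian Morse index) is the same strategy the paper sketches by referring to \cite{Abbondandolo:2006jf}.

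The genuine gap is in the analytic foundation of your continuation step. First, compactness: for a non-monotone interpolation the action $\A_{H_\sigma}(u(\sigma))$ along a continuation trajectory is not decreasing, its derivative containing the term $-\beta'(\sigma)\int_0^1\partial_\sigma H_\sigma\,\diff t$, and since $H_1-H_0$ grows linearly in $p$ whenever the magnetic potentials in~\eqref{eq:growthcondition} differ, you do not even get a priori action bounds for free; without them neither the Palais--Smale argument nor the essentially-vertical-set argument can start, and your sketch supplies no substitute (the paper's monotonicity hypothesis, and the remark showing that naive functoriality fails for general vector fields, are exactly where this issue is acknowledged and circumvented). Second, your chain-map and chain-homotopy identities need the statement that every broken configuration is reached from exactly one end of a one-dimensional non-autonomous moduli space, i.e.\ non-autonomous analogues of the compactness-plus-semi-conjugacy results (Theorem~\ref{thm:precompactnessforsequences}, Propositions~\ref{prop:productshift} and~\ref{prop:productshift2}); the paper deliberately avoids developing any such gluing-type result, while your route cannot. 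Your discretization-plus-homogenization idea (Proposition~\ref{prop:module}, Theorem~\ref{thm:homogenization}, property (iii) of Definition~\ref{dfn:evf}) plausibly handles the invariance of $\mathcal F$ under the non-autonomous flow, but that is only one ingredient of the missing package.
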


The theorem above is a consequence of the following more general functorial property of the Morse homology. 

\begin{thm}
The following statements hold:
\begin{enumerate}
\item Let $H$ be an Hamiltonian as in Theorem~\ref{thm:functoriality}, and let $P,\tilde P$ be two vector fields as in Theorem~\ref{thm:transverseintersection}. 
Then the Morse complexes $(C_*(P),\partial_*)$ and $(C_*(\tilde P),\partial_*)$ are isomorphic. In particular, the induced homology does not depend on the chosen vector field
and can be denoted by $HM_*(T^*M,\mathcal E, \A_{H})$. 
\item Suppose that $H_0$ and $H_1$ are Hamiltonians as in Theorem~\ref{thm:functoriality} such that $H_0\leq H_1$. Then, there is a sequence of homomorphisms of Abelian groups 
$$\phi_{H_0,H_1}: HM_k(T^*M,\mathcal E, \A_{H_0})\to HM_k (T^*M,\mathcal E, \A_{H_1}),\qquad \forall k\in \Z,$$
such that $\phi_{H_1,H_2}\circ \phi_{H_0,H_1}=\phi_{H_0,H_2}$ and $\phi_{H_0,H_0}=\mathrm{id}$ (actually, $\phi_{H_0,H_0+c}=\mathrm{id}$ for any $c\geq 0$).
\end{enumerate}
\label{thm:functoriality2}
\end{thm}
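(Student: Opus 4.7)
The plan is to follow the classical continuation scheme of Morse/Floer theory, adapted to the strongly indefinite setting with essential subbundles as in \cite{AM:05,AM:09}.

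For Part (1), both $P$ and $\tilde P$ share the same rest points and lie in the class $\mathcal P$ of Theorem~\ref{thm:transverseintersection}. First I would choose a smooth family $\{P_\sigma\}_{\sigma\in\R}$ in $\mathcal P$ with $P_\sigma=P$ for $\sigma\leq -T$ and $P_\sigma=\tilde P$ for $\sigma\geq T$; the naive interpolation $P_\sigma := (1-\chi(\sigma))P + \chi(\sigma)\tilde P$ for a monotone cut-off $\chi$ still preserves the essentially vertical family $\mathcal F$ at every fixed $\sigma$ by the module property (Proposition~\ref{prop:module}). For rest points $(\q_0,\p_0),(\q_1,\p_1)\in \mathrm{crit}(\A_H)$ I would introduce the moduli spaces of non-autonomous connecting orbits
$$\mathcal M\bigl((\q_0,\p_0),(\q_1,\p_1);\{P_\sigma\}\bigr) := \Bigl\{u:\R\to \M^{1-s}\,\Bigm|\, \dot u(\sigma)=P_\sigma(u(\sigma)),\ \lim_{\sigma\to \mp\infty} u(\sigma) = (\q_{0/1},\p_{0/1})\Bigr\}.$$
Adapting Sections~\ref{s:precompactness} and \ref{s:transversality} to this non-autonomous setting---the Fredholm analysis of Lemma~\ref{lem:morsesmalegeneral1} depends only on the asymptotic operators, hence applies unchanged---one expects that, after a generic perturbation of the homotopy, these spaces are finite-dimensional submanifolds of dimension $\mathrm{m}(\q_0,\p_0)-\mathrm{m}(\q_1,\p_1)$ with pre-compact image (note that no $\R$-action is quotiented out, since the equation is non-autonomous). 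Counting zero-dimensional components modulo $2$ would define a chain map $\Phi_{P,\tilde P}:C_*(P)\to C_*(\tilde P)$; compactification of the one-dimensional moduli spaces by broken trajectories (broken at exactly one end) yields the chain map identity. Reversing the homotopy and invoking a homotopy-of-homotopies argument as in \cite[\S11]{AM:05} would then identify $\Phi_{\tilde P,P}\circ\Phi_{P,\tilde P}$ with the identity up to chain homotopy, proving the quasi-isomorphism.

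For Part (2), I would repeat the construction with a monotone homotopy of Hamiltonians $\{H_\sigma\}_{\sigma\in\R}$ satisfying $\partial_\sigma H_\sigma\geq 0$, $H_\sigma=H_0$ for $\sigma\leq -T$, $H_\sigma=H_1$ for $\sigma\geq T$, together with an accompanying family $P_\sigma$ of vector fields of the type of Theorem~\ref{thm:transverseintersection} for $\A_{H_\sigma}$; the asymptotic data are now critical points of $\A_{H_0}$ at $-\infty$ and of $\A_{H_1}$ at $+\infty$. The critical new ingredient is an a priori action bound obtained by differentiating $\sigma\mapsto \A_{H_\sigma}(u(\sigma))$ along the non-autonomous flow:
$$\A_{H_0}(u(-\infty)) - \A_{H_1}(u(+\infty)) \ =\ \int_\R \|\nabla\A_{H_\sigma}(u(\sigma))\|^2\,\mathrm{d}\sigma \ +\ \int_\R \int_0^1 \partial_\sigma H_\sigma \circ u(\sigma)\,\mathrm{d}t\,\mathrm{d}\sigma \ \geq\ 0,$$
so that the monotonicity assumption together with the uniform lower bound on critical values (Theorem~\ref{lem:compactness}(5)) bounds both the $L^2$-energy of the trajectory and the action along it, supplying the compactness needed to define $\phi_{H_0,H_1}$ as a count of zero-dimensional moduli spaces. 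The identity $\phi_{H_0,H_0+c}=\mathrm{id}$ for $c\geq 0$ follows from a constant homotopy since adding a constant to $H$ leaves critical points, stable/unstable manifolds, and any pseudo-gradient unchanged. The composition identity $\phi_{H_1,H_2}\circ\phi_{H_0,H_1}=\phi_{H_0,H_2}$ would then follow from the standard stretching/gluing principle applied to the concatenation of two monotone homotopies.

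The main technical obstacle will be verifying that the compactness and transversality machinery of Sections~\ref{s:precompactness}--\ref{s:transversality} extends cleanly to non-autonomous vector fields: one needs an analogue of Theorem~\ref{thm:positiveinvariance} showing that $\mathcal F$ is positively invariant under the non-autonomous flow of $\{P_\sigma\}$, and an upgrade of the abstract transversality Theorem~\ref{thm:morsesmalegeneral} permitting perturbations of the homotopy itself rather than of a single vector field. Both extensions are expected to be essentially formal: applying the module property to the autonomous lift of the flow on $\R\times \M^{1-s}$ yields the invariance of $\mathcal F$ for $\sigma$-dependent convex combinations of $\mathcal F$-preserving fields, while the Sard--Smale/Fredholm scheme of Section~\ref{s:transversality} depends only on the asymptotic rest-point data at $\sigma=\pm\infty$, which is fixed. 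In the Hamiltonian-homotopy case of Part (2), the additional concern that trajectories escape to infinity in $\M^{1-s}$ during the homotopy is precisely what the monotonicity-driven action bound above rules out.
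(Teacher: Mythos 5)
Your route is the Floer-style continuation scheme (non-autonomous trajectories for a homotopy of vector fields, monotone homotopies of Hamiltonians, homotopy-of-homotopies and gluing), whereas the paper argues entirely within the autonomous machinery it has already built, via a mapping-cone construction: it forms the cone vector field $F(r,\cdot)=\chi(r)P+(1-\chi(r))\tilde P-\varphi'(r)\partial_r$ on $\R\times\M^{1-s}$ with Lyapunov function $\varphi(r)+\A_H$ (resp.\ $\chi(r)\A_{H_0}+(1-\chi(r))\A_{H_1}+\varphi(r)$, where the hypothesis $H_0\le H_1$, i.e.\ $\A_{H_0}\ge\A_{H_1}$, is exactly what makes this discontinuous function a Lyapunov function across $\{r=1/2\}$), checks that $(f,F)$ satisfies Palais--Smale, that $\{0\}\times\mathcal E$ is a strongly integrable $(0)$-essential subbundle and $\bigcup_J J\times\mathcal F$ an invariant vertical family, and then reads the chain map $\Psi$ off as the off-diagonal block of $\partial^F$ (the cone complex), with transversality reduced to a ``transversality for pairs'' condition $W^u(x;P)\pitchfork W^s(y;\tilde P)$. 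Two payoffs of that route which your sketch does not deliver: (i) ordering critical points by action shows $\Psi$ is upper triangular with $1$'s on the diagonal, so one gets a genuine isomorphism of complexes, not merely a quasi-isomorphism as your homotopy-of-homotopies argument would; (ii) transitivity $\phi_{H_1,H_2}\circ\phi_{H_0,H_1}=\phi_{H_0,H_2}$ comes from an iterated cone on $\R^2\times\M^{1-s}$, where the prism operator and the chain-homotopy identity drop out of $\partial^{F_{01,02}}\circ\partial^{F_{01,02}}=0$ --- no gluing theorem is needed.

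The genuine gap in your proposal is precisely the point you dismiss as ``essentially formal''. The compactness and transversality results of Sections 4--5 are stated and proved for a single autonomous vector field: Theorem~\ref{thm:positiveinvariance} and \cite[Theorem 6.5]{AM:05} concern intersections of stable and unstable manifolds of an autonomous flow, and the abstract Theorem~\ref{thm:morsesmalegeneral} perturbs one fixed vector field $F$ with prescribed rest points, using crucially that the perturbations vanish near $\mathrm{sing}(F)$ and that the section $\Phi(X,u)=u'-(F+X)\circ u$ fits the Sard--Smale/$\sigma$-properness scheme built on Theorem~\ref{thm:precompactnessforsequences}. None of this applies verbatim to moduli spaces of non-autonomous trajectories $\dot u(\sigma)=P_\sigma(u(\sigma))$: you would need a non-autonomous notion of stable/unstable sets at $\sigma=\pm\infty$, a Palais--Smale condition for the family, positive invariance of $\mathcal F$ under the time-dependent flow, a parametrized Fredholm/transversality theorem for perturbations of the homotopy, and a gluing/broken-trajectory analysis to prove the chain-map and chain-homotopy identities --- an entire parallel analytic package that the paper never develops and that your sketch asserts rather than proves. (Your autonomous lift with $\dot\sigma$ constant is not gradient-like and has no hyperbolic rest points, so it does not reconnect to the existing machinery; the paper's $-\varphi'(r)\partial_r$ term is exactly the device that does.) Minor further points: with a pseudo-gradient your energy identity is only an inequality, and in Part (1) the statement asks for isomorphic complexes, which your argument as written does not give --- though it could be repaired by the same action-triangularity observation the paper uses.
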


\begin{proof}[Proof of Theorem~\ref{thm:functoriality}]
By assumption we can find constants $c_-,c_+\in \R$ such that 
$$H_0+c_- \leq H_1\leq H_0+c_+.$$
By the functorial property of Morse homology we see that there are homomorphisms of Abelian groups $\phi_{H_0+c_-,H_1}$ and $\phi_{H_1,H_0+c_+}$ such that 
$$\phi_{H_1,H_0+c_+}\circ \phi_{H_0+c_-,H_1}  = \phi_{H_0+c_-,H_0+c_+} = \text{id}.$$
In particular, $\phi_{H_0+c_-,H_1}$ is injective and $\phi_{H_1,H_0+c_+}$ is surjective, which means that the Morse homology $HM_*(T^*M,\mathcal E, \A_{H_1},P_1)$ is at least as rich as 
the Morse homology $HM_*(T^*M,\mathcal E, \A_{H_0},P_0)$. Exchanging the role of $H_0$ and $H_1$ yields the claim. 

The isomorphism between $HM_*(T^*M,\mathcal E)$ and the singular homology of $\Lambda M$ is constructed exactly as in \cite{Abbondandolo:2006jf} for the case of Floer homology. Taking advantage 
of the freedom in the choice of the Hamiltonian function, we choose $H$ of the form\footnote{As already observed, for a generic choice of the potential $U$, all critical points of $\A_H$ are hyperbolic.} 
$$H(t,q,p) = \frac 12 |p|^2 + U(t,q).$$
Such an Hamiltonian is the Fenchel dual of the Tonelli Lagrangian 
$$L:\T\times TM\to \R, \quad L(t,q,p)=\frac12 |v|^2 - U(t,q).$$
The Legendre transform $(t,q,p)\mapsto (t,q,\partial H/\partial p(t,q,p))$ establishes a one-to-one correspondence between the set of one-periodic solutions to Hamilton's equations on $T^*M$, denoted $\mathcal P(H)$, and the set of one-periodic solutions  of the Euler-Lagrange equation on $TM$, denoted $\mathcal P(L)$, which in local coordinates read 
$$\frac{\diff}{\diff t} \frac{\partial L}{\partial v}(t,q(t),\dot q(t)) = \frac{\partial L}{\partial q}(t,q(t),\dot q(t)).$$
In the latter formulation, one-periodic solutions correspond to critical points of the Lagrangian action
$$\mathcal S: H^1(\T,M)\to \R,\qquad \mathcal S(q) := \int_0^1 L(t,q(t),\dot q(t))\, \diff t.$$
Moreover, the Morse index of every one-periodic solution $q\in \mathcal P(L)$ coincides with the relative Morse index of the corresponding solution $x\in \mathcal P(H)$.
As it turns out, one can apply infinite dimensional Morse theory (in fact, the Morse complex approach) as developed by Palais \cite{Palais:1963jp} to the functional $\mathcal S$, and the
resulting Morse homology is isomorphic to the singular homology of $H^1(\T,M)$ (and hence, to the singular homology of $\Lambda M$, as the latter two spaces are homotopy equivalent). The claim follows now 
showing that there is a chain complex isomorphism between $\{CM_*(\mathcal S),\partial_*\}$ and the Morse complex $\{C_*(\A_H),\partial_*\}$ of 
the quadruple $(\A_H,\M^{1-s},P,\mathcal E)$.
This can be done adapting the argument of  \cite{Abbondandolo:2006jf} to the setting of this paper.
\end{proof}

Thus, we are left to prove Theorem~\ref{thm:functoriality2}. Even if we are dealing here with strongly indefinite functionals, the argument is very similar to the case of finite Morse indices (see for instance \cite{Abbondandolo:2006lk} and references therein). 

Thus, let $H:\T\times T^*M\to \R$ be a smooth Hamiltonian as in Theorem~\ref{thm:functoriality} and let $P,\tilde P$ be two vector fields as in Theorem~\ref{thm:transverseintersection}. 
We introduce the smooth Morse function
$$\varphi:\R\to \R, \qquad \varphi(r) =2r^3-3r^2+1,$$
which has a non-degenerate local maximum at $r=0$ with $\varphi(0)=1$ and a non-degenerate local minimum at $r=1$ with $\varphi(1)=0$.  Moreover, $\varphi'(r)$ diverges for $r\to \pm \infty$. On $\R\times \M^{1-s}$ we consider the function 
$$f: \R\times \M^{1-s}\to \R,\qquad f(r,x) := \varphi(r) + \A_H(x)$$
and the \textit{cone vector field}
$$F(r,\cdot) := \chi(r) P(\cdot) + (1-\chi(r))\tilde P(\cdot) - \varphi'(r) \frac{\partial}{\partial r},$$
where $\chi:\R\to \R$ is the characteristic function of the half-line $(-\infty,\frac 12]$. It is straightforward to check that $f$ is a non-degenerate Lyapounov function for $F$.

\begin{rmk}
The cone vector field $F$ has discontinuities at the hypersurface $\{r=1/2\}$. However, these still allow to have a well-defined Morse complex. Indeed, both vector fields 
$$\mathfrak P (r,\cdot) := P(\cdot) - \varphi'(r) \frac{\partial}{\partial r} \qquad \text{and} \qquad \mathfrak P(r,\cdot):= \tilde P(\cdot) - \varphi'(r) \frac{\partial}{\partial r}$$
are transverse to the hypersurface $\{r=1/2\}$ and point in the same direction. For $z=(r,x)\in  (-\infty,1/2)\times \M^{1-s}$ we denote by $\tau(z)>0$ the hitting time of the $\mathfrak P$-flowline 
through $z$ with the hypersurface $\{z=1/2\}$ and readily see that 
$$\Phi_F^t (z) = \left \{\begin{array}{l} \Phi_{\mathfrak P}^t (z) \qquad \qquad \qquad \ \ \, t\leq \tau(z), \\ \Phi_{\tilde{\mathfrak P}}^{t-\tau(z)} \circ \phi_{\mathfrak P}^{\tau(z)}(z) \qquad t >\tau(z).\end{array}\right .$$
From this one can easily deduce that the stable and unstable manifolds of critical points of $f$ are indeed smooth manifolds. The details are left to the reader. 
\qed
\end{rmk}

Clearly, the critical points of $f$ (equivalently, the rest points of $F$) are the points $(0,x)$ with $x\in \text{crit}\,  \A_{H}$ and $(1,y)$ with $y\in \text{crit} \, \A_{H}$.
In particular, by Proposition~\ref{lem:compactness} this implies that the action of critical points of $f$ is uniformly bounded from below. 
Also, the pair $(f,F)$ satisfies the Palais-Smale condition; indeed, if $(r_n,x_n)\subset \R\times \M^{1-s}$ is a Palais-Smale sequence for the pair $(f,F)$ then, up to extracting a subsequence, 
we either have that $r_n\to 0$ or $r_n\to 1$. Let us assume that $r_n\to 0$ (the other case being completely analogous). Without loss of generality we can suppose that $|r_n|<\frac 12$ for all $n\in \N$, so that for all $n\in\N$ we have
$$f(r_n,x_n) = \A_{H}(x_n) + \varphi(r_n),\qquad F(r_n,x_n)= P(x_n)-  \varphi'(r_n)\frac{\partial}{\partial r}.$$
This shows that $(x_n)$ is a Palais-Smale sequence for the pair $(\A_{H},P)$, thus pre-compact as the pair $(\A_{H},P)$ satisfies the Palais-Smale condition. 

If $\mathcal E\subset T\M^{1-s}$ denotes the (0)-essential subbundle defined in Section~\ref{s:essentialsubbundle},
we readily see that $\{0\}\times \mathcal E\subset T (\R\times \M^{1-s})$ is a strongly integrable (0)-essential subbundle with the property that the Hessian of $f$ at any critical point is a 
compact perturbation of $\{0\}\times \mathcal E$, and that the relative Morse indices of critical points of $f$ with respect to the (0)-essential subbundle $\{0\}\times \mathcal E$ are 
\begin{align*}
\mathrm{m}((0,x);\{0\}\times \mathcal E) &= \mathrm{m}(x;\mathcal E) + 1, \qquad \forall x \in \text{crit}\, \A_{H},\\
\mathrm{m}((1,y);\{0\}\times \mathcal E) &= \mathrm{m}(y;\mathcal E), \qquad \forall y \in \text{crit}\, \A_{H},
\end{align*}
where $\mathrm{m}(\cdot,\mathcal E)$ is the relative Morse index defined in Definition~\ref{def:morseindex}.
Finally, if $\mathcal F$ denotes the essentially vertical family defined in \eqref{def:essentiallyvertical}, we have that 
$$\bigcup_{J\subset \R} J\times \mathcal F$$
is an essentially vertical family for $\{0\}\times \mathcal E$ which is $F$-invariant (recall indeed that $\mathcal F$ is both $P$- and $\tilde P$-invariant). Therefore, Theorem 6.5 in \cite{AM:05} implies that the intersection between stable and unstable manifolds of any two critical points of $f$ is pre-compact. 

Let us now have a closer look at the stable and unstable manifolds of critical points of $f$. Denoting by $\sigma:\R\to (0,1)$ the solution of the Cauchy problem 
$$\left \{\begin{array}{l} \sigma'(t) = -\varphi'(\sigma(t)),\\ \sigma(0)=\frac 12,\end{array}\right.$$
a direct inspection to the flow of $F$ shows that for every $x,y\in \text{crit}\, \A_{H}$ we have 
\begin{align*}
W^s((0,x);F) &= \{0\} \times W^s(x;P),\\
W^u((0,x);F) &= \bigcup_{-\infty<s<1} \{r\} \times \Phi^{\tilde P}_{t_+(r)} \big (W^u(x;P)\big ),\\
W^s((1,y);F) &= \bigcup_{0<s<+\infty} \{r\} \times \Phi^{P}_{t_-(r)} \big (W^s(y;\tilde P)\big ),\\
W^u((1,y);F) &= \{1\}\times W^u(y;\tilde P),
\end{align*} 
where the functions $t_+:(-\infty,1)\to \R^+$ and $t_-:(0,+\infty)\to \R^-$ are defined implicitly by 
$$\left \{ \begin{array}{r} t_+(r) =0 \qquad \text{for}\ r\leq 1/2,\\ \sigma(t_+(r))=r \ \ \ \text{for}\ r \geq 1/2, \end{array}\right. \qquad  \qquad \left \{ \begin{array}{r} t_-(r) =0 \qquad \text{for}\ r\geq 1/2,\\ \sigma(t_-(r))=r \ \ \ \text{for}\ r \leq 1/2. \end{array}\right. $$
In particular, for every $x,x',y,y'\in \text{crit}\, \A_{H}$ we have
\begin{align*}
W^u((0,x);F) \cap W^s((0,x');F) &= \{0\} \times \big (W^u(x;P)\cap W^s(x'; P)\big ),\\
W^u((1,y);F) \cap W^s((1,y');F) &= \{1\} \times \big (W^u(y;\tilde P)\cap W^s(y';\tilde P)\big ),\\
W^u((0,x);F) \cap W^s((1,y);F) &= \bigcup_{0<r\leq 1/2} \{r\}\times \big (W^u(x;P) \cap \Phi^{P}_{t_-(r)}(W^s(y;\tilde P))\big ) \\
& \cup \bigcup_{1/2\leq r < 1} \{r\}\times \big ( \Phi^{\tilde P}_{t_+(r)}(W^u(x;P))\cap W^s(y;\tilde P)\big ) \\
W^u((1,y);F) \cap W^s((0,x);F) &= \emptyset.
\end{align*}

From this, we readily see that all but possibly the third intersection are transverse by assumption. Notice also that in case $x=y$ we have 
$$W^u((0,x);F) \cap W^s((1,x);F) = (0,1)\times \{x\}.$$ 
This can be seen also as following: We interpret $\A_H$ as a function of both $r$ and $x$ and observe that
$$\diff \A_H(r,x) [F(r,x)] = \chi(r)\cdot \diff \A_H(r,x)[P(x)] + (1-\chi(r))\cdot \diff \A_H(r,x)[\tilde P(x)] <0$$
for all $(r,x)\in \R\times \M^{1-s}$ such that $x\not \in \text{crit}\, \A_H$. Therefore, $\A_H$ is strictly decreasing along all non-constant flow lines of $F$, besides those which are up to time shifts of the form
$$t\mapsto (\sigma(t),x),\qquad x\in \text{crit}\, \A_H.$$
In particular, up to time shifts there is exactly one flow line going from $(0,x)$ to $(1,x)$ for every $x\in \text{crit}\, \A_H$, and the intersection
$$W^u((0,x);F) \cap W^s((1,x);F) = (0,1)\times \{x\}$$ 
is transverse. This can be seen roughly speaking as follows: at the hypersurface $\{r=1/2\}$ we have
$$\{1/2\}\times \big (W^u(x;P)\cap W^s(x;\tilde P)\big )$$
and the claim follows since $T_xW^u(x;P)$ belongs to the negative cone to $f$ at $x$ whereas $T_xW^s(x;\tilde P)$ belongs to the positive cone to $f$ at $x$. The details are left to the reader. 

Repeating the argument of Section~\ref{s:morsecomplex} word by word, we deduce that the Morse complex with $\Z_2$-coefficients with respect to the (0)-essential subbundle $\{0\}\times \mathcal E$ for the cone vector field $F$ is well-defined provided the vector fields $P$ and $\tilde P$ satisfy the following

\vspace{2mm}

\textbf{Transversality for pairs condition:} For every $x, y\in \text{crit}\, \A_{H}$ with $\mathrm m(x;\mathcal E)-\mathrm{m}(y;\mathcal E)\leq 1$ the intersection 
$$W^u(x;P) \cap W^s(y;\tilde P)$$

 is transverse.

\vspace{2mm}


\noindent Indeed, by the computations above, we see that stable and unstable manifolds of critical points of $f$ intersect transversally if and only if the transversality for pairs condition holds. 
Such a condition can be achieved by a generic small perturbation of $\tilde P$, as one sees adapting the arguments in Section~\ref{s:transversality}. 
It is worth observing that according to  \cite[Section 8]{AM:01} it is possible to define a Morse complex even if the transversality for pairs condition is not satisfied. However, in this case the boundary operator 
is not uniquely determined. 

 Recalling that we are using $\Z_2$-coefficients, we see that the boundary operator $\partial^F_*$ takes the form 
$$\partial^F_{*+1}:C_{*+1}(F)\cong C_*(P)\oplus C_{*+1}(\tilde P)\to  C_{*-1}(P)\oplus C_*(\tilde P)\cong C_*(F),\qquad \partial^F_{*+1} = \left (\begin{matrix} \partial^{P}_* & 0 \\ \Psi_* & \partial^{\tilde P}_{*+1}\end{matrix}\right ),$$
for some chain homomorphism $\Psi_*:C_*(P)\to C_*(\tilde P)$ (this follows from the fact that $0 = \partial_{*}^F\circ \partial_{*+1}^F$).
 
We now explicitly determine $\Psi_*$. Using the computations above about stable and unstable manifolds of critical points 
 of $f$ it is easy to check that the pair $(P,\tilde P)$ satisfies the following two additional properties: 
 
 \vspace{2mm}

\textbf{Compactness for pairs condition:} For every $x,y\in \text{crit}\, \A_{H}$ with $\mathrm m(x;\mathcal E)-\mathrm{m}(y;\mathcal E)\leq 1$
the sets 
$$W^u(x;P) \cap \Phi^{P}(\R^-\times W^s(y;\tilde P)),\qquad \Phi^{\tilde P}(\R^+\times W^u(x;P))\cap W^s(y;\tilde P)$$ 

are pre-compact. 

\vspace{2mm}

\textbf{Finiteness for pairs condition:} For every $x\in \text{crit}\, \A_{H}$ the set 
$$\big \{y\in \text{crit}\, \A_{H}\ \big |\ \mathrm{m}(y;\mathcal E) = \mathrm{m}(x;\mathcal E), \ \text{and} \ W^u(x;P)\cap W^s(y;\tilde P)\neq \emptyset\big \}$$

is finite. 

\vspace{2mm}

\noindent These facts, together with the transversality for pairs condition, allow us to construct $\Phi_*$ as follows: Define $\text{crit}_{\kappa} \A_H$ to be the set of critical points of $\A_H$ with relative Morse 
index $\kappa$. For any $x,y\in \text{crit}_\kappa \A_{H}$ consider $p\in W^u(x;P)\cap W^s(y;\tilde P)$. The tangent space of $W^u(x;P)$ at any $p$ is a compact 
perturbation of $\mathcal E(p)$ with 
$$\dim (T_pW^u(x;P),\mathcal E(p))= \mathrm{m}(x;\mathcal E) = \kappa,$$
while the pair $(T_pW^s(y;\tilde P),\mathcal E(p))$ is Fredholm with 
$$\text{ind}\, (T_pW^s(y;\tilde P),\mathcal E(p))= - \mathrm{m}(y;\mathcal E)=-\kappa.$$
It follows that $(T_pW^u(x;P), T_pW^s(y;\tilde P))$ is a Fredholm pair of index 0 (see e.g. Proposition A.2 in \cite{AM:05}). By the transversality and compactness for pairs conditions, 
the intersection $W^u(x;P)\cap W^s(y;\tilde P)$ is a compact discrete set, thus finite. We can therefore define for every $x\in \text{crit}_\kappa \A_H$ 
(observe that this is indeed a finite sum by the finiteness for pairs condition)
\begin{align*}
\Psi_\kappa x &:= \sum_{y \in \text{crit}_\kappa \A_{H}} \Big (\# W^u(x;P)\cap W^s(y;\tilde P) \ \text{modulo}\ 2\Big ) \cdot y\\
		&= \ \ \ x \ \ \ + \sum_{\scriptsize \begin{matrix} y \in \text{crit}_\kappa \A_{H},\\ \ \A_H(y)<\A_H(x)\end{matrix}} \Big (\# W^u(x;P)\cap W^s(y;\tilde P) \ \text{modulo}\ 2\Big ) \cdot y.
\end{align*}
It is straightforward to check that such a homomorphism coincides with the one appearing in the expression for $\partial_*^F$
(in other words, the Morse complex $(C_*(F),\partial^F_*)$ is the \textit{mapping cone} of the homomorphism $\Psi$; see e.g. \cite[II.4]{Maclane:1967}). 
Moreover, ordering the critical points of $\A_H$ by increasing value of the action, we see that $\Psi_\kappa$ is represented by an upper-triangular matrix with $1$ on the diagonal entries. 
Part (1) of Theorem~\ref{thm:functoriality2} now readily follows, as an homomorphism of this form must be an isomorphism. Indeed, if $x_1,x_2,...$ are the critical points of index $\kappa$ of $\A_H$ ordered by increasing 
value of $\A_H$, the inverse of $\Phi_\kappa$ is defined inductively by 
\begin{align*}
\Psi_\kappa^{-1} x_1& =x_1, \\
\Psi_\kappa^{-1}x_\ell &= x_\ell - \sum_{j=1}^{\ell -1} \Big (\# W^u(x_\ell;P)\cap W^s(x_j;\tilde P) \ \text{modulo}\ 2\Big ) \cdot \Phi_\kappa^{-1}x_j,\qquad \forall \ell\geq 2.
\end{align*}

The homomorphism $\Phi_{H_0,H_1}$ in Part (2) for Hamiltonians $H_0\leq H_1$ with corresponding negative pseudo gradient vector fields $P_0$ and $P_1$
is constructed using the same ideas used for Part (1): One defines the \textit{cone function}  
$f:\R\times \M^{1-s}\to \R$ of the Hamiltonian actions 
$\A_{H_0}\geq \A_{H_1}$ by
$$f(r,\cdot):= \chi(r) \A_{H_0}(\cdot) + (1-\chi(r)) \A_{H_1}(\cdot) + \varphi(r),$$
which is a non-degenerate Lyapounov function of the cone vector field 
$$F(r,\cdot) := \chi(r) P_0(\cdot) + (1-\chi(r) P_1(\cdot) - \varphi'(r) \frac{\partial}{\partial r}.$$
Repeating the argument above, we see that the Morse complex with $\Z_2$-coefficients of the cone vector field $F$ with respect to the (0)-essential subbundle $\{0\}\times \mathcal E$ is well-defined, 
and that the boundary operator $\partial^F_*$ is the cone of some chain homomorphism $\Psi_{H_0,H_1}$, which thus induces a homomorphism $\Phi_{H_0,H_1}$ in homology.  
The fact that $\Phi_{H_0,H_0+c}=\text{id}$ for all $c\geq 0$ follows by taking $P_0=P_1$. 
 
 Thus, we are left to show the transitivity property
 $$\Phi_{H_1,H_2}\circ \Phi_{H_0,H_1} = \Phi_{H_0,H_2}$$
 for any triplet of Hamiltonians $(H_0,H_1,H_2)$ such that $H_0\leq H_1\leq H_2$. This will be achieved by iterating the cone vector field construction. Thus, choose negative pseudo gradient vector fields 
 $P_i$ for $\A_{H_i}$, $i=0,1,2$, as in the statement of Theorem~\ref{thm:transverseintersection} and denote with 
 $F_{ij}$, $0\leq 1\leq j\leq 2$, the cone vector field of $P_i$ and $P_j$:
 $$F_{ij} (r,\cdot) = \chi(r) P_i(\cdot) + (1-\chi(r))P_j(\cdot) + \varphi(r).$$
 Up to a generic small perturbation of $P_1$ and $P_2$ we can suppose that the Morse complex $(C_*(F_{ij}), \partial_*^{F_{ij}})$ is well-defined. Equivalently, this means that each pair $(P_i,P_j)$ satisfies 
 the transversality, compactness, and finiteness for pairs conditions. All we have to show is that  (recall that we are using $\Z_2$-coefficients)
 $$\Psi_{H_1,H_2}\circ \Psi_{H_0,H_1}+ \Psi_{H_0,H_2} = \partial^{P_2}\circ \mathcal P + \mathcal P\circ \partial^{P_0}$$
 for some homomorphism $\mathcal P_*:C_*(P_0)\to C_{*+1}(P_2)$ which is usually called the \textit{prisma operator}.
 
 \begin{rmk}
For arbitrary vector fields $F_0,F_1,F_2$ such that each pair $(F_i,F_j)$ satisfies the transversality, compactness, and finiteness for pairs conditions, the chain map $\Psi_{F_0,F_2}:C_*(F_0)\to C_*(F_2)$ 
need not be chain homotopic to the composition $\Psi_{F_1,F_2}\circ \Psi_{F_0,F_1}$. For instance, take $F_0=F_2$ and $F_1$ to be a vector field with no rest points\footnote{Every infinite dimensional Hilbert manifold 
admits a vector field with no rest points since its tangent bundle is trivial.}. In this case we have $\Psi_{F_0,F_2}=\text{id}$, whereas $\Psi_{F_1,F_2}= \Psi_{F_0,F_1}=0$, so that $\Psi_{F_0,F_2}$ is not chain 
homotopic to the composition $\Psi_{F_1,F_2}\circ \Psi_{F_0,F_1}$ unless the Morse complex of $F_0=F_2$ is contractible. 
 \qed
\end{rmk}

On $\R^2\times \M^{1-s}$ consider the cone vector field of $F_{01}$ and $F_{02}$ 
$$F_{01,02}(r,l,\cdot) = \chi(r) F_{01}(l,\cdot) + (1-\chi(r))F_{02}(l,\cdot) - \varphi'(r)\frac{\partial}{\partial r}.$$
It is immediate to check that the cone function 
\begin{align*}
f_{01,02}(r,l,\cdot) 
			&= \chi(s) \A_{H_0}(\cdot) + \chi(r)(1-\chi(l))\A_{H_1} (\cdot)+(1-\chi(r))(1-\chi(l))\A_{H_2}(\cdot) +  \varphi(l) +  \varphi(r)
			\end{align*}
is a non-degenerate Lyapounov function for $F_{01,02}$ and that its critical points (equivalently, the
rest points of $F_{01,02}$) are of the form 
\begin{align*}
 (0,0,x),\ (1,0,x), \qquad & x \in \text{crit}\, \A_{H_0},\\
 (0,1,y), \qquad & y\in \text{crit}\, \A_{H_1},\\
 (1,1,z), \qquad & z\in \text{crit}\, \A_{H_2}.
\end{align*}
In particular, the action of critical points of $f_{01,02}$ is uniformly bounded from below. Also, as one sees adapting the argument used for Part (1), 
the pair $(f_{01,02},F_{01,02})$ satisfies the Palais-Smale condition, the Hessian of $f_{01,02}$ at every critical point is 
a compact perturbation of the strongly integrable (0)-essential subbundle $\{(0,0)\}\times \mathcal E\subset T(\R^2\times \M^{1-s})$ and the relative Morse indices read
\begin{align*}
\mathrm{m}((0,0,x);\{(0,0)\}\times \mathcal E) &= \mathrm{m}(x;\mathcal E) + 2, \qquad \forall x \in \text{crit}\, \A_{H_0},\\
\mathrm{m}((1,0,x);\{(0,0)\}\times \mathcal E) &= \mathrm{m}(x;\mathcal E)+1, \qquad \forall x \in \text{crit}\, \A_{H_0},\\
\mathrm{m}((0,1,y);\{(0,0)\}\times \mathcal E) &= \mathrm{m}(x;\mathcal E)+1, \qquad \forall y \in \text{crit}\, \A_{H_1},\\
\mathrm{m}((1,1,z);\{(0,0)\}\times \mathcal E) &= \mathrm{m}(z;\mathcal E), \qquad \forall z \in \text{crit}\, \A_{H_2}.
\end{align*}
Finally, 
$$\bigcup_{A\subset \R^2} A\times \mathcal F$$
is an essentially vertical family for $\{(0,0)\}\times \mathcal E$ which is $F_{01,02}$-invariant. This implies in virtue of Theorem 6.5 in \cite{AM:05} that the intersection between stable and unstable manifolds 
of any two critical point of $f_{01,02}$ is pre-compact. Transversality can also be achieved by a small perturbation, so that the Morse complex 
$(C_*(F_{01,02}),\partial_*^{F_{01,02}})$ with $\Z_2$-coefficients with respect to the (0)-essential subbundle $\{(0,0)\}\times \mathcal E$ is well-defined. 

Writing 
$$C_{*+1}(F_{01,02}) \cong C_{*-1}(P_0) \oplus C_{*}(P_0)\oplus C_{*}(P_1) \oplus C_{*+1}(P_2)$$
it is easy to check that $\partial_*^{F_{01,02}}$ takes the form 
$$\partial_{*+1}^{F_{01,02}} = \left (\begin{matrix} \partial_{*-1}^{P_0} & 0 & 0 & 0 \\ * & \partial_{*}^{P_0} & * & 0 \\ (\Psi_{H_0,H_1})_{*-1} & 0 & \partial_{*}^{P_1} & 0 \\ * & (\Psi_{H_0,H_2})_{*} & * & \partial_{*+1}^{P_2} \end{matrix}\right ).$$
 To determine the missing terms we observe that 
\begin{align*}
F_{01,02}(r,l,\cdot) &= \chi(l) P_0(\cdot) + \chi(r)(1-\chi(l))P_1(\cdot)+(1-\chi(r))(1-\chi(l))P_2(\cdot) -  \varphi'(l)\frac{\partial}{\partial l}-  \varphi'(r)\frac{\partial}{\partial r}\\
				&= \chi(l) \Big (P_0(\cdot) - \varphi'(r)\frac{\partial}{\partial r}\Big ) + (1-\chi(l)) \Big (\chi(r)P_1(\cdot)+(1-\chi(r))P_2(\cdot)- \varphi'(r)\frac{\partial}{\partial r}\Big ) -  \varphi'(l)\frac{\partial}{\partial l}\\
				&= \chi(l) F_{00}(r,\cdot) + (1-\chi(l))F_{12}(r,\cdot) -  \varphi'(l)\frac{\partial}{\partial l}\\
				&=F_{00,12}(l,r,\cdot)
\end{align*}
and similarly $f_{01,02}(r,l,\cdot)=f_{00,12}(l,r,\cdot)$. In particular, the boundary operator $\partial_*^{F_{01,02}}$ can be rewritten as 
$$\partial_{*+1}^{F_{01,02}} = \partial_{*+1}^{F_{00,12}} = \left (\begin{matrix} \partial_{*-1}^{P_0} & 0 & 0 & 0 \\ \text{id}_{C_{*-1}(P_0)} & \partial_{*}^{P_0} & 0 & 0 \\ * & * & \partial_{*}^{P_1} & 0 \\ * & * & (\Psi_{H_1,H_2})_{*} & \partial_{*+1}^{P_2} \end{matrix}\right ).$$
Comparing the two expressions for $\partial_*^{F_{01,02}}$ yields 
$$\partial_{*+1}^{F_{01,02}} = \left (\begin{matrix} \partial_{*-1}^{P_0} & 0 & 0 & 0 \\  \text{id}_{C_{*-1}(P_0)} & \partial_{*}^{P_0} & 0 & 0 \\ (\Psi_{H_0,H_1})_{*-1} & 0 & \partial_{*}^{P_1} & 0 \\ \mathcal P_{*-1} & (\Psi_{H_0,H_2})_* & (\Psi_{H_1,H_2})_* & \partial_{*+1}^{P_2} \end{matrix}\right )$$
for some homomorphism $\mathcal P_{*-1}:C_{*-1}(P_0)\to C_*(P_2)$. The identity $0= \partial_{*+1}^{F_{01,02}}\circ \partial_{*+2}^{F_{01,02}}$ implies now that 
$$(\Psi_{H_1,H_2})_*\circ (\Psi_{H_0,H_1})_*+ (\Psi_{H_0,H_2})_* = \partial^{P_2}_{*+1} \circ \mathcal P_* + \mathcal P_{*-1} \circ \partial^{P_0}_*$$
as claimed.


\appendix

\section{Flows which preserve an essentially vertical family}
\label{s:essentialinvariance}

In this appendix we prove some extra properties about flows which preserve an essentially vertical family. Thus, let $\M$ be a Hilbert manifold modeled 
on the real separable Hilbert space $\HH$, and let $\mathcal E$ be a strongly integrable essential sub-bundle of $T\M$ with strong integrable structure given by 
an atlas $\mathcal A$ which carries an essentially vertical family $\mathcal F$. For a $C^1$-vector field $F$ on $M$ we denote by 
$$\phi^F : \Omega (F) \subset \R\times \M\to \M$$
its local flow. Our first claim is that, if $F$ is complete and $\mathcal F$ is $F$-positively invariant, meaning that for every $A\in \mathcal F$ the set 
$\phi^F([-T,T]\times A) \in \mathcal F$ for every $T\geq 0$, then $\mathcal E$ is \textit{invariant} with respect to $F$. This means that, denoting with $\mathcal E(p)$ 
a local representative of $\mathcal E$ at $p$ for every $p\in \M$, the following holds: 
\begin{equation}
D\phi_t^F (p) \mathcal E(p) \ \text{is a compact perturbation of } \mathcal E(\phi_t^F(p)), \quad \forall t\in \R, \ \forall p\in\M.
\label{eq:invariance1}
\end{equation}
We shall notice that this notion does not depend on the choice of the local representative at $p$. 
Indeed, by \cite[Page 341]{AM:05}, $\mathcal E$ is invariant with respect to $F$ at $p\in \M$ if and only if 
\begin{equation}
(L_F \mathcal P)(p) \mathcal P(p)\in \mathcal L(T_p\M) \ \text{is a compact endomorphism},
\label{eq:invariance2}
\end{equation}
where $L_F$ denotes the Lie-derivative along $F$ and $\mathcal P$ denotes a projector onto a local representative of $\mathcal E$ in a neighborhood $\U$ of $p$ (more precisely, $\mathcal P$ is a section of the 
Banach bundle of linear endomorphisms of $T\U$ such that for every $p\in \U$, $\mathcal P(p)$ is a projector onto $\mathcal E(p)$). It is now straightforward to check 
that \eqref{eq:invariance2} is independent of the choice of the projector as well as of the choice of the local representative of $\mathcal E$.

\begin{prop}
\label{prop:essentialinvariance}
Let $F$ be a complete vector field on $\M$ for which the essentially vertical family $\mathcal F$ is positively invariant. Then, the strongly integrable essential sub-bundle $\mathcal E$ 
is positively invariant under $F$. 
\end{prop}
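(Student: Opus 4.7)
The plan is to work entirely inside a pair of charts from the strong integrable structure. Fix $p\in\M$ and $t_0\in\R$ and set $q:=\phi^F_{t_0}(p)$; I pick charts $(\varphi_\alpha,\U_\alpha)\ni p$ and $(\varphi_\beta,\U_\beta)\ni q$ from the strong integrable structure $\mathcal A$ of $\mathcal E$ with $\varphi_\alpha(p)=0=\varphi_\beta(q)$, so that in both charts $\mathcal E$ is represented by the same constant model subspace $V\subset\HH$. Let $\tau:=\varphi_\beta\circ\phi^F_{t_0}\circ\varphi_\alpha^{-1}$, a $C^1$-local diffeomorphism with $\tau(0)=0$, and let $Q$ be a projector with kernel $V$. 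Writing $B_V$ for the closed unit ball of $V$, the goal is to show that $QD\tau(0)B_V$ is pre-compact in $\HH$: once this is established, since $D\tau(0)$ is a topological automorphism of $\HH$ the unit ball of $D\tau(0)V$ is contained in $C\cdot D\tau(0)B_V$ for some $C>0$, so $D\tau(0)V$ is a compact perturbation of $V$, which via the charts translates exactly to $D\phi^F_{t_0}(p)\mathcal E(p)$ being a compact perturbation of $\mathcal E(q)$.

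The first step is to cook up enough essentially vertical sets near $p$ and push them forward. For $\epsilon>0$ sufficiently small, set $A_\epsilon:=\varphi_\alpha^{-1}(\epsilon B_V)\subset\U_\alpha$; then $Q\varphi_\alpha(A_\epsilon)=Q(\epsilon B_V)=\{0\}$ is trivially pre-compact, so by property (ii) of Definition~\ref{dfn:evf} one has $A_\epsilon\in\mathcal F$. Strong integrability, via \eqref{eq:integrable2}, ensures that the characterisation (ii) holds in every chart of $\mathcal A$ (not only in the single chart a priori selected at each point), so the argument is legitimate at both $p$ and $q$. Positive $F$-invariance of $\mathcal F$ then gives $\phi^F_{t_0}(A_\epsilon)\in\mathcal F$, and for $\epsilon$ small enough this set sits inside $\U_\beta$; reapplying (ii) at $q$ shows that $Q\tau(\epsilon B_V)=Q\varphi_\beta(\phi^F_{t_0}(A_\epsilon))$ is pre-compact in $\HH$.

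I expect the main obstacle to lie in the final linearisation step, since $\mathcal F$-membership is a nonlinear condition on subsets of $\M$ and must be converted into a property of the linear operator $D\tau(0)|_V$. The $C^1$ Taylor expansion gives $\tau(\epsilon v)=\epsilon D\tau(0)v + r_\epsilon(v)$ uniformly on $B_V$ with $\delta(\epsilon):=\sup_{v\in B_V}\|r_\epsilon(v)\|/\epsilon\to 0$ as $\epsilon\to 0^+$, so that
$$QD\tau(0)v \;=\; \tfrac{1}{\epsilon}\,Q\tau(\epsilon v)\;-\;\tfrac{1}{\epsilon}\,Qr_\epsilon(v), \qquad v\in B_V,$$
with the last term uniformly bounded by $\|Q\|\,\delta(\epsilon)$. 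Hence $QD\tau(0)B_V$ sits in a $\|Q\|\,\delta(\epsilon)$-neighbourhood of the pre-compact set $\frac{1}{\epsilon}Q\tau(\epsilon B_V)$ for every small $\epsilon>0$; letting $\epsilon\to 0$, $QD\tau(0)B_V$ is totally bounded in $\HH$, hence pre-compact. An equivalent route is to pull $\frac{1}{\epsilon}\tau(\epsilon B_V)$ back to $\M$ and invoke the Hausdorff-stability property (iii) of Definition~\ref{dfn:evf} at $q$ to reach the same conclusion.
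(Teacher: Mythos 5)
Your chart bookkeeping and the linearisation are fine: $A_\epsilon\in\mathcal F$, positive invariance, and property (ii) of Definition~\ref{dfn:evf} (transported between charts of $\mathcal A$ via \eqref{eq:integrable2}) do give pre-compactness of $Q\tau(\epsilon B_V)$, and the Taylor argument correctly yields that $Q D\tau(0)|_V$ is a compact operator. The gap is the very last inference. Compactness of $Q D\tau(0)|_V$ is equivalent to compactness of $P_{V^\perp}|_W$, where $W:=D\tau(0)V$, and this is strictly weaker than $W$ being a compact perturbation of $V$: by the paper's definition the latter means that $P_W-P_V$ is compact, equivalently that \emph{both} $(I-P_V)P_W$ \emph{and} $(I-P_W)P_V$ are compact, and your argument only produces the first of these. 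The fact that $D\tau(0)$ is an automorphism does not save the implication: if $U\in GL(\HH)$ maps $V$ onto a closed subspace $W_0\subset V$ of infinite codimension inside $V$ (take e.g. a unitary sending $V$ onto $W_0$ and $V^\perp$ onto $W_0^\perp$), then $QU|_V=0$ is compact, while $P_{W_0}-P_V=-P_{V\ominus W_0}$ has infinite rank. So as written the proof does not establish \eqref{eq:invariance1}.

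The missing half is recoverable within your scheme, because the paper's notion of positive invariance asks $\phi^F([-T,T]\times A)\in\mathcal F$, i.e. it also covers negative times: running the identical argument for $\tau^{-1}=\varphi_\alpha\circ\phi^F_{-t_0}\circ\varphi_\beta^{-1}$ shows that $P_{V^\perp}D\tau(0)^{-1}|_V$ is compact as well, and then a short linear lemma closes the argument: for $v$ in the unit ball of $V$ write $v=T\,P_VT^{-1}v+T\,P_{V^\perp}T^{-1}v$ with $T=D\tau(0)$; the first summand lies in $W$, the second ranges in $T(K)$ for a fixed compact set $K$, hence $(I-P_W)P_V$ is compact, which together with $(I-P_V)P_W$ compact gives $P_W-P_V$ compact. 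For comparison, the paper sidesteps this issue by working infinitesimally: it shows with the same ``differentiate a map with pre-compact image'' device that $(I-P)D\phi^F_t(0)P$ is compact for small $t$, differentiates in $t$ at $t=0$ to obtain compactness of $(L_FP)(p)P(p)$, and then invokes the equivalence \eqref{eq:invariance2}$\Leftrightarrow$\eqref{eq:invariance1} from \cite{AM:05}, so the two-sidedness is absorbed into that cited abstract equivalence rather than argued directly at time $t_0$ as you attempt.
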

\begin{proof}
Being Condition \eqref{eq:invariance2} local, we can work in a neighborhood $\mathcal U$ of a fixed point $p\in \M$. Using a chart $\varphi$ in $\mathcal A$ 
we can further identify such a neighborhood with an open subset $U$ of $\HH$ in such a way that $p$ correspond to $0$, and assume that $\mathcal E$ is represented by the
constant sub-bundle corresponding to a closed linear subspace $V\subset \HH$. Let $P:\HH\to \HH$ 
be a projector onto $V$. By the very definition of $\mathcal F$, we see that essentially vertical sets which are contained in $\frac 12 \mathcal U:= \varphi^{-1}(\frac 12 U)$ correspond to subsets $A\subset \frac 12 U$ such that $(I-P)A$ is pre-compact. Clearly, the set $V\cap \frac 12 U$ belongs to $\mathcal F$. Then, the fact that 
$\mathcal F$ is $F$-positively invariant implies that, for $T>0$ small enough (namely, such that $\Phi^F([-T,T] \times  \frac 12 U)\subset U$), the map
$$\frac 12 U \to \HH,\quad x\mapsto (I-P) \Phi^F_t (P x),$$
has pre-compact image for every $t\in [-T,T]$. Differentiating at $x=0$ yields that the linear operator 
$$(I-P)D\phi^F_t(0) P$$
is compact for every $t\in [-T,T]$. Therefore, the operator
$$(L_FP)(0) P = [DF(0),P]P =(I-P)DF(0)P = (I-P) \frac{\diff}{\diff t} \Big |_{t=0} D\phi^F_t(0) P = \frac{\diff}{\diff t}\Big |_{t=0} (I-P)D\phi^F_t(0)P$$
is compact.
\end{proof}

\bibliography{_biblio}
\bibliographystyle{plain}

\end{document}